\documentclass[10pt,a4paper]{article}
\usepackage[utf8]{inputenc}
\usepackage{amsmath, amsthm}
\usepackage{amsfonts}
\usepackage{amssymb}
\usepackage{bbm}
\usepackage{hyperref}
	
\newcommand{\snabla}{\slashed{\nabla}_{\mathbb{S}^2}}

\usepackage{epigraph}
\newcommand{\sDelta}{\slashed{\Delta}_{\mathbb{S}^2}}
\newcommand{\psiR}{|\psi_0|^2(u,v_R(u))}
\newcommand{\dpsiR}{|\rd_v\psi_0|^2(u,v_R(u))}

\newcommand{\Thetazero}{\Theta_0}
\newcommand{\Cuone}{C_{u_1}}
\newcommand{\boundary}{C_R\int_{u_1}^{u_2}\sum_{|\alpha|\leq 1}|\rd_{\alpha}\phi|^2(u',v_{R}(u')) du'}
\newcommand{\boundaryzero}{C_R\int_{u_1}^{u_2}\sum_{|\alpha|\leq 1}|\rd_{\alpha}\phi_0|^2(u',v_{R}(u')) du'}
\newcommand{\boundarytwo}{C_R\int_{u_1}^{u_2}\sum_{|\alpha|\leq 1,\ |\beta|\leq 1}|\rd_{\alpha}\snabla^{\beta}\phi|^2(u',v_{R}(u')) du'}
\newcommand{\cw}{\check{w}}
\newcommand{\cq}{\check{q}}
\newcommand{\cW}{\check{W}}
\newcommand{\cQ}{\check{Q}}
\newcommand{\tw}{\tilde{w}}
\newcommand{\tq}{\tilde{q}}

\newcommand{\N}{\Sigma}
\newcommand{\D}{\mathcal{D}_R}
\newcommand{\Cutwo}{C_{u_2}}

\newcommand{\DD}{\mathcal{D}_R}

\newcommand{\phione}{\phi_{\geq 1}}
\newcommand{\psione}{\psi_{\geq 1}}

\newcommand{\ep}{\epsilon}

\usepackage{slashed}
\usepackage[left=2cm,right=2cm,top=2cm,bottom=2cm]{geometry}

\usepackage{authblk}
\usepackage{enumerate}

\usepackage{enumerate}

\theoremstyle{plain}
\newtheorem{thm}{Theorem}[section]

\newtheorem{lemma}[thm]{Lemma}

\newtheorem{prop}[thm]{Proposition}

\newtheorem{cor}[thm]{Corollary}

\theoremstyle{remark}

\theoremstyle{definition}

\newcommand{\RR}{\mathbb{R}}

\newcommand{\rd}{\partial}
\newcommand{\ls}{\lesssim}

\theoremstyle{plain}

\theoremstyle{remark}

\theoremstyle{definition}

\usepackage{hyperref, todonotes, mathtools}	

\mathtoolsset{showonlyrefs}

\usepackage{bm}

\usepackage{color}

\usepackage{float}

\addtocounter{tocdepth}{-2}
\usepackage{graphicx}
\usepackage{authblk}
 
\numberwithin{equation}{section}

\setcounter{tocdepth}{4}
%\title{Continuous extendibility of dynamical black holes and boundedness of the matter fieldLs}
%\title{A twist on the $r^p$ method and applications to decay for a class of linear wave equations with  scale-critical generalized potentials}
%\title{Quantitative construction of two-ended black holes\\ with a Cauchy horizon and a spacelike singularity}

\title{An extension of the $r^p$ method for wave equations\\ with  scale-critical  potentials and first-order terms}

\author[1]{Maxime~Van~de~Moortel\thanks{maxime.vandemoortel@rutgers.edu}}
\affil[1]{\small  Department of Mathematics, Rutgers University, 
	Hill~Center,~New~Brunswick~NJ~08854,~United~States~of~America \vskip.1pc \  }

\date{\today}

\begin{document}
	\maketitle
	\thispagestyle{empty}
	%\newpage
	%\tableofcontents
	\thispagestyle{empty}
	%\newpage
\begin{center}{\it\large Dedicated to Professor Avraham Soffer, with  friendship and admiration}
\end{center}
		\begin{abstract}  
			
				The $r^p$ method, first introduced in \cite{rp}, has become a robust strategy to prove decay for wave equations in the context of black holes and beyond. In this  note, we propose an extension of this method, which is particularly suitable for proving decay for a general class of wave equations featuring a scale-critical time-dependent potential and/or first-order terms of small amplitude. 
	Our approach consists of absorbing  error terms in the $r^p$-weighted energy using a novel Grönwall argument, which allows a larger range of $p$ than the standard method.
	A spherically symmetric version of our strategy first appeared in \cite{Moi2} in the context of a weakly charged scalar field on a black hole  whose equations also involve a scale-critical potential.
	\end{abstract}

	\section{Introduction} The $r^p$ method of Dafermos--Rodnianski \cite{rp} is a versatile tool  to prove decay in time for solutions $\phi$ to waves equations of the form $\Box_g \phi=0$,  which is sufficiently robust for applications to nonlinear wave equations (see, e.g., \cite{DHRTquasi,DHRTquasi2,FedericoBornInfeld,YangYu}). The method is carried out  in physical space
and	relies on the radiative structure of wave equations on an asymptotically flat spacetime $g$, for which  $r\phi$ (usually) admits a finite limit --  the Friedlander radiation field  $\psi_{\mathcal{I}}(u,\omega)$ -- defined,  for $u\in \RR$, $\omega \in \mathbb{S}^2$ as \begin{equation}
\psi_{\mathcal{I}}(u,\omega):= \lim_{v\rightarrow +\infty} r\phi(u,v,\omega),
\end{equation} 
 where $u$ and $v$ are respectively retarded-time and advanced-time coordinates, corresponding to $u=t-r$ and $v= t+r$ on the usual Minkowski spacetime. In its simplest expression, the key idea of the $r^p$ method is to exploit the boundedness of a $r^p$ weighted energy of the following form, for $0 \leq p \leq 2$: \begin{equation}
\sup_{u}  \sum_{|\beta| \leq 2}\int_{v=v_0}^{+\infty} \int_{\mathbb{S}^2} r^{p} |\snabla^{\beta}\rd_v (r\phi)|^2(u,v,\omega)  dv d\omega,
	\end{equation}  to obtain pointwise decay in  time of $\phi$ at the rate $u^{-\frac{p}{2}}$ as $u\rightarrow +\infty$, under the additional conditions that \begin{enumerate}[I.]
	\item\label{condition1} Energy boundedness (in the style of \eqref{boundedness} below) holds.
	\item \label{condition2} An integrated local decay estimate (in the style of \eqref{ILED}), also known as  a Morawetz estimate,  is valid.
	\end{enumerate}
	% see Section~\ref{section.rp} for more details.

In this paper, our goal is to apply the $r^p$ method and obtain  decay in time estimates  for the following class of linear wave equations  with scale-critical potential and/or scale-critical first-order terms of small amplitude.	\begin{equation}\label{wave.main}\begin{split}
	&			\Box_g \phi = \frac{1}{r^2(u,v)}  \left( \sum_{i=0}^{1}[\ep w_i(u,v)+ W_i(u,v) ]\cdot  \rd_u^{i}\phi+  [ \ep q(u,v)+ Q(u,v) ]\cdot r \rd_v \phi \right),\\ & 	g= -\Omega^2(u,v) du dv + r^2(u,v) d\sigma_{\mathbb{S}^2}, \end{split}\end{equation}
 where $\ep \in \RR$ is a small constant, and $g$ is a spherically-symmetric and asymptotically flat\footnote{As we will discuss in Section~\ref{section.example}, the usual Minkowski metric $m=-dt^2+ dx^2+dy^2+dz^2$, associated to  $\Box_m  =- \rd_{t}^2+\rd_{x}^2+\rd_{y}^2+\rd_{ z}^2$, satisfies \eqref{H0}, together with many other usual spacetime metrics $g$, such as the Schwarzschild spacetime.} Lorentzian metric in the mild sense that  \begin{equation}\begin{split}\label{H0}
&|1+\rd_u r(u,v)|,\ |1+\rd_v r(u,v)|,\  |\Omega^2(u,v)-4|,\ r|\rd_v\Omega^2|(u,v) \lesssim r^{-1}(u,v) \text{ as } v \rightarrow +\infty,\\ & |\rd_{v}^2 r|(u,v),\ |\rd_{u} \rd_v  r|(u,v),\ |\rd_{u} \rd_{v}^2 r|(u,v)   \lesssim r^{-2}(u,v) \text{ as } v \rightarrow +\infty.\end{split}
 \end{equation}
  Finally, the potentials (terms involving $w_0(u,v)$ and $W_0(u,v)$) and first-order terms (terms involving $w_1(u,v)$, $W_1(u,v)$, $q(u,v)$ and $Q(u,v)$) are scale-critical in the sense that
	\begin{equation}\begin{split}\label{H1}
&  |w_i|(u,v),\ |q|(u,v),\ r|\rd_v w_0|(u,v),\ r^2|\rd_v q|(u,v)    \lesssim 1,\\ & |W_i|(u,v),\ |Q|(u,v),\ r|\rd_v W_0|(u,v),\ r^2|\rd_v W_1|(u,v),\ r^2 |\rd_v Q|(u,v)   = o(1) \text{ as }  v \rightarrow +\infty.
		\end{split}
	\end{equation}
	
	Note that the inverse-square power $r^{-2}$ in the potential of \eqref{wave.main} is called scale-critical because on flat spacetime (i.e., if $g=-dt^2+dx^2+dy^2+dz^2$ the Minkowski metric) and if $w_1=W_1=0$, while $w_0$, $W_0$, $q$, $Q$  are constants, then the rescaling $u \rightarrow \lambda u$,  $v \rightarrow \lambda v$ leaves \eqref{wave.main} unchanged.
The class of potentials satisfying \eqref{H1} are allowed to oscillate in $u$ and mildly oscillate in $v$, such as the following example (assuming \eqref{H0} holds)  \begin{equation}
		w_0(u,v) = \sin(u + \log(r(u,v))),\ w_1(u,v) = q(u,v)=W_0(u,v)= W_1(u,v)= Q(u,v)=0.
	\end{equation}
In particular, we allow for a large class of \emph{time-dependent potentials},	see  Section~\ref{section.example} for more general examples.

In our main theorem below, we take $\ep$ to be sufficiently small, and we conditionally assume that the energy boundedness (condition~\ref{condition1}) and an integrated local decay estimate   (condition~\ref{condition2}) are satisfied, in the traditional spirit of the $r^p$ method \cite{rp}.  We then deduce time-decay of the energy (\eqref{energy.est}) on a foliation  that reaches null infinity (see Figure~\ref{fig}) and point-wise decay (\eqref{pointwise.est}, \eqref{pointwise.est2}) at rates that are arbitrarily close to the optimal ones as $\ep \rightarrow 0$ (see Section~\ref{section.sharp} for further discussions on the sharpness of our estimates).
	
	\begin{thm}\label{main.thm}  Let $\phi$ be a solution of \eqref{wave.main} where $g$ and the potential terms satisfy \eqref{H0} and \eqref{H1} with (characteristic) initial data on the bifurcate null cones $([u_0,u_F] \times \mathbb{S}^2 )\cup( [v_0,+\infty)\times \mathbb{S}^2)$ with $u_0 \in \RR$, $u_F \in \RR \cup \{+\infty\}$, $v_0 \in \RR$ as depicted in Figure~\ref{fig}, satisfying the following assumptions for all $0 \leq p < 3$ \begin{equation}\label{data1}
\sum_{|\beta|\leq 2}\int_{v_0}^{+\infty} \int_{\mathbb{S}^2} r^{p+|\beta|}(u_0,v) |\rd_v (r\snabla^{\beta}\phi)|^2(u_0,v,\omega) dv d\omega,\ \sum_{k=0}^{1}\int_{v_0}^{+\infty}\int_{\mathbb{S}^2} r^{p+2k}(u_0,v)  |\rd_v^{1+k} (r\phi)|^2(u_0,v,\omega) dv d\omega<\infty.
		\end{equation}
Defining $E[\phi](u)$ to be the standard (unweighted) energy  on a foliation radiating to infinity of Figure~\ref{fig}, defined in Section~\ref{prelim.section}, we  assume that for all  solutions $\phi$ as above, the boundedness of the energy $E[\phi](u)$, in the sense that there exists a constant $D>0$ such that all $u_1<u_2$  \begin{equation}\label{boundedness}E[\phi](u_2) \leq D  E[\phi](u_1).\end{equation}	
		Additionally, we make the following assumption of integrated local decay: for some $\sigma>1$, there exists a constant $C_{\sigma}>0$ such that for all $u_1 \geq u_0$, and defining the spacetime region $\mathcal{D}_{u_1,\infty}=\{ u_1\leq u,\ v\geq v_0 \}$  \begin{equation}\label{ILED}
	\int_{\mathcal{D}_{u_1,\infty}}  \frac{|\phi|^2(u,v,\omega)+r^2|\rd_u \phi|^2(u,v,\omega)+
		r^2|\rd_v \phi|^2(u,v,\omega)}{(1+r)^{\sigma}} du dv d\omega  \leq C_{\sigma} E[\phi](u_1),
\end{equation}

  Then, there exists $\ep_0>0$ such that if $|\ep|<\ep_0$, there exists $\eta(\ep)>0$ such that $\eta(\ep) \rightarrow 0 $ as $\ep \rightarrow 0$ and \begin{equation}\label{energy.est}
E[\phi](u)  \lesssim u^{-3+\eta(\ep)}.
\end{equation} The following limit exists $\underset{v\rightarrow+\infty}{\lim} r\phi(u,v,\omega)=\psi_{\mathcal{I}}(u,\omega)$, and the following estimate holds in the region $v-u\geq R$, assuming $R$ is sufficiently large \begin{equation}\label{pointwise.est}
r|\phi|(u,v,\omega),\  |\psi_{\mathcal{I}}|(u,\omega) \lesssim u^{-1+\frac{\eta(\ep)}{2}}.
\end{equation} Moreover,  if  analogues of the energy boundedness \eqref{boundedness} and integrated local  decay  \eqref{ILED}  hold for $T\phi$,  where $T= \rd_u + \rd_v $, in the sense that there exists $\sigma'>1$ and  $\eta'(\ep)>0$ such that $\eta'(\ep) \rightarrow 0 $  as $\ep \rightarrow 0$ such that   for all $u_1<u_2$
\begin{equation}\label{boundedness.T}
E[T\phi](u_2) \leq D' \left( E[T\phi](u_1)	+u_1^{-2+\eta'(\ep)} E[\phi](u_1) \right),
\end{equation}\begin{equation}\label{ILED.T}
\int_{\mathcal{D}_{u_1,\infty}}  \frac{|T\phi|^2(u,v,\omega)}{(1+r)^{\sigma'}} du dv d\omega  \leq C_{\sigma'} \left(E[T\phi](u_1)+u_1^{-2+\eta'(\ep)} E[\phi](u_1) \right),
	\end{equation} for  $C_{\sigma'}>0$, $D'>0$ independent of $\ep$, and additionally the following assumption  is satisfied \begin{equation}\label{H3}
 |\rd_uw_i|(u,v),\ |\rd_uq|(u,v)\ls r^{-1}(u,v)  ,\ r|\rd_u W_i|(u,v),\ r|\rd_u Q|(u,v) =o(1) \text{ as } v\rightarrow+\infty.
	\end{equation}
	 Then, we also have % improved decay of the $T$-commuted energy\begin{equation}\label{energy.est.T}	E[T\phi](u)  \lesssim u^{-5+\eta(\ep)},\end{equation}  as well as
	 the following (sharper) pointwise decay estimate: for all $v\geq u+R$, $\omega \in \mathbb{S}^2$:
\begin{equation}\label{pointwise.est2}
	|\phi|(u,v,\omega) \lesssim u^{-2+\frac{\eta(\ep)}{2}}.
\end{equation}%\begin{equation}\label{pointwise.est3|\rd_u\psi_{\mathcal{I}}|(u,\omega) \lesssim u^{-2+\frac{\eta(\ep)}{2}},\end{equation} 
%where $R_{min}\geq0$ is defined as \begin{equation}R_{min} =  \inf\{ R>0,\ \text{  for all } v \geq v_0,\ \text{  there exists }  u_0\leq u_{R}(v)<u_F,\ r(u_R(v),v) = R \},\end{equation} where the implicit constant in \eqref{pointwise.est2} degenerate as $R_0$ is close to $0$.
	\end{thm}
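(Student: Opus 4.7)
The plan is to run the extended $r^p$ method of Dafermos--Rodnianski on the Friedlander-renormalized field $\psi := r\phi$. From \eqref{wave.main} together with the asymptotic geometry \eqref{H0}, $\psi$ satisfies a wave equation of the schematic form $\rd_u\rd_v\psi - r^{-2}\sDelta\psi = r^{-2}\mathcal{L}[\phi]$, where $\mathcal{L}[\phi]$ collects the four scale-critical source contributions from \eqref{wave.main} together with a geometric remainder that is $o(1)$ relative to the main operator as $r\to\infty$, by \eqref{H0}. The multiplier $r^p\rd_v\psi$, coupled with commutations by $\rd_v$ (to reach $p$ close to $3$) and by $\snabla^{\beta}$ for $|\beta|\leq 2$ (needed for the pointwise statements), will drive the hierarchy in the standard way.

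The heart of the proof is the absorption of the scale-critical error contributions of $\mathcal{L}[\phi]$ into the $r^p$-weighted flux. After integration by parts on $\s^2$ and use of a Hardy inequality on the cones, the potentials $w_0,W_0$ and first-order terms $w_1,W_1,q,Q$ contribute bulk errors of the schematic form $(\ep+o(1))\bigl(r^{p-1}|\rd_v\psi|^2+r^{p-2}|\rd_u\psi|^2+r^{p-3}|\psi|^2\bigr)$, where the $o(1)$ is as $v_0\to+\infty$ by \eqref{H1}. For $\ep$ and $v_0^{-1}$ sufficiently small, the first error is strictly dominated by the main bulk term $p\,r^{p-1}|\rd_v\psi|^2$ throughout the range $0\leq p<3-\eta(\ep)$, while the $|\rd_u\psi|^2$ and $|\psi|^2$ contributions are controlled by the integrated local decay \eqref{ILED} applied to $\phi$. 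A Grönwall argument in the $u$-variable, using the small-$\ep$ margin, then closes the extended $r^p$-hierarchy $0 \leq p < 3-\eta(\ep)$. The main technical obstacle will be closing this Grönwall simultaneously at every level of the $\rd_v$-commuted hierarchy, since each commutation regenerates fresh $\ep$-scale-critical errors that must be reabsorbed without destroying the small-$\ep$ balance at higher orders.

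Once the extended hierarchy is available, the standard dyadic/mean-value argument yields the energy decay \eqref{energy.est}; commuting with $\snabla^{\beta}$, $|\beta|\leq 2$, and applying Sobolev embedding on $\s^2$ upgrades this to the pointwise bound \eqref{pointwise.est} on $r\phi$. The radiation field $\psi_{\mathcal{I}}$ is produced as the $v$-limit of $r\phi$ via the $p>1$ portion of the hierarchy (which makes $r\phi(u,\cdot,\omega)$ Cauchy in $v$) and inherits the same bound.

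For the sharper estimate \eqref{pointwise.est2}, apply the entire scheme to $T\phi$. The hypotheses \eqref{boundedness.T}--\eqref{ILED.T} play the role of \eqref{boundedness}--\eqref{ILED} for $T\phi$, with the inhomogeneous $u_1^{-2+\eta'(\ep)}E[\phi](u_1)$ contribution being integrable in $u$ thanks to the already-proven \eqref{energy.est}; meanwhile \eqref{H3} ensures that the commutator $[T,\mathcal{L}[\cdot]]+\mathcal{L}[T\,\cdot]$ is again a scale-critical operator of the type \eqref{wave.main} whose coefficients satisfy \eqref{H1}, so the same Grönwall absorption closes and yields $|rT\phi|\ls u^{-1+\eta'(\ep)/2}$. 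In the far wave zone $v-u\gtrsim u$, estimate \eqref{pointwise.est2} follows directly from $|\phi|\leq |r\phi|/r$ and $r\gtrsim u$ using \eqref{pointwise.est}; in the intermediate region $R\leq v-u\lesssim u$, one writes $\phi(u,v,\omega)=\phi(u,v_\star(u),\omega)-\int_v^{v_\star(u)}\rd_{v'}\phi\,dv'$ with $v_\star(u)\sim 2u$ in the far zone, and bounds the $\rd_v\phi$ integral by Cauchy--Schwarz against the $T\phi$-hierarchy, supplying the extra $u^{-1}$ factor needed for $|\phi|\ls u^{-2+\eta(\ep)/2}$.
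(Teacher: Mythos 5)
Your overall frame (work with $\psi=r\phi$, run an $r^p$ multiplier, absorb the scale-critical errors using smallness of $\ep$, finish with a Gr\"onwall/dyadic argument) matches the paper, but two of your key mechanisms do not actually work as described, and these are precisely the places where the paper has to do something new.

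First, the closing of the hierarchy for $2\leq p<3$. You claim the bulk errors $\ep\, r^{p-2}|\rd_u\psi|^2$ and $\ep\, r^{p-3}|\psi|^2$ are ``controlled by the integrated local decay \eqref{ILED}.'' They are not: \eqref{ILED} carries a weight $(1+r)^{-\sigma}$ with $\sigma>1$, i.e.\ it only controls these quantities near the timelike curve $r=R$, whereas for $p$ close to $3$ the error $r^{p-3}|\psi|^2$ has essentially no decaying weight at all. The only general tool converting $\int r^{p-3}|\psi|^2$ into $\int r^{p-1}|\rd_v\psi|^2$ is the Hardy inequality, whose constant degenerates like $(2-p)^{-2}$, which is why the standard absorption into the bulk term caps the hierarchy at $p<2-O(\sqrt{\ep})$. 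To go past $p=2$ the paper (i) decomposes into spherical harmonics, handling $\phi_{\geq 1}$ by commuting with $r^2\rd_v$ and using the Poincar\'e inequality to make the resulting zeroth-order bulk term coercive, and (ii) for the spherical average $\phi_0$, estimates the error by Cauchy--Schwarz as $\ep[3-p]^{-1}\bigl(\int_{u_1}^{u_2}E_p[\psi_0]\bigr)^{1/2}\bigl(\tilde E_{p-1}(u_1)\bigr)^{1/2}$ and absorbs it into the \emph{boundary} flux $E_p[\psi_0](u_2)$ via the calculus Lemma~\ref{calc.lemma} and an induction on dyadic intervals. Your proposal never decomposes into harmonics and never says what the Gr\"onwall is applied to; as written, the absorption step for $p\in[2,3)$ fails.

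Second, the sharper estimate \eqref{pointwise.est2}. Applying ``the entire scheme to $T\phi$'' only reproduces $E[T\phi](u)\ls u^{-3+\eta}$, the same rate as $E[\phi]$, and your far-zone/intermediate-zone splitting then gives at best $|\phi|\ls u^{-3/2+\eta}$ on constant-$r$ curves (Cauchy--Schwarz with $\int|\phi_0|^2\,du'\ls u^{-3}$ and $\int|T\phi_0|^2\,du'\ls u^{-3}$ along $\gamma_R$ yields only $u^{-3/2}$). The point of Section~4 of the paper is that $E[T\phi_0]$ decays \emph{two powers faster}, $E[T\phi_0](u)\ls u^{-2-p_T+\eta_T}\approx u^{-5+O(\sqrt{\ep})}$, and this requires an additional $r^p$ hierarchy for the commuted quantity $\Thetazero=r\rd_v\psi_0$ up to $p_{high}$ (Proposition~\ref{new.hierarchy.prop}), fed into the identity $\rd_v T\psi_0=F+\rd_v(r^{-1}\Thetazero)$, together with the extra hypotheses \eqref{boundedness.T}, \eqref{ILED.T}, \eqref{H3}. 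Only then does the estimate $|\phi_0|^2(u,v_R(u))\ls\bigl(\int_u^\infty|\phi_0|^2\bigr)^{1/2}\bigl(\int_u^\infty|T\phi_0|^2\bigr)^{1/2}\ls u^{-4+O(\sqrt{\ep})}$ close, after which one propagates outward in $v$. This extra hierarchy, and the reason $T$-commutation gains two powers rather than none, are missing from your argument.
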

	We want to emphasize that, despite initial data being given on  characteristic bifurcate hypersurfaces $([u_0,u_F] \times \mathbb{S}^2 )\cup( [v_0,+\infty)\times \mathbb{S}^2)$, it is equivalent to start from a spacelike hypersurface (Cauchy problem) and evolve the solution up to $([u_0,u_F] \times \mathbb{S}^2 )\cup( [v_0,+\infty)\times \mathbb{S}^2)$  with no difficulty, see e.g.\ \cite{Moi2}, Section 4.

	We finally remark that it is also possible to propagate the point-wise estimates \eqref{pointwise.est}, \eqref{pointwise.est2} of Theorem~\ref{main.thm} in the region $\{ v-u \leq R\}$ (see \cite{AAG0,twotails}). We will however omit the relevant estimates for brevity. On a black hole spacetime,  \eqref{pointwise.est}, \eqref{pointwise.est2} are also valid up to the black hole event horizon, providing so-called red-shift estimates are satisfied, see for instance \cite{Moi2,inverse.Dejan} for such examples and the discussion in Section~\ref{section.example}.
	
		\begin{figure}[H]\label{fig}\begin{center}
	\includegraphics[width=80 mm, height=80 mm]{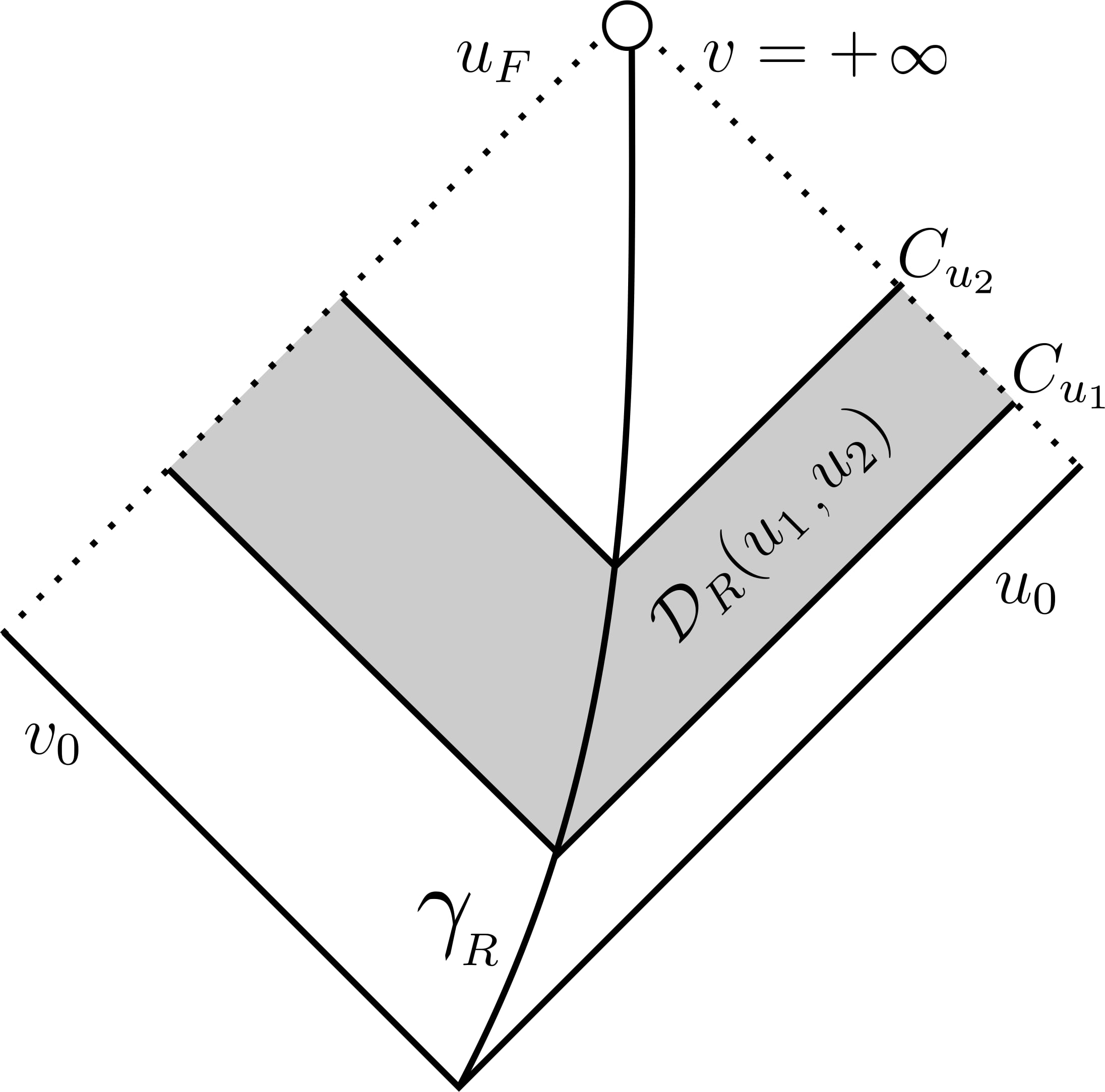}	
			\end{center}
		\end{figure}		
	\subsection{The principles of the  $r^p$ method}\label{section.rp} On Minkowski spacetime, the wave equation $-\rd_{t}^2 \phi + \rd_{x}^2 \phi +\rd_{y}^2 \phi +\rd_{z}^2 \phi=0$ takes the following form, where $u=t-r$, $v=t+r$, $r=\sqrt{x^2+y^2+z^2}$, $\psi= r\phi$ (radiation field) and $\sDelta$ is the standard Laplacian on $\mathbb{S}^2$: \begin{equation}\label{wave.mink}
	\rd_u \rd_v \psi =r^{-2}\sDelta \psi.
	\end{equation} The key idea of the $r^p$ method of Dafermos and Rodnianski \cite{rp} is to prove decay-in-time for $\phi$  using energy methods, namely  $L^2$-based estimates, which can  typically be carried out on more general spacetimes, with a potential or a nonlinearity. However, the standard energy $E(T)$  computed on the time-slice $\{t=T\}$ is constant and does not decay. Instead, one can compute the energy on a $V$-shaped foliation (indexed by $u$) as depicted in Figure~\ref{fig} and the goal is to prove that this energy $\tilde{E}(u)$ ends up decaying in $u$. To capture this decay, Dafermos and Rodnianski use the multiplier $r^p \rd_v \psi$ and, upon integration on a spacetime region $\{u_1 \leq u \leq u_2\}$, obtain \begin{equation}\label{rp.intro}
 \int_{u=u_2} r^p|\rd_v \psi|^2+  \int_{u_1}^{u_2} [\int_{u=u'}   r^{p-1} |\rd_v \psi|^2] du'\lesssim     \int_{u=u_1}  r^p|\rd_v \psi|^2+ \tilde{E}(u_1),
	\end{equation} for all $u_1<u_2$, where $0 < p\leq 2$, and \eqref{ILED} is used to absorb various boundary terms at the curve $\{r=R\}$ into the energy $\tilde{E}(u_1)$. \eqref{rp.intro} consists of two statements: the boundedness of the $r^p$-weighted energy for $0\leq p\leq 2$, and the (integrated) decay of the $r^{p-1}$ weighted energy. Using \eqref{rp.intro} as a hierarchy of estimates in $p$ (see \cite{rp} for details), Dafermos and Rodnianski obtain the time-decay of the energy at the rate $u^{-p}$, where here $p=2$: \begin{equation}\label{decay.intro}
	\tilde{E}(u) \lesssim u^{-2}.
	\end{equation} We emphasize that the proof of \eqref{decay.intro} relies heavily on the validity of the energy boundedness \eqref{boundedness} and integrated local decay \eqref{ILED} estimates, used as a black box. The argument of the $r^p$ method can be also be carried out on the Schwarzschild or  (sub-extremal) Reissner--Nordstr\"{o}m spacetime \eqref{RN} and in higher dimensions, see \cite{Volkerrp} and 	\cite{Moschidisrp} for the more general setting in which it can be applied.
	
In \cite{AAG0,AAG1}, Angelopoulos, Aretakis and Gajic introduced a novel viewpoint on the $r^p$ method allowing  to prove sharp decay and late-time tails for a large class of wave equations \begin{equation}\label{subcrit}
	\Box_g \phi = V(r) \phi, \text{  where } |V|(r) \lesssim r^{-2-\ep} \text{ as } r \rightarrow +\infty,
\end{equation} on a stationary and spherically-symmetric asymptotically flat  metric $g$  and a scale sub-critical potential $|V|(r) \lesssim r^{-2-\ep}$, where $\ep>0$. %together with the introduction of so-called Newman--Penrose constants, 
 See also \cite{AAG2,Ma3,Ma4} for follow-up work and extensions of these methods. We also mention the recent \cite{twotails} establishing late-time tails for a large class of equations which is even more general than \eqref{subcrit} (for instance also allowing for first-order terms and nonlinearities) using a novel approach.
 
 \subsection{Our new strategy to address scale-critical potentials} Following the spirit of \cite{rp}, we make the assumptions of energy boundedness \eqref{boundedness} and integrated local decay \eqref{ILED} that we use as a black box. Using the multiplier $r^p\rd_v$ for \eqref{wave.main} gives schematically (compare with \eqref{rp.intro})  \begin{equation}\label{rp.intro2}
 	\int_{u=u_2} r^p|\rd_v \psi|^2+  \int_{u_1}^{u_2} [\int_{u=u'}   r^{p-1} |\rd_v \psi|^2] du'\lesssim     \int_{u=u_1}  r^p|\rd_v \psi|^2+ \tilde{E}(u_1) + \ep \int_{u_1}^{u_2} [\int_{u=u'}   r^{p-2} |\psi| |\rd_v \psi|] du',
 \end{equation}
 where we have taken $w_1=W_1=q=Q=0$ for now to simplify and $\ep \int_{u_1}^{u_2} [\int_{u=u'}   r^{p-2} |\psi| |\rd_v \psi|] du$ is an integrated (so-called ``bulk'') error term.
 The standard approach of the $r^p$ method (see \cite{AAG0}) is to absorb the bulk error term  into the $\int_{u_1}^{u_2} [\int_{u=u'}   r^{p-1} |\rd_v \psi|^2]$ term in the left-hand-side, which works well for  a sub-critical potential satisfying \eqref{subcrit}, with the use of the Hardy inequality (see Proposition~\ref{Hardy.prop}). Here, the Hardy inequality gives  \begin{equation}\label{hardy.intro}\begin{split}
 	& \int_{u_1}^{u_2} [\int_{u=u'}   r^{p-2} |\psi| |\rd_v \psi|] du' \lesssim\int_{u_1}^{u_2} [\int_{u=u'}   r^{p-1}  |\rd_v \psi|^2] du'+  \int_{u_1}^{u_2} [\int_{u=u'}   r^{p-3} |\psi|^2 ] du'\\ & \lesssim  \int_{u_1}^{u_2} [\int_{u=u'}   r^{p-1}  |\rd_v \psi|^2] du' + \tilde{E}(u_1),\end{split}
 \end{equation} where $0\leq p<2$. To absorb the error term $\ep \int_{u_1}^{u_2} [\int_{u=u'}   r^{p-1} |\psi| |\rd_v \psi|] du'$ into the left-hand-side with the help of \eqref{hardy.intro}, we must further restrict  $p<2-O(\sqrt{\ep})$, which gives the $r^p$-hierarchy in a range $0<p< 2-O(\sqrt{\ep})$ \begin{equation}\label{rp.intro3}
 \int_{u=u_2} r^p|\rd_v \psi|^2+  \int_{u_1}^{u_2} [\int_{u=u'}   r^{p-1} |\rd_v \psi|^2] du'\lesssim     \int_{u=u_1}  r^p|\rd_v \psi|^2+ \tilde{E}(u_1),
 \end{equation} which is sufficient to obtain \eqref{decay.intro}, up to an arbitrarily small loss $O(u^{O(\sqrt{\ep})})$. \eqref{decay.intro} is one power of $u$ away from sharpness, however (compare with \eqref{energy.est}). To go up to $p<3$ and obtain sharper estimates, we must decompose the solution $\phi$ into its spherical average and higher angular modes, following the original idea of \cite{AAG0}: \begin{equation}
 \phi(u,v,\omega)= \underbrace{\phi_0(u,v)}_{:= \int_{\mathbb{S}^2} \phi(u,v,\omega) d\omega}+ \phi_{\geq 1}(u,v,\omega).
 \end{equation} It turns out that $r^2 \rd_v(r\phi_{\geq 1})$ also obeys the hierarchy \eqref{rp.intro3}, translating\footnote{We note that this step requires to commute \eqref{wave.main} with $\rd_v$, which is why \eqref{H1} requires assumptions on the $\rd_v$ derivatives of the potential terms. These assumptions, however, are not necessary for spherically-symmetric solutions of \eqref{wave.main}.} into faster energy decay for  $\phi_{\geq 1}$, i.e.,
 \begin{equation}\label{faster.intro}
\tilde{E}[\phi_{\geq 1}](u) \lesssim u^{-4+ O(\sqrt{\ep})},
 \end{equation} and \eqref{faster.intro} is essentially sufficient to show \eqref{energy.est}, \eqref{pointwise.est}, \eqref{pointwise.est2} for $\phi_{\geq 1}$. Therefore, our next objective is to prove \eqref{rp.intro3} for the spherical average $\psi_0$ and $2<p<3$. The novelty of our approach is to absorb the bulk error term into  	$\int_{u=u_2} r^p|\rd_v \psi|^2$ instead of $\int_{u_1}^{u_2} [\int_{u=u'}   r^{p-1} |\rd_v \psi|^2] du'$ using a  Grönwall argument. Starting from \eqref{rp.intro2}, we proceed,  using the Hardy inequality  differently from \eqref{hardy.intro} to get schematically, exploiting \eqref{rp.intro3} with $p-1$
 \begin{equation}\label{hardy.intro2}\begin{split}
 &	\int_{u_1}^{u_2} [\int_{u=u'}   r^{p-2} |\psi| |\rd_v \psi|] du' \lesssim[\int_{u_1}^{u_2} [\int_{u=u'}   r^{p}  |\rd_v \psi|^2] du']^{\frac{1}{2}} [\int_{u_1}^{u_2} [\int_{u=u'}   r^{p-4}  | \psi|^2] du']^{\frac{1}{2}}\\ &\ls [\int_{u_1}^{u_2} [\int_{u=u'}   r^{p}  |\rd_v \psi|^2] du']^{\frac{1}{2}} [\int_{u_1}^{u_2} [\int_{u=u'}   r^{p-2}  | \rd_v\psi|^2] du']^{\frac{1}{2}} \lesssim [\int_{u_1}^{u_2} [\int_{u=u'}   r^{p}  |\rd_v \psi|^2] du']^{\frac{1}{2}} [\int_{u=u_1}   r^{p-1}  |\rd_v \psi|^2+ \tilde{E}(u_1)]^{\frac{1}{2}} .\end{split}
 \end{equation} To conclude the proof of \eqref{energy.est}, we close the $r^p$-weighted hierarchy \eqref{rp.intro3} for $0<p< 3-O(\sqrt{\ep})$ using an induction argument, where $u_1$ and $u_2$ take values on a dyadic sequence. The proof of \eqref{pointwise.est} follows from \eqref{energy.est} by a standard argument. To obtain the sharp point-wise decay on a constant-$r$ curve \eqref{pointwise.est2}, we must retrieve faster decay for the time-derivative $T\phi_0$, i.e, \begin{equation}\label{energy.est.T0}	E[T\phi_0](u)  \lesssim u^{-5+\eta(\ep)},\end{equation} which is done considering a $r^p$-weighted hierarchy for $r\rd_v(r\phi_0)$ (see \cite{inverse.Dejan} where this hierarchy was introduced).
 
 We finally note that we additionally allow for first-order linear terms in \eqref{wave.main}. The ingoing derivative terms in \eqref{wave.main} are handled with the use of a novel ``Hardy inequality'' (\eqref{Hardy.in} in Proposition~\ref{Hardy.prop}) which only works for solutions to the wave equation \eqref{wave.main}.
	
	\subsection{Examples of spacetimes and potentials satisfying the assumptions}\label{section.example}
	
		 \subsubsection{Decay rates in the assumptions}
	For the sake of comparison with the Minkowski metric, let us define a time-variable $t=v+u$, and we keep in mind that $r$ is comparable to $v-u$ (for large $r$). We are interested in a region where $u\geq u_0$, $v-u\geq R$, thus \begin{equation}
		r(u,v)\lesssim u\lesssim t(u,v) \lesssim v.
	\end{equation} Thus, the $r$-decay assumptions in Theorem~\ref{main.thm} represent the weakest possible form of decay. In the presence of nonlinearities, where Theorem~\ref{main.thm} can be adapted, the decay of some coefficients may be obtained in terms of inverse powers of $t$, which is stronger than the inverse powers of $r$ required in \eqref{H0}, \eqref{H1}, and \eqref{H3}.
	
	Finally, note that the scaling in \eqref{wave.main} is arranged to respect the radiative structure of the wave equation which gives a finite limit for $r\phi$ and $r^2 \rd_v (r\phi)$ towards $v=+\infty$  so that, for fixed $u$, the three terms $\phi$, $\rd_u \phi$, $r\rd_v \phi$ share the same scaling, i.e., \begin{equation}
		|\phi|(u,v,\omega) \lesssim r^{-1},\ |\rd_u\phi|(u,v,\omega) \lesssim r^{-1},\ r|\rd_v\phi|(u,v,\omega) \lesssim r^{-1} \text{ as } v \rightarrow+\infty.
	\end{equation}
	
	\subsubsection{Metric assumptions}

		We first note that there exist many examples of Lorentzian metrics $g$ satisfying \eqref{H0}, such as the (sub-extremal) Reissner--Nordstr\"{o}m metric, modeling the exterior of a charged star/black hole for $r\geq r_+(M,e)$ \begin{equation}\label{RN}
			g_{RN} =- (1-\frac{2M}{r}+ \frac{e^2}{r^2}) dt^2+ (1-\frac{2M}{r}+ \frac{e^2}{r^2})^{-1} dr^2 + r^2 d\sigma_{\mathbb{S}^2},
		\end{equation} where $0\leq |e|\leq M$ and $r_+(M,e):= M+\sqrt{M^2-e^2}$. Indeed, introducing the standard coordinates \begin{equation}
		u=t-r^{*},\ v= t+ r^{*},\ \frac{dr^{*}}{dr}= (1-\frac{2M}{r}+ \frac{e^2}{r^2})^{-1},
		\end{equation} shows that \eqref{RN}assumes the form of $g$ given in \eqref{wave.main} with \begin{equation}
		\Omega^2(u,v) = 4  (1-\frac{2M}{r}+ \frac{e^2}{r^2}),\ \rd_v r= -\rd_u r=   (1-\frac{2M}{r}+ \frac{e^2}{r^2}).
		\end{equation}

		 Note that in the case $e=0$, \eqref{RN} reduces to the well-known Schwarzschild metric, which itself  reduces to the (trivial) Minkowski metric $g= -dt^2 + dx^2+dy^2+dz^2$ for $M=0$,  under which $\Box_g = -\rd_{t}^2+\rd_{x}^2+\rd_{y}^2+\rd_{z}^2$.

		 Lastly, we note that a large class of non-stationary spacetimes will also satisfy \eqref{H0}, in particular spacetimes which converge to, or remain close to the Reissner--Nordstr\"{o}m metric \eqref{RN} at large-time. %Therefore, Theorem~\ref{main.thm} could in principle be adapted to  address quasilinear wave equations on a (spherically-symmetric) perturbation of the Reissner--Nordstr\"{o}m spacetime \eqref{RN}, for instance in the context of a proof of orbital or asymptotic stability of \eqref{RN} for the Einstein--(Maxwell) equations.

		 \subsubsection{Integrated local decay and energy boundedness}
		 
	It is well-known \cite{claylecturenotes} that	the energy boundedness \eqref{boundedness} and integrated local energy estimate \eqref{ILED} hold on the Schwarzschild/(sub-extremal) Reissner--Nordstr\"{o}m metric  in the absence of a potential, i.e., for $\Box_{g_{RN}}\phi=0$.
	
	In \cite{Moi2},   \eqref{boundedness} and  \eqref{ILED} are established\footnote{While the proof of \cite{Moi2} is, strictly speaking, only for the charged scalar field equation, it is easy to generalize it to \eqref{wave.main}.} for spherically-symmetric solutions of \eqref{wave.main} on a (sub-extremal) Reissner--Nordstr\"{o}m metric, i.e., for $g=g_{RN}$ and with $w_1=W_1=0$, under the smallness condition of $\ep$.

	In \cite{inverse.Dejan}, Gajic established \eqref{boundedness} and  \eqref{ILED}  for \eqref{wave.main} on the sub-extremal Reissner--Nordstr\"{o}m metric  with $w_1=W_1=q=Q=0$, and $\ep=1$ (no smallness condition), under the assumption that $V(r)= r^{-2} (w_0(r) + W_0(r))$ is time-independent and  $w_0(r) + W_0(r)$ admits a limit as $r\rightarrow \infty$, i.e., there exists $\alpha>-\frac{1}{4} $ such that \begin{equation}\label{asymp.V}
V(r) \sim \alpha r^{-2} \text{ as } r \rightarrow+\infty,
	\end{equation}\begin{equation}\label{global.V}
\text{ and }	V(r) \geq -\frac{1}{4r^2} \text{ for all } r \geq r_+(M,e) = M +\sqrt{M^2-e^2}.
	\end{equation}
		 It is also possible to replace \eqref{global.V} with a no-resonance condition on $V(r)$, see \cite{inverse.Dejan}.

		 Coming back to more general considerations, we discuss the $T$-commuted energy boundedness \eqref{boundedness.T} and integrated local decay \eqref{ILED.T}. Note that, if the metric and  the potentials are stationary, i.e., \begin{equation}\label{stationary}
		 	Tr=0,\ T\Omega^2=0,\ Tw_i=Tq=TW_i=Tq=0,
		 \end{equation} then  \eqref{boundedness.T}, \eqref{ILED.T} immediately follow from  \eqref{boundedness} and \eqref{ILED}, in fact, we  have the stronger estimate: \begin{equation}
\begin{split}
	&  \int_{\mathcal{D}_{u_1,\infty}}  \frac{|T\phi|^2(u,v,\omega)}{(1+r)^{\sigma'}} du dv d\omega  \leq C_{\sigma'} E[T\phi](u_1),\\ & E[\phi](u_2) \leq D E[\phi](u_1).
\end{split}
		 \end{equation} More generally, it is possible to retrieve \eqref{boundedness.T}, \eqref{ILED.T} if the quantities involved in \eqref{stationary} decay in time at a rate $O(u^{-1+O(\sqrt{\ep})})$. Such a result should, however, be obtained on a case-by-case basis.
	
		 \subsubsection{Potential assumptions for sharp energy and radiation field decay}
	We now discuss \eqref{H1}, the potential assumptions used	 to obtain the first conclusions of Theorem~\ref{main.thm}, i.e., \eqref{energy.est}, \eqref{pointwise.est}. We note that \eqref{H1} allows for a large class of time-dependent potentials.	  The most general form for $w_0$ to respect \eqref{H1} allows for linear oscillations in $u$ and logarithmic oscillations in $v$ or $r$. $q$ and $w_1$ are also allowed to oscillate linearly in $u$, however, they cannot oscillate in $r$. In other words,   \begin{equation}\label{h1.intro}
		 	w_0(u,v) = f_0\big( u, \log(r(u,v))\big),\ w_1(u,v) = f_1( u,r^{-1}(u,v)),\  q(u,v) = f_q( u,r^{-1}(u,v)),
		 \end{equation} where $f_0(X,Y)$, $f_1(X,Y)$, $f_q(X,Y)$ are bounded functions and $\rd_Y f_0$, $\rd_Y f_q$, $\rd_Y f_1$  are bounded. For $W_0$, $W_1$ and $Q$, we can take, for any $\ep>0$ and $F_0(X,Y)$, $F_1(X,Y)$, $F_Q(X,Y)$  bounded with $\rd_Y F_0$, $\rd_Y F_1$, $\rd_Y F_q$  bounded \begin{equation}\label{h1.intro2}
		 W_0(u,v) = (1+r)^{-\ep}F_0\big( u, \log(r(u,v))\big),\ W_1(u,v) =F_1( u,r^{-1-\ep}(u,v)),\  Q(u,v) =F_Q( u,r^{-1-\ep}(u,v)).
		 \end{equation}
		 
		 		 \subsubsection{Potential assumptions for sharp point-wise scalar field decay}

 To obtain the stronger conclusion of Theorem~\ref{main.thm}, i.e., \eqref{pointwise.est2}, we require \eqref{H3}, which we now discuss. The key difference, compared to the less demanding assumptions \eqref{H1}, is that the potentials are no longer allowed to feature linear oscillations in $u$, only logarithmic ones, i.e., \begin{equation}
 	w_0(u,v) = g_0\big( \log(u), \log(r(u,v))\big),\ w_1(u,v) = g_1( \log(u),r^{-1}(u,v)),\  q(u,v) = g_q( u,r^{-1}(u,v)),
 \end{equation} where $g_0(X,Y)$, $g_1(X,Y)$, $g_Q(X,Y)$ are bounded functions with bounded derivative (compared with \eqref{h1.intro}).  $W_0$, $W_1$ and $Q$ obey similarly stronger assumptions compared to \eqref{h1.intro2}, which we omit to state.
	
	\subsection{Previous works on scale-critical potentials and sharpness of the decay}\label{section.sharp}
	
	The literature on decay estimates for scale-critical potentials is vast, we refer to the review \cite{wave.Schlag}.

For the wave equation \eqref{wave.main} with an exact inverse-square potential  on Minkowski spacetime, i.e., \begin{equation}
		g= -dt^2 + dx^2+ dy^2+dz^2,\  w_0(u,v)=1,\ w_1=W_0=W_1=q=Q=0,
	\end{equation} it is known \cite{Dean,MoiDejan} that the following asymptotics  hold \begin{equation}\label{sharp}
		r|\phi|(u,v,\omega) \approx \left( \frac{r}{u v}\right)^{\frac{1+\sqrt{1+4\ep}}{2} }.
	\end{equation} 
	
		Moreover, \eqref{sharp} also holds on the sub-extremal Reissner--Nordstr\"{o}m metric \cite{inverse.Dejan} for a large class of time-independent asymptotically inverse-square potentials satisfying \eqref{asymp.V}, \eqref{global.V}, see \cite{Schlag.scalecrit} for previous works involving sharp upper bounds. Lastly, precise late-time tails for an even more general class of stationary spacetimes and time-independent scale-critical potentials were proved by Hintz \cite{Hintz.new}.

	In comparison, Theorem~\ref{main.thm} allows for a class of time-dependent potentials that are allowed to oscillate,  as discussed earlier in this section; however, we do not derive precise late-time tails. Our approach moreover relies on the smallness of $\ep$, and the estimates \eqref{energy.est}, \eqref{pointwise.est}, \eqref{pointwise.est2} are not, strictly speaking, sharp in terms of decay rate, because they feature an arbitrarily small loss $O(u^{\eta(\ep)})$, where $\eta(\ep) = o(1)$ as $\epsilon \rightarrow 0$. However, the above examples satisfying the estimates \eqref{sharp} show that our  estimates \eqref{energy.est},  \eqref{pointwise.est}, \eqref{pointwise.est2} in Theorem~\ref{main.thm} are sharp, up to this arbitrarily small loss.  Note, in particular, that on a constant-$r$ curve, \eqref{sharp} gives \begin{equation}
		|\phi|(u,v,\omega)\lesssim u^{-2+ O(\ep)}.
	\end{equation}
	Finally, we notice that the Maxwell-charged-scalar-field equations are modeled after a wave equation with a scale-critical of the form \eqref{wave.main}, where $\ep$ represents the asymptotic charge of the spacetime. This system was studied by the author \cite{Moi2} in spherical symmetry on a Reissner--Nordstr\"{o}m black hole assuming $\ep$ is small. On the other hand, for  the Maxwell-charged-scalar-field equations  on Minkowski spacetime, $\ep$ is schematically a time-dependent function $\ep=Q(t) \rightarrow 0$ as $t\rightarrow+\infty$; in this context, Yang and Yu proved global existence  with no smallness assumption on the initial data \cite{YangYu}; see also \cite{LindbladKG,Bieri.KG,ShiwuKG} for previous works.

	\subsection{Acknowledgments} The author gratefully acknowledges  support from the NSF Grant DMS-2247376. Special thanks go to Hayd\'{e}e Pacheco for the figure.

		\section{Preliminary}\label{prelim.section}
		
		We start by re-writing \eqref{wave.main} in terms of the radiation field $\psi:= r\phi$: we find the formula
		
		\begin{equation}\label{psi.eq}
\rd_u \rd_v \psi = \frac{\Omega^2}{4}r^{-2}\slashed{\Delta}_{\mathbb{S}^2} \psi     + \frac{\ep}{r^2}\left(\sum_{i=0}^{1}%[\epsilon \tw_i(u,v)+ \tW_i(u,v) ]
  \tw_i(u,v) \cdot   \rd_u^{i}\psi+ % [\epsilon \tq(u,v)+ \tQ(u,v) ]
    \tq(u,v) \cdot r \rd_v \psi \right),
		\end{equation}  where we defined 
		\begin{equation*}
			\begin{split}
			&	\tw_i(u,v) = \cw_i(u,v) + \ep^{-1} \cW_i(u,v)\\& 
				\tq(u,v) = \cq(u,v) + \ep^{-1} \cQ(u,v)
			\end{split}
		\end{equation*} and
		
		\begin{equation*}\begin{split}& \cw_0(u,v) = -\frac{\Omega^2}{4} w_0(u,v)+\frac{[\rd_u r ]\Omega^2}{4}  w_1(u,v)+ \frac{[\rd_v r]\Omega^2}{4} q(u,v),\ \tw_1(u,v) = -\frac{\Omega^2}{4} w_1(u,v),\\ & \cW_0(u,v) = -\frac{\Omega^2}{4} W_0(u,v)+ \frac{[\rd_u r ]\Omega^2}{4}  W_1(u,v)+  \frac{[\rd_v r]\Omega^2}{4}+ r \rd_u \rd_{v} r,\ \cW_1(u,v) = -\frac{\Omega^2}{4} W_1(u,v),\\ &  \cq(u,v) = -\frac{\Omega^2}{4} q(u,v),\ \cQ(u,v) = -\frac{\Omega^2}{4} Q(u,v).		\end{split}		\end{equation*}
		
		 Note that, under the assumptions \eqref{H0}, \eqref{H1}, we have 
		 	\begin{equation}\label{H2}
		 		  |\tw_i|(u,v),\ |\tq|(u,v),\ r|\rd_v \tw_0|(u,v),\ r|\rd_v \tq|(u,v),\ r^2|\rd_v \tw_1|(u,v)  \lesssim 1,
		 \end{equation} and under the assumption \eqref{H3}, \begin{equation}\label{H4}
		 |\rd_u \tw_i|(u,v),\ |\rd_u \tq|(u,v),\   \lesssim r^{-1}(u,v),
		 \end{equation}	
	in the region where $ r\geq R(\epsilon)$, where we assume $R(\ep)>0$ to be a large constant. We then  define the null outgoing cone  \begin{equation}
			C_u = \{ u'=u,\ v \geq v_R(u),\ \omega \in \mathbb{S}^2\},
		\end{equation} where $v_R(u)=u+R$ is such that $\rho(u,v_R(u))=R$, where $\rho=v-u$. We will use the notation $\gamma_R=\{(u,v_R(u),\omega),\ u \geq u_0,\ \omega \in \mathbb{S}^2\}=\{(u_R(v),v,\omega),\ v \geq v_0,\ \omega \in \mathbb{S}^2\}$.% We also define  global  foliation		   $\N_u$ such that $t(u',v'):=v'+u'$ remains constant when $r\leq R$, i.e., \begin{equation}\label{N.def}\N_u  = C_u \cup  \{ (u',v',\omega),\ v'=-u'+v_R(u)+u,\ u'\leq u,\ \omega \in \mathbb{S}^2\},	\end{equation}
		We also define  global  $V$-shaped foliation		   $\N_u$  \begin{equation}\label{N.def}
			\N_u  = C_u \cup  \underbrace{\{ u' \leq u,\ v'=v_R(u) ,\ \omega \in \mathbb{S}^2\}}_{\underline{C}_u},
		\end{equation}
		 where $u_R(v)$ is such that $r(u_R(v),v)=R$;
	and for any $u_1<u_2$, the spacetime domain \begin{equation}\label{D.def}
\DD(u_1,u_2)=\{ u_1\leq u\leq u_2,\ v\geq v_R(u),\ \omega \in \mathbb{S}^2\}.
	\end{equation}
	and the unweighted energy on $\N_u$ defined as  \begin{equation}
		E[\phi](u) =  \int_{C_u} ( r^2|\rd_v \phi|^2+ |\snabla \phi|^2)  dv d\omega+ \int_{\underline{C}_u} ( r^2|\rd_u\phi|^2+ |\snabla \phi|^2)  du' d\omega
	\end{equation}
	 \begin{equation}
		E_p[\psi](u) = \int_{C_u} r^{p} |\rd_v \psi|^2 dv d\omega,
	\end{equation}
	\begin{equation}
		\tilde{E}_p[\phi](u) = 	E_p[r\phi](u) + E(u).
	\end{equation}
	
	Our convention is that the volume form that we use is always $du dv d\sigma_{\mathbb{S}^2}$.

	For any function $f \in L^2(\mathbb{S}^2)$, we write its decomposition in terms of spherical harmonics $\omega \in \mathbb{S}^2 \rightarrow Y_L(\omega)$ \begin{equation}\begin{split}
&f(\omega) = \sum_{L=0}^{+\infty} f_L Y_L(\omega),\\ & f_{\leq L_0 }(\omega) = \sum_{L=0}^{L_0} f_L Y_L(\omega),\\ & f_{\geq L_0 }(\omega)=\sum^{+\infty}_{L=L_0} f_L Y_L(\omega), \end{split}
	\end{equation} for any $L_0 \in \mathbb{N}$, and, of course, the notation extends to $f(u,v,\omega)$ for which $f_L(u,v)$ now also depends on $(u,v)$.
	
	We will also always adopt the convention $A\ls B$ if there exists a constant $C>0$ that it independent of $\ep$ such that \begin{equation}
		A \leq C \cdot B.
	\end{equation} We will also write $A\pm O(f(\ep)$ as a replacement for $A \pm f(\ep)$, where $f(x)\geq 0$ and $f(0)=0$.
	
	Finally, with no loss of generality, we will always assume that $u_0>1$.
	\section{Boundedness of the $r^p$-weighted energy and energy decay}

	\subsection{$r^p$-weighted energy and  black box  decay results}
		In this section, we provide preliminary calculus results that will be useful in the sequel. The key idea is to succeed in converting the boundedness of the $r^p$-weighted energy into the time-decay of the unweighted energy.
	
	\begin{lemma}[\cite{Moi2}, Lemma 6.3] \label{lemma.hierarchy}
		Suppose that there exists $1<p<2$  such that for all $u_0\leq u_1<u_2$
		
		\begin{equation} \label{lemmadecay}
			\int_{u_1}^{u_2} E_{p-1}[\psi](u)du + \tilde{E}_{p}(u_2)  \lesssim \tilde{E}_{p}(u_1). 
		\end{equation}

		Then for all $0 \leq q \leq p$ and for all $k \in \mathbb{N}$ 
		
		\begin{equation}
			\tilde{E}_{q}(u) \lesssim  u^{-k(1-\frac{q}{p})}+ \frac{\sup_{ 2^{-2k-1} u \leq u' \leq u} \tilde{E}_{p}(u')}{u^{p-q}} \lesssim u^{-p+q}\cdot \tilde{E}_{p}(u_0).
		\end{equation}
	\end{lemma}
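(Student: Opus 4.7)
The proof I have in mind is a dyadic pigeonhole plus interpolation argument in the style of \cite{rp}. The starting point is that the hypothesis, applied with $u_1 = u_0$ and $u_2 = u$, directly yields the uniform boundedness $\tilde{E}_p(u) \lesssim \tilde{E}_p(u_0)$. The second ingredient is a Hölder-type interpolation: writing $r^q = (r^p)^{q/p}$ and applying Hölder with conjugate exponents $p/(p-q)$ and $p/q$ gives $E_q[\psi] \leq E_0[\psi]^{1-q/p} E_p[\psi]^{q/p}$ for $0 \leq q \leq p$, and similarly the variant $E_q[\psi] \leq E_{p-1}[\psi]^{p-q} E_p[\psi]^{q-p+1}$ valid for $q \in [p-1,p]$. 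These transfer to corresponding inequalities for $\tilde{E}_q$ up to Hardy-type corrections which, since $r \geq R > 0$ on $C_u$, are absorbed into the energy $E$.

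The core step is the dyadic pigeonhole. Applying the hypothesis on the interval $[u/2, u]$ and using the mean value theorem on $\int_{u/2}^u E_{p-1}[\psi](u') du' \lesssim \tilde{E}_p(u/2)$ produces a ``good slice'' $\tau \in [u/2, u]$ with $E_{p-1}[\psi](\tau) \lesssim \tilde{E}_p(u/2)/u$. Combining this with the $q \in [p-1,p]$ interpolation and the boundedness $E_p[\psi](\tau) \leq \tilde{E}_p(\tau) \lesssim \tilde{E}_p(u_0)$ yields the desired pointwise decay $E_q[\psi](\tau) \lesssim u^{-(p-q)} \tilde{E}_p(u_0)$ at the good slice.

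The remaining task is to propagate this bound from $\tau$ back to $u$, which I would do by iterating the pigeonhole on the successive sub-intervals $[u/2^{j+1}, u/2^j]$ for $j = 0, \ldots, k-1$, chaining the hypothesis across scales and using the boundedness $\tilde{E}_p(u) \lesssim \tilde{E}_p(\tau_j)$ together with interpolation to transfer decay at level $q$ from each $\tau_j$ to the next. After $k$ iterations one arrives at
\begin{equation*}
\tilde{E}_q(u) \lesssim u^{-k(1-q/p)} + \frac{\sup_{u' \in [u/2^{2k+1}, u]} \tilde{E}_p(u')}{u^{p-q}},
\end{equation*}
and the second estimate follows by taking $k$ large enough that $u^{-k(1-q/p)} \lesssim u^{-(p-q)}$ and invoking Step~1.

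The main obstacle is precisely this propagation step: since the hypothesis is a hierarchy only at level $p$, there is no direct monotonicity statement for $\tilde{E}_q$, so one cannot simply bound $\tilde{E}_q(u)$ by $\tilde{E}_q(\tau_j)$. Instead, the interpolation and the dyadic subdivision must be orchestrated so that each iteration genuinely trades a factor of $2^{-(1-q/p)}$ against the bounded quantity $\tilde{E}_p$; making sure that the Hardy passage between $E_q[\psi]$ and $\tilde{E}_q$ does not accumulate into a logarithmic loss across the $k$ iterations is the principal technical nuisance.
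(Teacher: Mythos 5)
A preliminary remark: this paper contains no proof of the lemma at all --- it is imported verbatim from \cite{Moi2}, Lemma 6.3 --- so there is no in-text argument to measure your proposal against, and I can only assess it on its own terms. Your framework (uniform boundedness of $\tilde{E}_p$ from the hypothesis applied with $u_1=u_0$, H\"older interpolation between weighted energies, a dyadic pigeonhole via the mean value theorem, and a $k$-fold iteration consistent with the $2^{-2k-1}u$ window in the statement) is the right skeleton, and the two interpolation inequalities you write down are correct.

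There is, however, a genuine gap, and it is exactly the step you yourself flag as ``the main obstacle'': you never say how the iteration actually gains anything. Concretely, the hypothesis \eqref{lemmadecay} only ever controls $\int_{u/2}^{u}E_{p-1}[\psi](u')\,du'$ by $\tilde{E}_{p}(u/2)$, and $\tilde{E}_p$ is merely bounded --- it does not improve from one dyadic scale to the next. So every application of the pigeonhole returns the same bound $E_{p-1}[\psi](\tau_j)\lesssim u^{-1}\,\tilde{E}_p(u_0)$ at the good slices $\tau_j$, and your interpolation $E_q\le E_{p-1}^{\,p-q}E_p^{\,q-p+1}$, valid only for $q\in[p-1,p]$, then yields $E_q(\tau_j)\lesssim u^{-(p-q)}$ in that restricted range alone. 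For $q<p-1$ --- in particular for $q=0$, which is the case actually invoked in Corollary~\ref{low.cor.decay} to obtain $E[\phi](u)\lesssim u^{-p_{low}}$ --- the only available comparison is $E_q\lesssim E_{p-1}$ (since $r\ge R$ on $C_u$), which stalls at $u^{-1}$, strictly weaker than the claimed $u^{-(p-q)}$ with $p-q>1$. Iterating the pigeonhole on the sub-intervals $[u/2^{j+1},u/2^{j}]$ as you propose does not break this: each sub-interval returns the same $u^{-1}$ bound, so nothing accumulates. You therefore need to exhibit the quantity that genuinely improves by a factor $u^{-(1-q/p)}$ per iteration --- this is precisely what the term $u^{-k(1-q/p)}$ in the conclusion is recording --- and to explain how the single hierarchy at level $p$, the energy boundedness \eqref{boundedness}, and the interpolation $E_q\le E_0^{\,1-q/p}E_p^{\,q/p}$ are combined to propagate the slice bounds to all $u$ despite the absence of any monotonicity for $\tilde{E}_q$ with $q<p$. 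As written, the proposal describes the architecture of the proof but leaves its load-bearing wall unbuilt.
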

	Now, we move to a point-wise decay result taking advantage of the decay of the weighted energy.
	\begin{prop}\label{energy.decay.prop}
		For any $\gamma \in (0,\frac{1}{2})$, and $v\geq v_R(u)$, the following estimate holds: \begin{equation}\label{pointwise1}
			r^{\frac{1}{2} + \gamma}(u,v) \|\phi(u,v,\cdot)\|_{L^2(\mathbb{S}^2)}\lesssim \left[\tilde{E}_{2\gamma}(u)\right]^{\frac{1}{2}}.
		\end{equation}
	\end{prop}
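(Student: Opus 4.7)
The plan is to pass to the radiation field $\psi := r\phi$, upon which the target estimate becomes
\[
r^{2\gamma - 1}(u,v)\,\|\psi(u,v,\cdot)\|_{L^2(\mathbb{S}^2)}^2 \lesssim \tilde{E}_{2\gamma}(u).
\]
The driving identity is
\[
\partial_v\bigl(r^{2\gamma - 1}\psi^2\bigr) = (2\gamma - 1)\,r^{2\gamma-2}(\partial_v r)\,\psi^2 + 2\,r^{2\gamma-1}\,\psi\,\partial_v\psi,
\]
in which the first bulk term has a \emph{strictly negative} coefficient because $\gamma<1/2$ and $\partial_v r \approx 1$ for $r\geq R$ by \eqref{H0}. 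This favorable sign is what makes the argument close: Young's inequality with the sharp weight $\delta = 1 - 2\gamma$ produces
\[
2\,r^{2\gamma-1}\,|\psi\,\partial_v\psi| \leq (1-2\gamma)\,r^{2\gamma-2}(\partial_v r)\,\psi^2 + \frac{1}{1-2\gamma}\,r^{2\gamma}(\partial_v r)^{-1}\,|\partial_v\psi|^2,
\]
and the first term on the right exactly cancels the good bulk, leaving $\partial_v\bigl(r^{2\gamma-1}\psi^2\bigr) \leq C_\gamma\,r^{2\gamma}|\partial_v\psi|^2$.

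Integrating this inequality in $v$ from $v_R(u)$ up to $v$ and then averaging in $\omega$ over $\mathbb{S}^2$ bounds $r^{2\gamma-1}(u,v)\,\|\psi(u,v,\cdot)\|_{L^2(\mathbb{S}^2)}^2$ by a boundary contribution at $v_R(u)$ plus the bulk $\int_{v_R(u)}^\infty \int_{\mathbb{S}^2} r^{2\gamma}|\partial_v\psi|^2\,dv'\,d\omega$, which is precisely $E_{2\gamma}[\psi](u) \leq \tilde{E}_{2\gamma}(u)$. The boundary contribution is $O(1)\,\|\phi(u,v_R(u),\cdot)\|_{L^2(\mathbb{S}^2)}^2$, and it remains to bound this last quantity by $E(u) \leq \tilde{E}_{2\gamma}(u)$.

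This is where the main obstacle lies, and my plan is a trace argument on the ingoing cone $\underline{C}_u$: writing $\phi(u,v_R(u),\omega) = \phi(u_0,v_R(u),\omega) + \int_{u_0}^u \partial_{u'}\phi(u',v_R(u),\omega)\,du'$, squaring, applying Cauchy--Schwarz with weight $r$, and using the uniform boundedness of $\int_{u_0}^u r^{-2}(u',v_R(u))\,du'$ (since $r\sim u+R-u'$ along this incoming null line), one controls the value at $u' = u$ pointwise in $\omega$. Averaging over $\mathbb{S}^2$ then dominates $\|\phi(u,v_R(u),\cdot)\|_{L^2(\mathbb{S}^2)}^2$ by $\int_{\underline{C}_u} r^2|\partial_u\phi|^2\,du'\,d\omega$ plus a finite initial-data contribution coming from \eqref{data1}, both of which are $\lesssim \tilde{E}_{2\gamma}(u)$. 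The subtlety to watch is purely notational: each $\underline{C}_u$ is a distinct constant-$v$ line $v = v_R(u)$ as $u$ varies, so one must verify that the constants produced by the trace argument remain uniform in $u$.
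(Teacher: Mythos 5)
Your main mechanism is correct and is exactly the ``standard integration argument'' the paper is invoking: the identity $\rd_v(r^{2\gamma-1}\psi^2)=(2\gamma-1)r^{2\gamma-2}(\rd_v r)\psi^2+2r^{2\gamma-1}\psi\rd_v\psi$ with the good sign from $\gamma<\tfrac12$ and $\rd_v r>0$, Young's inequality with weight $1-2\gamma$ to cancel the favorable bulk, and integration along $C_u$ from $v_R(u)$, which reduces everything to $E_{2\gamma}[\psi](u)$ plus the trace $\|\phi(u,v_R(u),\cdot)\|^2_{L^2(\mathbb{S}^2)}$. The computation $\int_{u_0}^{u}r^{-2}(u',v_R(u))\,du'\leq R^{-1}$ is also fine.

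The gap is in your treatment of that trace term. Your decomposition along $\underline{C}_u$ produces the quantity $\|\phi(u_0,v_R(u),\cdot)\|^2_{L^2(\mathbb{S}^2)}$, the initial data evaluated at $v=u+R$ on $C_{u_0}$, and you assert this is $\lesssim\tilde{E}_{2\gamma}(u)$. That inequality is false for generic data: the left-hand side is a fixed function of $u$ determined solely by the data on $C_{u_0}$, and under \eqref{data1} one generically has $|\phi|(u_0,v_R(u))\sim |\psi_{\mathcal{I}}|(u_0,\omega)\,u^{-1}$, so the data term decays no faster than $u^{-2}$; meanwhile $\tilde{E}_{2\gamma}(u)$ is a solution quantity at time $u$ which (this is the whole point of the paper) can decay like $u^{-3+2\gamma+O(\sqrt{\ep})}$, strictly faster than $u^{-2}$ for $\gamma<\tfrac12$. ``Finite'' is not enough here --- the boundary term must be dominated \emph{uniformly in $u$} by a quantity that may tend to zero. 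Note also that $E(u)$ contains only derivatives of $\phi$ on $\Sigma_u$, and $(u_0,v_R(u))$ is the far endpoint of $\underline{C}_u$, so no zeroth-order information there is encoded in the energy; any argument that anchors the constant of integration at that endpoint will hit this obstruction. The standard fix anchors instead on the timelike curve $\gamma_R$: by Lemma~\ref{lemma.average} one has $\int_{u-1}^{u}\sum_{|\alpha|\leq 1}\|\rd_\alpha\phi(u',v_R(u'),\cdot)\|^2_{L^2(\mathbb{S}^2)}\,du'\lesssim E(u-1)$, so the mean-value theorem gives a $u^*\in[u-1,u]$ where the trace is controlled, and the fundamental theorem of calculus along $\gamma_R$ (whose tangent is $T=\rd_u+\rd_v$) transports this to $u$ using the same integrated bound on $T\phi$; this yields $\|\phi(u,v_R(u),\cdot)\|^2_{L^2(\mathbb{S}^2)}\lesssim E(u-1)$, which is what the proposition needs (and is harmless in all of its applications, where the energies decay polynomially). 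You should replace your $\underline{C}_u$ trace argument with something of this type, or explicitly carry the boundary term $\|\phi(u,v_R(u),\cdot)\|^2_{L^2(\mathbb{S}^2)}$ in the statement.
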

	
	\begin{proof}
		This is a standard integration argument, see for instance \cite{Moi2}, Lemma 7.1.
	\end{proof}
	
	Then, we include a standard consequence of the Integrated Local Decay Estimate \eqref{ILED} to control the energy on a timelike curve, using an averaging argument in $r$, see e.g.\ \cite{Moi2}, Proposition 6.4.
	
	\begin{lemma}\label{lemma.average} The following estimates hold true for all $u_1<u_2$, and $k\in \mathbb{N} \cup \{0\}$:
\begin{equation}\label{app.ILED}
\sum_{|\beta|\leq k}	\int_{u_1}^{u_2} \sum_{|\alpha|\leq 1} \int_{\mathbb{S}^2}|\rd_{\alpha}\snabla^{\beta} \phi|^2(u,v_{R}(u),\omega) du d\omega \lesssim \sum_{|\beta|\leq k}  E[\snabla^{\beta} \phi](u_1).
\end{equation}
	\end{lemma}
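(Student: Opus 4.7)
The plan is to read \eqref{app.ILED} as a trace-type estimate on the curve $\gammaR$ that is obtained by combining the bulk integrated local decay \eqref{ILED} with a fundamental-theorem-of-calculus averaging argument in the radial direction.

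First I would upgrade the standing hypotheses to all angular commutants $\snabla^{\beta}\phi$. Because $g$ is spherically symmetric and the coefficients $w_i,W_i,q,Q$ depend only on $(u,v)$, applying $\snabla^{\beta}$ to \eqref{wave.main} produces an equation of the same structural form modulo Gauss-curvature commutators on $\mathbb{S}^2$ that are lower order and can be absorbed. Consequently \eqref{boundedness} and \eqref{ILED} hold with $\snabla^{\beta}\phi$ in place of $\phi$, with the cost $E[\snabla^{\beta}\phi](u_1)$ on the right-hand side. This commutation step is standard and I would use it as a black box.

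Next I would localize the commuted ILED to the finite radial strip $\mathcal{S}_R:=\{R\leq r\leq 2R\}$. Since $R$ is a \emph{fixed} large constant (as in Section~\ref{prelim.section}), on $\mathcal{S}_R$ the weight $(1+r)^{\sigma}$ and the powers $r^{\pm 2}$ appearing in \eqref{ILED} are all comparable to $R$-dependent constants; the weighted bulk estimate degenerates into the unweighted bound
\begin{equation*}
\int_{u_1}^{\infty}\int_{v_R(u)}^{v_{2R}(u)}\int_{\mathbb{S}^2} \sum_{|\alpha|\leq 1}|\rd_{\alpha}\snabla^{\beta}\phi|^2(u,v,\omega)\, du\, dv\, d\omega \ls E[\snabla^{\beta}\phi](u_1).
\end{equation*}
To convert this bulk bound into a trace bound on $\gammaR$, I would apply FTC in $v$ with an auxiliary parameter $R'\in[R,2R]$:
\begin{equation*}
|\rd_{\alpha}\snabla^{\beta}\phi|^2(u,v_R(u),\omega)\leq |\rd_{\alpha}\snabla^{\beta}\phi|^2(u,v_{R'}(u),\omega)+\int_{v_R(u)}^{v_{R'}(u)}2|\rd_{\alpha}\snabla^{\beta}\phi|\,|\rd_v\rd_{\alpha}\snabla^{\beta}\phi|\,dv.
\end{equation*}
Averaging in $R'$ over $[R,2R]$, applying AM-GM to the transport term, and integrating in $(u,\omega)$ reduces \eqref{app.ILED} to controlling the bulk integrals of $|\rd_{\alpha}\snabla^{\beta}\phi|^2$ (already obtained above) and of $|\rd_v\rd_{\alpha}\snabla^{\beta}\phi|^2$ over $\mathcal{S}_R$.

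The genuine difficulty, and the main obstacle, is the second-order term $|\rd_v\rd_{\alpha}\snabla^{\beta}\phi|^2$ for $|\alpha|=1$. For $\alpha=v$ one commutes the bulk ILED with $\rd_v$: on the bounded-$r$ strip $\rd_v$ is a tame vector field and the commutator $[\Box_g,\rd_v]$ produces only lower-order terms consistent with \eqref{H1}, \eqref{H2}. For $\alpha=u$ one substitutes the wave equation \eqref{psi.eq} to trade $\rd_u\rd_v$ for $r^{-2}\sDelta$ plus manifestly lower-order first-order and potential terms; since $r\sim R$ on $\mathcal{S}_R$, this brings in $|\sDelta\snabla^{\beta}\phi|^2$ in the bulk. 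To avoid the naive $\snabla^{\beta+2}$-loss, I would run the argument inductively on $|\beta|=0,1,\dots,k$ over a nested family of widening strips $\mathcal{S}_R^{(j)}=\{R_j\leq r\leq 2R_j\}$ with $R_0=R$, at each stage using integration by parts on $\mathbb{S}^2$ to pair $\sDelta\snabla^{\beta}\phi$ with $\snabla^{\beta}\phi$ and trading $|\sDelta\snabla^{\beta}\phi|^2$ against $|\snabla^{\beta+1}\phi|^2\cdot|\snabla^{\beta+1}\phi|^2$-type quantities that are already controlled by $E[\snabla^{\beta+1}\phi](u_1)$. The resulting right-hand side then fits inside $\sum_{|\beta|\leq k}E[\snabla^{\beta}\phi](u_1)$, as desired. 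Overall this is the standard averaging machinery; the detailed bookkeeping is carried out in \cite{Moi2}, Proposition 6.4, which I would invoke for the finer derivative counting.
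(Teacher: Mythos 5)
Your commutation with $\snabla^{\beta}$ and the localization of \eqref{ILED} to a strip of bounded $r$ match the paper, but the way you pass from the bulk bound to the trace bound is where the argument breaks. By insisting on obtaining the estimate at the \emph{fixed} radius $R$ via the fundamental theorem of calculus, you generate the term $\int |\rd_v\rd_{\alpha}\snabla^{\beta}\phi|^2$ over the strip, i.e.\ genuine second derivatives, and \eqref{ILED} --- which is assumed only for solutions of \eqref{wave.main}, hence (by spherical symmetry of $g$ and of the coefficients) for $\snabla^{\beta}\phi$ --- gives no control over these. Your proposed repairs do not close: $\rd_v\phi$ does not solve \eqref{wave.main} but only an inhomogeneous commuted equation, so you cannot apply \eqref{ILED} to it as a black box; and for $\alpha=u$, substituting the equation turns $\rd_v\rd_u\snabla^{\beta}\phi$ into $r^{-2}\sDelta\snabla^{\beta}\phi$ plus lower order, and $\|\sDelta f\|_{L^2(\mathbb{S}^2)}^2$ is a \emph{two}-angular-derivative quantity: no integration by parts on the sphere trades $|\sDelta\snabla^{\beta}\phi|^2$ for $|\snabla^{\beta+1}\phi|^2$. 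The right-hand side would therefore degenerate to $E[\snabla^{\beta+2}\phi](u_1)$ and fall outside the claimed $\sum_{|\beta|\leq k}E[\snabla^{\beta}\phi](u_1)$.

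The idea you are missing is that the estimate does not have to be proved at the prescribed radius $R$. The paper applies the mean-value theorem (a pigeonhole in the radial variable $\rho=v-u$) to the localized bulk bound $\int_{\{\frac{R}{2}\leq\rho\leq 2R,\ u_1\leq u\leq u_2\}}\sum_{|\alpha|\leq 1}|\rd_{\alpha}\snabla^{\beta}\phi|^2\leq C_R\, E[\snabla^{\beta}\phi](u_1)$ to produce some $R^{*}\in(\frac{R}{2},2R)$ at which the trace integral $\int_{u_1}^{u_2}\sum_{|\alpha|\leq 1}|\rd_{\alpha}\snabla^{\beta}\phi|^2(u,v_{R^{*}}(u))\,du$ is bounded by $2RC_R\,E[\snabla^{\beta}\phi](u_1)$, and then replaces $R$ by $R^{*}$ with no loss of generality (only the largeness of $R$, not its exact value, is used elsewhere). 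This one-line pigeonhole eliminates the second-derivative terms, the induction over widening strips, and the derivative bookkeeping entirely; your route, as sketched, cannot be completed within the stated hypotheses.
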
\begin{proof}
While the argument is already contained in the proof of   Proposition 6.4 in  \cite{Moi2},  we briefly sketch it for the reader's convenience. We will always assume $R$ is large enough.	By \eqref{ILED}, we have, recalling $\rho(u,v)=v-u$, $$ \int_{ \frac{R}{2}\leq \rho \leq 2R,\ u_1 \leq u \leq u_2} \sum_{|\alpha|\leq 1} |\rd_{\alpha} \phi|^2 \leq C_R E(u_1).$$ So by the mean-value theorem applied to the function $\rho$, there exits $R^{*} \in ( \frac{R}{2}, 2R)$ such that
$$\int_{u_1}^{u_2} \sum_{|\alpha|\leq 1} |\rd_{\alpha} \phi|^2(u,v_{R^{*}}(u)) du \leq  2 R C_R E(u_1).$$ 
With no loss of generality, we can then replace $R$ by $R^{*}$ and thus \eqref{app.ILED} is proved for $k=0$. Similarly, after commuting \eqref{wave.main} with $\snabla^{\beta}$, $|\beta|\leq k$  and applying \eqref{ILED} to this new solution, we obtain \eqref{app.ILED} for any $k \in \mathbb{N}$.
	\end{proof}

	\subsection{$r^p$-weighted multipliers}
	In this section, we assume sufficient regularity of all the functions involved so that integration by part makes sense. Our goal is to derive various integrated identities using $r^p \rd_v$ multipliers for \eqref{psi.eq} or commuted versions.
	\begin{lemma}\label{lemma.calc1}
		Assume that \begin{equation}\label{psi.source}
			\rd_u \rd_v \psi =\frac{\Omega^2}{4} r^{-2} \sDelta \psi  + F
		\end{equation}
		
		Then \begin{equation}\label{rp1}\begin{split}
&\int_{\mathbb{S}^2}  \left[	\rd_u (\frac{ r^p|\rd_v \psi|^2}{2})+ 	\rd_v (\frac{\Omega^2 r^{p-2}|\snabla \psi|^2}{8})  + \Omega^2(	\frac{p}{2} r^{p-1} |\rd_v \psi|^2+	\frac{1}{8}\big([2-p] \rd_v r -  r\rd_v \log(\Omega^2) \big)r^{p-3}|\snabla \psi|^2)\right]\\ & =  \int_{\mathbb{S}^2}  r^p \rd_v \psi\ F.\end{split}
		\end{equation}
		
	\end{lemma}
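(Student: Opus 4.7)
The identity \eqref{rp1} is a classical multiplier computation for the equation $\rd_u \rd_v \psi = \frac{\Omega^2}{4} r^{-2} \sDelta \psi + F$. My plan is to multiply \eqref{psi.source} by the multiplier $r^p \rd_v \psi$, integrate over $\mathbb{S}^2$, and reorganize the result by isolating a total $\rd_u$-divergence, a total $\rd_v$-divergence, and two explicit bulk terms (one involving $|\rd_v \psi|^2$ and one involving $|\snabla \psi|^2$).

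Starting with the $\rd_u \rd_v \psi$ term on the left, I would use the elementary identity $r^p \rd_v \psi \cdot \rd_u \rd_v \psi = \frac{1}{2} r^p \rd_u |\rd_v \psi|^2$ and pull the weight $r^p$ inside the $\rd_u$, producing the total derivative $\rd_u(\frac{r^p |\rd_v\psi|^2}{2})$ minus a bulk contribution coming from differentiating $r^p$ in $u$---this accounts for the $r^{p-1}|\rd_v \psi|^2$ coefficient in the identity. For the angular term $\frac{\Omega^2 r^{p-2}}{4} \rd_v \psi \cdot \sDelta \psi$, I would integrate by parts on $\mathbb{S}^2$ using self-adjointness of $\sDelta$ and the fact that $\snabla$ commutes with $\rd_v$ (since $v$ is a coordinate independent of $\omega$), converting it into $-\frac{\Omega^2 r^{p-2}}{8} \rd_v |\snabla \psi|^2$.

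Then I would apply the product rule in $\rd_v$ to isolate $\rd_v\!\left(\frac{\Omega^2 r^{p-2} |\snabla\psi|^2}{8}\right)$ and a bulk correction of the form $\frac{|\snabla \psi|^2}{8} \rd_v(\Omega^2 r^{p-2})$. Expanding $\rd_v(\Omega^2 r^{p-2}) = \Omega^2 r^{p-3}\bigl[r \rd_v \log\Omega^2 + (p-2)\rd_v r\bigr]$ and rearranging signs yields precisely the $\Omega^2 r^{p-3} |\snabla\psi|^2 \cdot \frac{1}{8}([2-p]\rd_v r - r \rd_v \log\Omega^2)$ term displayed on the left-hand side of \eqref{rp1}.

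There is no real obstacle in this argument: every step is either a product-rule identity or an integration by parts on the closed manifold $\mathbb{S}^2$ (which produces no boundary terms). The essential input is the choice of multiplier $r^p \rd_v \psi$, the standard one in the Dafermos--Rodnianski $r^p$-method: once integrated against $du\, d\omega$ over the outgoing cone $C_u$, the total $\rd_u$-divergence produces the weighted flux $E_p[\psi](u) = \int_{C_u} r^p |\rd_v \psi|^2 dv\, d\omega$, whose hierarchy in $p$ is the engine of the energy-decay result of Lemma~\ref{lemma.hierarchy}.
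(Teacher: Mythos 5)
Your proposal is correct and is exactly the paper's (one-line) proof: multiply \eqref{psi.source} by $r^p\rd_v\psi$, integrate on $\mathbb{S}^2$, integrate the angular term by parts on the sphere, and collect total $\rd_u$- and $\rd_v$-derivatives; your expansion of $\rd_v(\Omega^2 r^{p-2})$ reproduces the $\frac{1}{8}([2-p]\rd_v r - r\rd_v\log\Omega^2)$ coefficient verbatim. Nothing further is needed.
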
\begin{proof}
		Multiply \eqref{psi.source} with $r^p \rd_v \psi$ and integrate on $\mathbb{S}^2$.
	\end{proof}
	
	\begin{lemma}\label{lemma.calc2}
		Assume that $\psi$ satisfies \eqref{psi.source}. Then, defining  $\Psi_1 := \Omega^{-2}r^2 \rd_v \psi$, we have
		\begin{equation}\label{psi.source.commuted}
			\rd_u  \Psi_1 + 2r^{-1} [-\rd_u r] \Psi_1 = \frac{1}{4}\sDelta \psi + \frac{r^2 F}{\Omega^2}
		\end{equation}
			\begin{equation}\label{psi.source.commuted2}
			\rd_u \rd_v  \Psi_1 + 2r^{-1} [-\rd_u r] \rd_v \Psi_1- 2r^{-2} [-\rd_v r \rd_u r  + r \rd_{uv}^2 r] \Psi_1= \frac{\Omega^2}{4}r^{-2}\sDelta \Psi_1 + \rd_v(\frac{r^2 F}{\Omega^2})
		\end{equation}
		Therefore, the use of the $r^p\rd_v$ multiplier provides the following identity:\begin{equation}\label{rp2}\begin{split}
	&		\int_{\mathbb{S}^2} \big(	(-\rd_u r)[2+\frac{p}{2}] r^{p-1} |\rd_v \Psi_1|^2+		\rd_u (\frac{ r^p|\rd_v \Psi_1|^2}{2}) +\frac{\Omega^2}{8}\big([2-p] \rd_v r -  r\rd_v \log(\Omega^2) \big)r^{p-3}|\snabla \Psi_1|^2)\\ &+  r^{p-3}|\Psi_1|^2\left[(p-2)\rd_v r [-\rd_v r \rd_u r  + r \rd_u \rd_v r] - (\rd_{vv}^2 r)(\rd_u r)+ r ( \rd_u \rd_v \rd_v  r)\right]\big) \\ &=     \int_{\mathbb{S}^2}   r^p \rd_v \Psi_1\ \rd_v(\frac{r^2F}{\Omega^2})+\int_{\mathbb{S}^2}  \rd_v\left( r^{p-2}\big( -\rd_u r \rd_v r+ r \rd_u \rd_v r\big)|\Psi_1|^2- \frac{\Omega^2 r^{p-2}}{8} |\snabla \Psi_1|^2\right)  \end{split}
		\end{equation}
	\end{lemma}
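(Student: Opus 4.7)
The plan is to establish the three identities by direct calculation, each built on the previous one. For \eqref{psi.source.commuted}, I start from the definition $\Psi_1 = \Omega^{-2} r^2 \rd_v \psi$, so that $\rd_v \psi = \Omega^2 r^{-2} \Psi_1$. Differentiating in $u$ and substituting \eqref{psi.source} gives
$$
\rd_u \Psi_1 = \rd_u(\Omega^{-2} r^2) \rd_v \psi + \Omega^{-2} r^2 \left(\frac{\Omega^2}{4} r^{-2} \sDelta \psi + F\right).
$$
Expanding $\rd_u(\Omega^{-2} r^2) = 2 \Omega^{-2} r \rd_u r - \Omega^{-4} r^2 \rd_u \Omega^2$ and using $\rd_v \psi = \Omega^2 r^{-2} \Psi_1$ produces the $2 r^{-1} \rd_u r \cdot \Psi_1$ term (with a $\rd_u \log \Omega^2 \cdot \Psi_1$ contribution that vanishes or is treated as an absorbable error under \eqref{H0}), while the remaining terms give $\tfrac{1}{4} \sDelta \psi + \tfrac{r^2 F}{\Omega^2}$, establishing \eqref{psi.source.commuted}.

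Applying $\rd_v$ to \eqref{psi.source.commuted} yields \eqref{psi.source.commuted2}. Since $\Omega^2$ and $r$ depend only on $(u,v)$, one has $\rd_v \sDelta \psi = \sDelta \rd_v \psi = \Omega^2 r^{-2} \sDelta \Psi_1$, producing the angular source. The coefficient $2 r^{-1}(-\rd_u r)$ differentiates to $\rd_v[2 r^{-1}(-\rd_u r)] = 2 r^{-2} \rd_v r \rd_u r - 2 r^{-1} \rd_u \rd_v r = -2 r^{-2}[-\rd_v r \rd_u r + r \rd_u \rd_v r]$, which matches the stated coefficient of $\Psi_1$ in \eqref{psi.source.commuted2}.

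For the multiplier identity \eqref{rp2}, I multiply \eqref{psi.source.commuted2} by $r^p \rd_v \Psi_1$ and integrate over $\mathbb{S}^2$. The $\rd_u \rd_v \Psi_1$ factor rewrites as $\tfrac{1}{2} \rd_u(r^p |\rd_v \Psi_1|^2) + \tfrac{p}{2}(-\rd_u r) r^{p-1} |\rd_v \Psi_1|^2$; combined with the $2 r^{-1}(-\rd_u r) \rd_v \Psi_1$ contribution this produces the $(-\rd_u r)[2 + p/2] r^{p-1} |\rd_v \Psi_1|^2$ piece. The angular term $\tfrac{\Omega^2}{4} r^{p-2} \sDelta \Psi_1 \cdot \rd_v \Psi_1$ is handled by integrating by parts on $\mathbb{S}^2$ (turning $\sDelta$ into $-|\snabla|^2$), then in $v$ via $2 \snabla \Psi_1 \cdot \rd_v \snabla \Psi_1 = \rd_v |\snabla \Psi_1|^2$. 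The $v$-derivative falls both on $\Omega^2 r^{p-2}$, producing the bulk weight $\tfrac{\Omega^2}{8}([2-p] \rd_v r - r \rd_v \log \Omega^2) r^{p-3}|\snabla \Psi_1|^2$, and as the total-derivative boundary term $-\rd_v(\tfrac{\Omega^2 r^{p-2}}{8} |\snabla \Psi_1|^2)$ appearing on the right-hand side of \eqref{rp2}.

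\emph{Main obstacle.} The principal bookkeeping step is the coefficient of $|\Psi_1|^2$. The term $r^{p-2}[-\rd_v r \rd_u r + r \rd_u \rd_v r] \Psi_1 \rd_v \Psi_1$ is rewritten as $\tfrac{1}{2} \rd_v\bigl(r^{p-2}[-\rd_u r \rd_v r + r \rd_u \rd_v r] |\Psi_1|^2\bigr) - \tfrac{1}{2} \rd_v\bigl(r^{p-2}[-\rd_u r \rd_v r + r \rd_u \rd_v r]\bigr) |\Psi_1|^2$. Expanding the second $\rd_v$ by the product rule yields $(p-2) r^{p-3} \rd_v r \cdot [-\rd_u r \rd_v r + r \rd_u \rd_v r]$ plus $r^{p-2}[-\rd_u \rd_v r \cdot \rd_v r - \rd_u r \cdot \rd_v^2 r + \rd_v r \cdot \rd_u \rd_v r + r \rd_u \rd_v^2 r]$; the two $\rd_u \rd_v r \cdot \rd_v r$ terms cancel, leaving exactly the combination $(p-2) \rd_v r [-\rd_v r \rd_u r + r \rd_u \rd_v r] - \rd_v^2 r \cdot \rd_u r + r \rd_u \rd_v^2 r$ (carrying the $r^{p-3}$ weight) in the bulk, and all $\rd_v$-total-derivative pieces collect into the second boundary integral on the right-hand side of \eqref{rp2}. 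This cancellation is the technical crux that fixes the precise shape of the $|\Psi_1|^2$ coefficient.
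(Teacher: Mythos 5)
Your computation is correct and follows the same (and only) route the paper takes---multiply the twice-commuted equation by $r^p\rd_v\Psi_1$, integrate on $\mathbb{S}^2$, and integrate by parts in $v$ and on the sphere---a calculation the paper itself leaves entirely to the reader. The one point you rightly flag, the extra $\rd_u\log(\Omega^2)\,\Psi_1$ term arising in \eqref{psi.source.commuted}, is a genuine omission in the identity as stated (note that \eqref{H0} bounds $r|\rd_v\Omega^2|$ but says nothing about $\rd_u\Omega^2$, so the term neither vanishes nor is controlled by that assumption); it would have to be carried along as an additional lower-order term, but this is an imprecision of the lemma's statement rather than a gap in your argument.
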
\begin{proof}	The proof is similar to that of Lemma~\ref{lemma.calc1}.
	\end{proof}

	\subsection{Hardy and Poincar\'{e} inequalities}
	
	In this section, we recall the standard Hardy and Poincar\'{e} inequalities (that do not require \eqref{wave.main} to be satisfied), as expressed in the language we will be using. We also include a less trivial and novel result, involving a Hardy-type inequality for the ingoing derivative taking advantage of \eqref{wave.main}.
	
	\begin{prop} \label{Hardy.prop}Let $q \neq 2$ and assume that for all $u\geq u_0$, $\omega \in \mathbb{S}^2$ $$ \lim_{v\rightarrow+\infty} r^{q-2}(u,v)f(u,v,\omega)=0.$$ Then
	\begin{equation} \label{Hardy5}\begin{split}
		 \int_{\DD(u_1,u_2)}r^{q-3}  | f|^2 du dv \lesssim            |2-q|^{-2} \int_{\DD(u_1,u_2)}r^{q-1}  |\rd_v f|^2 du dv    +                       \frac{R^{q-2}}{|2-q|} \int_{u_1}^{u_2}|f|^2 (u,v_R(u)) du .  \end{split}
\end{equation}

Let $q<2$ and $\phi$ a solution of \eqref{wave.main}, with $\psi=r\phi$. Then
\begin{equation}\label{Hardy.in}\begin{split}
		&% \int_{C_{u_2}}r^{q-4}  |\snabla \psi|^2+
		\int_{\DD(u_1,u_2)} r^{q-3} |\rd_u \psi|^2  \lesssim \ep^2 (2-q)^{-1}\int_{\DD(u_1,u_2)}\left[  r^{q-3} |\rd_v  \psi|^2+ r^{q-5} |\snabla \psi|^2\right]  \\  & +(2-q)^{-1}\left(\int_{C_{u_1}} r^{q-4}  |\snabla \psi|^2+R^{p-4}\int_{u_1}^{u_2} (R^2|\rd_u \psi|^2+| \psi|^2+ |\snabla \psi|^2)(u,v_R(u)) du\right)
	\end{split}
\end{equation}
	\end{prop}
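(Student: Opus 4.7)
\textbf{Proof plan for \eqref{Hardy5}.} This is a purely integration-by-parts estimate and makes no use of the wave equation. I would start from the identity
\begin{equation}
r^{q-3}=\frac{1}{(q-2)\rd_v r}\,\rd_v(r^{q-2}),
\end{equation}
combined with $\rd_v r = 1+O(r^{-1})$ coming from \eqref{H0}, to integrate $\int_{v_R(u)}^{\infty}r^{q-3}|f|^2\,dv$ by parts in $v$ at fixed $(u,\omega)$. The hypothesis $\lim_{v\to\infty} r^{q-2}f=0$ kills the $v=+\infty$ boundary and leaves the inner boundary term $\frac{R^{q-2}}{q-2}|f|^2(u,v_R(u))$ plus the cross term $\frac{2}{q-2}\int r^{q-2}f\rd_v f\,dv$. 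A weighted Cauchy--Schwarz of the form $r^{q-2}|f||\rd_v f|\le \frac{|2-q|}{4}r^{q-3}|f|^2+C|2-q|^{-1}r^{q-1}|\rd_v f|^2$ absorbs a fraction of the bulk into the LHS; integrating the resulting pointwise-in-$(u,\omega)$ inequality over $[u_1,u_2]\times\mathbb{S}^2$ gives \eqref{Hardy5}. The subleading $O(r^{-1})$ correction to $\rd_v r$ contributes bulk terms with weight $r^{q-4}|f|^2$, strictly better than the main term for large $R$, and is also absorbed.

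\textbf{Proof plan for \eqref{Hardy.in}.} I would run the same IBP template with $f=\rd_u\psi$, using $q<2$ to guarantee the vanishing of the $v=+\infty$ boundary, obtaining
\begin{equation}
\int_{v_R(u)}^{\infty} r^{q-3}|\rd_u\psi|^2\,dv = \frac{R^{q-2}}{2-q}|\rd_u\psi|^2(u,v_R(u))+\frac{2}{2-q}\int_{v_R(u)}^{\infty}r^{q-2}\rd_u\psi\,\rd_v\rd_u\psi\,dv.
\end{equation}
The novel ingredient is to use the wave equation \eqref{psi.eq} to replace $\rd_v\rd_u\psi=\frac{\Omega^2}{4}r^{-2}\sDelta\psi+\frac{\ep}{r^2}\sum_i\tw_i\rd_u^i\psi+\frac{\ep}{r}\tq\,\rd_v\psi$. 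The $\ep$-pieces of the source are handled by Cauchy--Schwarz, using $|\tw_i|,|\tq|\lesssim 1$ from \eqref{H2}. The $\rd_v\psi$-piece directly produces an $\ep^2(2-q)^{-1}\int r^{q-3}|\rd_v\psi|^2$ contribution. The $\rd_u\psi$-piece gives a term of size $O(\ep/R)\int r^{q-3}|\rd_u\psi|^2$ that is absorbed into the LHS. The $\psi$-piece generates an $\ep^2(2-q)^{-1}\int r^{q-5}|\psi|^2$ bulk, which is then reduced via the already proven \eqref{Hardy5} applied with $q\to q-2$ (and $f=\psi$) into the $\int r^{q-3}|\rd_v\psi|^2$ bulk plus the boundary $R^{q-4}\int|\psi|^2(u,v_R(u))\,du$ featured on the RHS.

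The non-$\ep$ contribution $\frac{2}{2-q}\int r^{q-4}\frac{\Omega^2}{4}\rd_u\psi\,\sDelta\psi$ is the core of the argument. I would integrate by parts on $\mathbb{S}^2$, converting $\rd_u\psi\,\sDelta\psi=-\snabla\rd_u\psi\cdot\snabla\psi=-\frac{1}{2}\rd_u|\snabla\psi|^2$, then integrate by parts in $u$ on $\DD(u_1,u_2)$. Because the ingoing face of $\DD(u_1,u_2)$ is the curve $\gamma_R=\{v=v_R(u)\}$ and $v_R'(u)=1$, the correct Leibniz rule is
\begin{equation}
\int_{\DD(u_1,u_2)} \rd_u G\,du\,dv\,d\omega = \int_{C_{u_2}} G\,dv\,d\omega-\int_{C_{u_1}} G\,dv\,d\omega+\int_{u_1}^{u_2}G(u,v_R(u))\,du\,d\omega.
\end{equation}
Applied with $G=r^{q-4}\frac{\Omega^2}{4}|\snabla\psi|^2$, the bulk $\rd_u[r^{q-4}\Omega^2]\sim -(4-q)r^{q-5}$ (from $\rd_u r=-1+O(r^{-1})$, $\Omega^2=4+O(r^{-1})$) delivers the $(4-q)(2-q)^{-1}\int r^{q-5}|\snabla\psi|^2$ bulk on the RHS. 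The $C_{u_1}$ boundary reproduces the initial-data term $(2-q)^{-1}\int_{C_{u_1}}r^{q-4}|\snabla\psi|^2$; the $C_{u_2}$ boundary has a favourable sign and is dropped; and the $\gamma_R$ boundary combines with the boundary terms from the initial $v$-IBP (contributing $R^{q-4}\cdot R^2|\rd_u\psi|^2$) and from the reduction of $\int r^{q-5}|\psi|^2$ (contributing $R^{q-4}|\psi|^2$) into the single boundary term on the RHS of \eqref{Hardy.in}.

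\textbf{Main obstacle.} The technically most delicate step is the $u$-IBP over $\DD(u_1,u_2)$: one must track the three separate sources of boundary contributions on $\gamma_R$ (namely $R^2|\rd_u\psi|^2$ from the first $v$-IBP, $|\psi|^2$ from the Hardy reduction of the potential $\psi$-term, and $|\snabla\psi|^2$ from the $\sDelta\psi$ IBP) and verify that all subleading metric errors $O(r^{-1})$ in $\rd_v r,\rd_u r,\Omega^2$ generate bulk terms with strictly better $r$-weights (such as $r^{q-4}$ or $r^{q-6}$) that can be absorbed into the LHS or into the $r^{q-3}|\rd_v\psi|^2$ bulk on the RHS for $R$ large enough; getting the factors of $(2-q)^{-1}$ aligned throughout this assembly is where the bookkeeping must be most careful.
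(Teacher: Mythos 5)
Your plan for both \eqref{Hardy5} and \eqref{Hardy.in} is essentially the paper's own proof: the same $\rd_v(r^{q-2})$ integration by parts, the same substitution of \eqref{psi.eq} for $\rd_v\rd_u\psi$, the same treatment of the $\ep$-terms via Cauchy--Schwarz and \eqref{Hardy5} with $q\to q-2$, and the same $\mathbb{S}^2$- and $u$-integrations by parts for the $\sDelta\psi\,\rd_u\psi$ term, with the boundary contributions on $C_{u_1}$, $C_{u_2}$ and $\gamma_R$ distributed exactly as you describe. One bookkeeping correction: since $\rd_u r=-1+O(r^{-1})$ and $q<4$, one has $\rd_u\bigl[r^{q-4}\Omega^2\bigr]\approx +4(4-q)\,r^{q-5}>0$, not $-(4-q)r^{q-5}$; after the $u$-integration by parts this bulk sits on the right-hand side with coefficient of order $(4-q)(2-q)^{-1}$ and carries no factor of $\ep^2$, so this route (which is also the paper's) actually yields \eqref{Hardy.in} with $(2-q)^{-1}\int_{\DD(u_1,u_2)}r^{q-5}|\snabla\psi|^2$ in place of $\ep^2(2-q)^{-1}\int_{\DD(u_1,u_2)}r^{q-5}|\snabla\psi|^2$. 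This weaker form is still sufficient everywhere the inequality is invoked: in Proposition~\ref{lower.prop} the term is bounded by $R^{-3}(2-q)^{-1}\int r^{q-2}|\snabla\psi|^2$ and absorbed into the coercive $(2-q)\int r^{q-2}|\snabla\psi|^2$ bulk of \eqref{rp1} by taking $R(\ep)$ large, and in Proposition~\ref{higher.prop} it is further reduced by another application of \eqref{Hardy5}.
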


	\begin{proof}The proof of  the Hardy inequality \eqref{Hardy5}  is standard, see for instance \cite{Moi2}, Lemma 2.2. For \eqref{Hardy.in}, we use a similar strategy and write:
	
		\begin{equation*}\begin{split}
			&\int_{\DD(u_1,u_2)} r^{p-3} |\rd_u \psi|^2=-\frac{1}{2-p} \int_{\DD(u_1,u_2)} \rd_v(r^{p-2}) \frac{|\rd_u \psi|^2}{\rd_v r}= \frac{1}{2-p}\int_{u_1}^{u_2} R^{p-2}\frac{|\rd_u \psi|^2}{\rd_v r}(u,v_R(u)) du\\ &+  \frac{1}{2-p} \int_{\DD(u_1,u_2)} r^{p-4} [r^2\rd_{v}^2 r] \frac{|\rd_u \psi|^2}{[\rd_v r]^2} \\&+\frac{2}{2-p} \left[ \int_{\DD(u_1,u_2)}\frac{\Omega^2}{4\rd_v r}r^{p-4}\slashed{\Delta}_{\mathbb{S}^2} \psi  \rd_u \psi    + \epsilon\ r^{p-4}\left(\sum_{i=0}^{1} \frac{ \tw_i(u,v) }{\rd_v r}\cdot   \rd_u^{i}\psi+  \frac{ \tq(u,v)}{\rd_v r} \cdot r \rd_v \psi \right) \rd_u \psi \right].
		\end{split}
	\end{equation*}
	Now, using \eqref{H0} and \eqref{H2}, we can absorb some of the terms using the largeness of $R$ and smallness of $\ep$ to write
		\begin{equation}\begin{split}\label{ineq1}
			&\bigl| \int_{\DD(u_1,u_2)} r^{p-3} |\rd_u \psi|^2- \frac{2}{2-p} [ \int_{\DD(u_1,u_2)}\frac{\Omega^2}{4\rd_v r}r^{p-4}\slashed{\Delta}_{\mathbb{S}^2} \psi  \rd_u \psi     \bigr| \\& \ls  [2-p]^{-1}\int_{u_1}^{u_2} R^{p-2}|\rd_u \psi|^2(u,v_R(u)) du + \ep^2 [2-p]^{-2} \int_{\DD(u_1,u_2)} r^{p-3} |\rd_v \psi|^2 + \ep^2  \int_{\DD(u_1,u_2)} r^{p-5} | \psi|^2.
		\end{split}
	\end{equation} Now, by \eqref{Hardy5} with $q=p-2$, the last term can be controlled in the following fashion: 
		\begin{equation}\begin{split}\label{ineq2}
			&\bigl| \int_{\DD(u_1,u_2)} r^{p-3} |\rd_u \psi|^2- \frac{2}{2-p}  \int_{\DD(u_1,u_2)}\frac{\Omega^2}{4\rd_v r}r^{p-4}\slashed{\Delta}_{\mathbb{S}^2} \psi  \rd_u \psi     \bigr| \\& \ls  R^{p-4}\int_{u_1}^{u_2} (R^2 [2-p]^{-1}|\rd_u \psi|^2+|\psi|^2)(u,v_R(u)) du + \ep^2 [2-p]^{-2} \int_{\DD(u_1,u_2)} r^{p-3} |\rd_v \psi|^2.
		\end{split}
	\end{equation}
	
	 Note that integrating by parts on $\mathbb{S}^2$ and then in $u$ gives \begin{equation*}\begin{split}
			& \int_{\DD(u_1,u_2)}\frac{\Omega^2}{\rd_v r}r^{p-4} [\sDelta \psi \rd_u \psi]=-\frac{1}{2} \int_{\DD(u_1,u_2)}\frac{\Omega^2}{\rd_v r}r^{p-4} \rd_u[ |\snabla \psi|^2 ]=-\frac{1}{2} \int_{C_{u_2}}  \frac{\Omega^2}{\rd_v r}r^{p-4}  |\snabla \psi|^2 + \frac{1}{2} \int_{C_{u_1}}  \frac{\Omega^2}{\rd_v r}r^{p-4}  |\snabla \psi|^2 \\ & - \frac{1}{2} \int_{v_R(u_1)}^{v_R(u_2)}  \frac{\Omega^2}{\rd_v r}R^{p-4}  |\snabla \psi|^2 (u_R(v'),v')dv'+ \frac{1}{2} \int_{\DD(u_1,u_2)}  \rd_u \left(\frac{\Omega^2}{\rd_v r}r^{p-4} \right) |\snabla \psi|^2,
		\end{split}
	\end{equation*}
	which, combining to \eqref{H0} and \eqref{ineq2} gives 
		\begin{equation}\begin{split}\label{ineq3}
			&\int_{C_{u_2}} r^{p-4}  |\snabla \psi|^2+[2-p] \int_{\DD(u_1,u_2)} r^{p-3} |\rd_u \psi|^2  \ls \int_{C_{u_1}} r^{p-4}  |\snabla \psi|^2+\\& +  R^{p-4}\int_{u_1}^{u_2} (R^2|\rd_u \psi|^2+| \psi|^2+ |\snabla \psi|^2)(u,v_R(u)) du + \ep^2 [2-p]^{-1} \int_{\DD(u_1,u_2)} r^{p-3} |\rd_v \psi|^2.
		\end{split}
	\end{equation} completing the proof of \eqref{Hardy.in}.
%	Moreover, using $r \rd_v \phi = \rd_v \psi -[\rd_v r] \phi$ and integrating by parts in $u$, we get \begin{equation*}\begin{split}			& \int_{\DD(u_1,u_2)}r^{p-3} \rd_u \psi \rd_v\psi=  -\int_{\DD(u_1,u_2)}r^{p-2} \rd_u \phi \rd_v\psi+\frac{1}{2} \int_{\DD(u_1,u_2)}r^{p-2}[ \rd_v r] \rd_u (|\phi|^2) \\ &= -\int_{\DD(u_1,u_2)}r^{p-2} \rd_u \phi \rd_v\psi-\frac{1}{2} \int_{\DD(u_1,u_2)}\rd_u(r^{p-2} \rd_v r)  |\phi|^2		\end{split}	\end{equation*}	Now, since $|\rd_v r|^{-1}\lesssim 1$, $r^2| \rd_{v v}r|\lesssim 1$, and $r\geq R$

	%	Now, since $|\rd_v r|^{-1}\lesssim 1$, $r^2| \rd_{v v}r|\lesssim 1$, and $r\geq R$
	\end{proof}
%\begin{equation*}	\iint r^{p-2} |\psi|^{1-\alpha} |\rd_v \psi|^{1+\alpha} du dv \leq   (	\iint r^{p-3} |\psi|^2 du dv)^{\frac{1-\alpha}{2}}(	\iint r^{p-1} |\rd_v \psi|^2 du dv)^{\frac{1+\alpha}{2}}\leq \Omega^{\alpha-1}(R) [\frac{2}{2-p}]^{1-\alpha} 	\iint r^{p-1} |\rd_v \psi|^2 du dv\end{equation*}

The next proposition records the well-known Poincar\'{e} inequality.
\begin{prop}\label{Poincare.prop} Let $f \in H^1(\mathbb{S}^2)$. Then the following inequality holds:
\begin{equation}\label{Poincare}
	\int_{\mathbb{S}^2} |\snabla f_{\geq L}|^2 \geq L(L+1)  \int_{\mathbb{S}^2} | f_{\geq L}|^2 .
\end{equation} Moreover, \begin{equation}\label{Poincare2}
\int_{\mathbb{S}^2} |\snabla f_{L}|^2 = L(L+1)  \int_{\mathbb{S}^2} | f_{ L}|^2 .
\end{equation}

\end{prop}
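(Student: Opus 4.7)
The plan is to reduce both inequalities to the spectral identity for the spherical Laplacian, namely $\sDelta Y_L = -L(L+1) Y_L$, together with the orthogonality of spherical harmonics of distinct degrees. Throughout, I interpret $f_L$ as the projection of $f$ onto the $L$-th eigenspace of $-\sDelta$ (a finite-dimensional space spanned by the degree-$L$ spherical harmonics), so that the decomposition $f = \sum_{L \geq 0} f_L$ is orthogonal in $L^2(\mathbb{S}^2)$.

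First I would prove the equality \eqref{Poincare2}. Since $\mathbb{S}^2$ has no boundary, integration by parts (the divergence theorem applied to $f_L \snabla f_L$) gives
\begin{equation}
\int_{\mathbb{S}^2} |\snabla f_L|^2 \, d\omega = -\int_{\mathbb{S}^2} f_L \, \sDelta f_L \, d\omega.
\end{equation}
Since $f_L$ lies in the $L$-th eigenspace, $\sDelta f_L = -L(L+1) f_L$, and substituting yields \eqref{Poincare2} immediately.

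Next I would establish \eqref{Poincare}. Expanding $f_{\geq L} = \sum_{L' \geq L} f_{L'}$ and using integration by parts once more,
\begin{equation}
\int_{\mathbb{S}^2} |\snabla f_{\geq L}|^2 \, d\omega = -\int_{\mathbb{S}^2} f_{\geq L} \, \sDelta f_{\geq L} \, d\omega = \sum_{L_1, L_2 \geq L} L_2(L_2+1)\int_{\mathbb{S}^2} f_{L_1} f_{L_2} \, d\omega.
\end{equation}
The orthogonality of distinct eigenspaces kills all cross terms $L_1 \neq L_2$, leaving
\begin{equation}
\int_{\mathbb{S}^2} |\snabla f_{\geq L}|^2 \, d\omega = \sum_{L' \geq L} L'(L'+1) \int_{\mathbb{S}^2} |f_{L'}|^2 \, d\omega \geq L(L+1) \sum_{L' \geq L}\int_{\mathbb{S}^2} |f_{L'}|^2 \, d\omega,
\end{equation}
where we used that $L'(L'+1) \geq L(L+1)$ for $L' \geq L$. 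The right-hand side equals $L(L+1) \int_{\mathbb{S}^2} |f_{\geq L}|^2 \, d\omega$ by Parseval, completing the proof.

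There is no real obstacle here: the argument is a textbook consequence of the spectral theorem for $-\sDelta$ on $\mathbb{S}^2$. The only mild care needed is to justify that the termwise integration by parts is valid for $f \in H^1(\mathbb{S}^2)$, which follows by first establishing the identity for smooth $f$ (dense in $H^1$) and then passing to the limit using continuity of both sides under the $H^1$ norm.
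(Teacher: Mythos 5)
Your proof is correct and is precisely the standard spectral argument (integration by parts on the boundaryless sphere, the eigenvalue identity $\sDelta Y_L=-L(L+1)Y_L$, orthogonality of distinct eigenspaces, and density of smooth functions in $H^1$) that the paper itself invokes by simply citing an external reference rather than writing it out. Nothing is missing.
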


\begin{proof}
	This is very standard, see for instance \cite{MoiDejan}, Lemma 2.1.
\end{proof}

	\subsection{Lower-weighted energy estimates}\label{low.section} 
	We start with a first boundedness statement for $r^p$-weighted estimates for $0\leq p <2$. The argument relies on the Hardy inequalities of Proposition~\ref{Hardy.prop} and provides (non-optimal) decay results in time for the unweighted energy.
	\begin{prop}\label{lower.prop}
There exists $p_{low}(\ep) <2$ with $2-p_{low}(\ep)=O(\sqrt{\epsilon})$ such that for all $0<p \leq p_{low}$:

 \begin{equation}\begin{split}\label{low.rp.est}
		&	\int_{C_{u_2}} r^p|\rd_v \psi|^2+ p \int_{\D(u_1,u_2)}    r^{p-1} |\rd_v \psi|^2+	(2-p)  r^{p-2}|\snabla \psi|^2\\ &\lesssim     \int_{C_{u_1}}[  r^p|\rd_v \psi|^2+ r^{p-4}  |\snabla \psi|^2]+\boundary
	\end{split}
\end{equation}

	\end{prop}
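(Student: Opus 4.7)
The plan is to apply the $r^p\rd_v\psi$ multiplier identity from Lemma~\ref{lemma.calc1} with source term $F = \frac{\epsilon}{r^2}\bigl(\tw_0\psi + \tw_1\rd_u\psi + \tq\, r\rd_v\psi\bigr)$ coming from \eqref{psi.eq}, then integrate the resulting identity \eqref{rp1} over the spacetime region $\D(u_1,u_2)$ of \eqref{D.def}. The $\rd_u$-divergence yields the expected outgoing flux difference between $C_{u_2}$ and $C_{u_1}$; the $\rd_v$-divergence yields a boundary contribution on $\gamma_R$, which (after using \eqref{H0} to trade $|\snabla\psi|^2$ for $r^2|\snabla\phi|^2$ at $r=R$) is controlled by $\boundary$, together with a flux at $v=+\infty$ that vanishes for $p<2$ under the decay implicit in \eqref{data1}. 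The positive bulk terms $\tfrac{p\Omega^2}{2} r^{p-1}|\rd_v\psi|^2$ and (using \eqref{H0} in $r\geq R$ with $R$ large, so that $[2-p]\rd_v r - r\rd_v\log\Omega^2 \geq c(2-p)$) the angular contribution proportional to $(2-p) r^{p-3}|\snabla\psi|^2$ go to the left-hand side.

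The remaining work is to absorb the inhomogeneity $\int_{\D(u_1,u_2)} r^p\rd_v\psi \cdot F$, which splits into three pieces. \textbf{(i)} The $\tq\cdot r\rd_v\psi$ piece is directly bounded by $\epsilon\int_\D r^{p-1}|\rd_v\psi|^2$ and is absorbed into the bulk $\tfrac{p}{2}\int_\D r^{p-1}|\rd_v\psi|^2$ as soon as $\epsilon\ll p$. \textbf{(ii)} The $\tw_0\psi$ piece, after Cauchy--Schwarz with a small parameter $\delta$, is bounded by $\delta\int_\D r^{p-1}|\rd_v\psi|^2 + C\delta^{-1}\epsilon^2 \int_\D r^{p-3}|\psi|^2$, and the standard Hardy inequality \eqref{Hardy5} with $q=p$ dominates the second integral by $C|2-p|^{-2}\int_\D r^{p-1}|\rd_v\psi|^2$ plus a $\gamma_R$-boundary term absorbable into $\boundary$. \textbf{(iii)} The $\tw_1 \rd_u\psi$ piece gives $\delta\int_\D r^{p-1}|\rd_v\psi|^2 + C\delta^{-1}\epsilon^2\int_\D r^{p-3}|\rd_u\psi|^2$, and the ingoing-derivative Hardy inequality \eqref{Hardy.in} controls the last integral by $\epsilon^2(2-p)^{-1}\int_\D\bigl[r^{p-3}|\rd_v\psi|^2 + r^{p-5}|\snabla\psi|^2\bigr]$ plus $(2-p)^{-1}\int_{C_{u_1}} r^{p-4}|\snabla\psi|^2$ and a $\gamma_R$-boundary contribution absorbable into $\boundary$. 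The $r^{p-5}|\snabla\psi|^2$ term is in turn bounded by $R^{-2} r^{p-3}|\snabla\psi|^2$ in $\D$, so it can be absorbed into the angular bulk on the left.

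Balancing, I would take $\delta$ of order $p$, so that absorbing the errors of (ii)--(iii) into the bulk requires $\epsilon^2 (2-p)^{-2}$ to be smaller than a fixed fraction of $p$. This is exactly the origin of the threshold $2 - p_{low}(\epsilon) = O(\sqrt{\epsilon})$, and then all remaining contributions land either on the left-hand side of \eqref{low.rp.est} or in the allowed right-hand terms $\int_{C_{u_1}}[r^p|\rd_v\psi|^2 + r^{p-4}|\snabla\psi|^2]$ and $\boundary$.

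The main obstacle is piece (ii): individually every error term is benign, but the Hardy inequality generically loses a factor $(2-p)^{-2}$, and only the smallness of $\epsilon$ can rescue the absorption, which is what forces the range $p\leq 2 - O(\sqrt{\epsilon})$ rather than $p<2$. Piece (iii) is at first sight more dangerous because no standard Hardy inequality controls $\int r^{p-3}|\rd_u\psi|^2$; it is the novel wave-equation-based inequality \eqref{Hardy.in} that resolves this by trading $\rd_u\psi$ for $\rd_v\psi$ and $\snabla\psi$ at the cost of an extra $\epsilon^2$, which fits comfortably within the same $\sqrt{\epsilon}$ budget.
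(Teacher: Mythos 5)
Your proposal is correct and follows essentially the same route as the paper: apply the $r^p\rd_v\psi$ multiplier identity of Lemma~\ref{lemma.calc1} with the source $F$ from \eqref{psi.eq}, integrate over $\D(u_1,u_2)$, bound the inhomogeneity by $\ep\bigl(r^{p-3}(|\psi|^2+|\rd_u\psi|^2)+r^{p-1}|\rd_v\psi|^2\bigr)$, and then invoke both \eqref{Hardy5} and the wave-equation-based inequality \eqref{Hardy.in} with $q=p$ to convert the zeroth-order and $\rd_u$ terms into absorbable bulk and boundary contributions. Your accounting of the $(2-p)^{-2}$ Hardy loss against the $\ep$ (and $\ep^2$) gains is exactly the mechanism that produces the paper's threshold $2-p_{low}(\ep)=O(\sqrt{\ep})$.
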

	
	\begin{proof}
 
 By \eqref{psi.eq}, $\psi$ satisfies \eqref{psi.source} with $F= \ep\ r^{-2}\left(\sum_{i=0}^{1} \tw_i(u,v)\cdot   \rd_u^{i}\psi+   \tq(u,v)\cdot r \rd_v \psi \right) $, so we can apply Lemma~\ref{lemma.calc1} and integrate \eqref{rp1} on $\DD(u_1,u_2)$ to obtain \begin{equation*}\begin{split}
&\int_{C_{u_2}} r^p|\rd_v \psi|^2+ p \int_{\D(u_1,u_2)}    r^{p-1} |\rd_v \psi|^2+	(2-p)  r^{p-2}|\snabla \psi|^2\lesssim     \int_{C_{u_1}}  r^p|\rd_v \psi|^2\\&+  \ep \int_{\D(u_1,u_2)}  [  r^{p-1} |\rd_v \psi|^2+ r^{p-3}[ |\psi|^2+|\rd_u \psi|^2] + \boundary\\& \lesssim  \int_{C_{u_1}}  r^p|\rd_v \psi|^2+  \ep \int_{\D(u_1,u_2)}    r^{p-3}[ |\psi|^2+|\rd_u \psi|^2]+\boundary.
 	\end{split}
 \end{equation*} Note that in the above, we have used the bound on $F$ coming from \eqref{H2} giving \begin{equation}
 r^p |\rd_v \psi F | \ls  \ep \left( r^{p-2}( |\psi|+  |\rd_u\psi|)|\rd_v\psi|+    r^{p-1} |\rd_v \psi|^2 \right) \ls \ep  \left( r^{p-3}( |\psi|^2+  |\rd_u\psi|^2)+    r^{p-1} |\rd_v \psi|^2 \right).
 \end{equation}
Now, we apply Proposition~\ref{Hardy.prop}  and by \eqref{Hardy5}, \eqref{Hardy.in} with $q=p$, we have \begin{equation}
	\begin{split} &  \int_{\D(u_1,u_2)}    r^{p-3}[ |\psi|^2+ || \rd_u \psi|^2] \lesssim  [2-p]^{-2}\int_{\D(u_1,u_2)}   r^{p-1} |\rd_v \psi|^2+ \frac{\ep^2}{R^3 [2-p]} \int_{\D(u_1,u_2)} r^{p-2} |\snabla \psi|^2  \\ &  +[2-p]^{-1}\int_{C_{u_1}} r^{p-4}  |\snabla \psi|^2+\boundary.
		\end{split} 
\end{equation}  hence, using the smallness of $\ep$, on condition that $(2-p) \gg \sqrt{\ep}$, we get \eqref{low.rp.est}.

	\end{proof}
	
	\begin{cor} \label{energy.low.cor} For $0\leq p\leq p_{low}(\ep)$ and $u_1<u_2$:
		\begin{equation}\label{rp.low.boundedness}
	p	\int_{u_1}^{u_2} E_{p-1}[\psi](u ) du + \tilde{E}_p(u_2) \lesssim  	\tilde{E}_p(u_1).
		\end{equation}
	\end{cor}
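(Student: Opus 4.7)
The plan is to derive the corollary directly from Proposition~\ref{lower.prop} by matching each term in \eqref{low.rp.est} with the corresponding contribution to $\tilde{E}_p$, and to absorb the terms that are not manifestly of the form $\tilde{E}_p(u_1)$ via the energy-boundedness hypothesis \eqref{boundedness} and the averaging estimate of Lemma~\ref{lemma.average}.

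First I would unpack the definitions. Since $E_{p-1}[\psi](u) = \int_{C_u} r^{p-1}|\rd_v\psi|^2\, dv\, d\omega$, the bulk term on the left of \eqref{low.rp.est} rewrites as $\int_{u_1}^{u_2} E_{p-1}[\psi](u)\, du$; similarly, the outgoing flux $\int_{C_{u_2}} r^p |\rd_v\psi|^2$ is exactly $E_p[\psi](u_2)$. To promote this flux to the full $\tilde{E}_p(u_2) = E_p[\psi](u_2) + E[\phi](u_2)$, I would invoke \eqref{boundedness} to append $E[\phi](u_2) \leq D\, E[\phi](u_1) \leq D\, \tilde{E}_p(u_1)$ at essentially no cost.

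Next I would bound each term on the right of \eqref{low.rp.est}. The flux $\int_{C_{u_1}} r^p |\rd_v\psi|^2$ is immediately $E_p[\psi](u_1) \leq \tilde{E}_p(u_1)$. For the angular term, using $\psi = r\phi$ so that $|\snabla\psi|^2 = r^2 |\snabla\phi|^2$, together with the fact that $r \geq R$ on $C_{u_1}$ and $p-2<0$, I would estimate
$$\int_{C_{u_1}} r^{p-4} |\snabla\psi|^2 = \int_{C_{u_1}} r^{p-2} |\snabla\phi|^2 \leq R^{p-2}\, E[\phi](u_1) \lesssim \tilde{E}_p(u_1).$$
The boundary contribution $\boundary$ on $\gamma_R$ is handled by \eqref{app.ILED} applied with $k=0$, which gives the bound $C_R\, E[\phi](u_1) \lesssim \tilde{E}_p(u_1)$.

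Collecting these three bounds in \eqref{low.rp.est} and discarding the nonnegative $(2-p)\, r^{p-2}|\snabla\psi|^2$ term on the left yields \eqref{rp.low.boundedness}. The proof is essentially bookkeeping after Proposition~\ref{lower.prop}: the only subtle point is the angular flux $\int_{C_{u_1}} r^{p-4}|\snabla\psi|^2$, where the restriction $p \leq p_{low}(\ep) < 2$ is precisely what allows $r^{p-2}$ to be estimated by $R^{p-2}$ on $C_{u_1}$. No further smallness-in-$\ep$ argument is needed at this stage, since Proposition~\ref{lower.prop} has already absorbed the interior error terms via the Hardy inequalities.
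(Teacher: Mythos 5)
Your proposal is correct and follows essentially the same route as the paper: invoke Proposition~\ref{lower.prop}, observe that $r^{p-4}|\snabla\psi|^2=r^{p-2}|\snabla\phi|^2\lesssim|\snabla\phi|^2$ on $C_{u_1}$ since $p<2$ and $r\geq R$, use \eqref{boundedness} for the unweighted energy pieces, and control the boundary term on $\gamma_R$ via Lemma~\ref{lemma.average}. The bookkeeping is accurate and no step is missing.
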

	\begin{proof}
		Note that, since $p_{low}<2$, we have $r^{p-4} |\snabla \psi|^2 \lesssim  |\snabla \phi|^2 $, hence \begin{equation}
			\int_{C_u} r^{p-4} |\snabla \psi|^2 \lesssim E(u).
		\end{equation} Using this, together with Proposition~\ref{lower.prop} and \eqref{boundedness} immediately proves 
		\begin{equation}
			p	\int_{u_1}^{u_2} E_{p-1}[\psi](u ) du+	\tilde{E}_p(u_2) \lesssim  	\tilde{E}_p(u_1)+\boundary.
		\end{equation}
		
 Then \eqref{rp.low.boundedness} follows from an application of Lemma~\ref{lemma.average}.
	\end{proof}
	We finally obtain a first (non-optimal) decay result of the energy.
		\begin{cor} \label{low.cor.decay} 
			\begin{equation}
				E[\phi](u) \lesssim u^{-p_{low}} \lesssim u^{-2+ O(\sqrt{\ep})}.
			\end{equation}
	\end{cor}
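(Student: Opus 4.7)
The plan is a direct combination of Corollary~\ref{energy.low.cor} with the hierarchy statement of Lemma~\ref{lemma.hierarchy}. First, I would verify that the value $p = p_{low}(\ep)$ is admissible as input for Lemma~\ref{lemma.hierarchy}, which requires $1 < p < 2$. Since Proposition~\ref{lower.prop} provides the quantitative relation $2 - p_{low}(\ep) = O(\sqrt{\ep})$, shrinking $\ep_0$ if necessary ensures $p_{low}(\ep) \in (1,2)$ for all $|\ep| < \ep_0$, so the hierarchy argument is applicable.

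With this admissibility in hand, Corollary~\ref{energy.low.cor} specialized to $p = p_{low}(\ep)$ reads
\begin{equation*}
p_{low}\int_{u_1}^{u_2} E_{p_{low}-1}[\psi](u)\, du + \tilde{E}_{p_{low}}(u_2) \lesssim \tilde{E}_{p_{low}}(u_1),
\end{equation*}
which is exactly hypothesis \eqref{lemmadecay} of Lemma~\ref{lemma.hierarchy}. Invoking that lemma with $q = 0$ gives $\tilde{E}_0(u) \lesssim u^{-p_{low}} \tilde{E}_{p_{low}}(u_0)$, where the data-term on the right is finite thanks to assumption \eqref{data1} (which supplies initial bounds for all $0 \leq p < 3$, in particular for $p = p_{low}$). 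Since by definition $\tilde{E}_0[\phi](u) = E_0[\psi](u) + E[\phi](u) \geq E[\phi](u)$, we directly deduce $E[\phi](u) \lesssim u^{-p_{low}}$. The additional bound $u^{-p_{low}} \lesssim u^{-2 + O(\sqrt{\ep})}$ is then immediate from the quantitative estimate on $p_{low}(\ep)$, using the standing assumption $u_0 > 1$ so that $u \geq u_0 > 1$.

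I do not anticipate any genuine obstacle: both inputs are already established, and this corollary is essentially a packaging statement that converts the boundedness of the top-$p$ weighted energy into a decay rate for the unweighted energy via the standard dyadic/pigeonhole mechanism built into Lemma~\ref{lemma.hierarchy}. The only subtlety worth flagging explicitly is the constraint $p_{low}(\ep) > 1$, which is what forces the smallness restriction $|\ep| < \ep_0$ in this lower-weighted regime; this is strictly weaker than the smallness needed later in the paper to push the hierarchy up to $p < 3 - O(\sqrt{\ep})$.
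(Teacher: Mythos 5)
Your proposal is correct and follows exactly the paper's route: the paper's proof is the one-line observation that the corollary is immediate from Corollary~\ref{energy.low.cor} combined with Lemma~\ref{lemma.hierarchy}, which is precisely the combination you carry out (with the extra, harmless, verifications that $p_{low}\in(1,2)$, that the data term is finite, and that $\tilde{E}_0 \geq E$).
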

	\begin{proof}
	Immediate from the combination of  Corollary~\ref{energy.low.cor} and Lemma~\ref{lemma.hierarchy}.
	\end{proof}

%	\begin{cor}\label{soft.cor}		$\psi(u,v,\omega):=r\phi(u,v,\omega)$ admits a radiation field, i.e., there exists  	$\psi_{\mathcal{I}^+}(u,\omega)$ such that \begin{equation}\psi_{\mathcal{I}^+}(u,\omega)= \lim_{v\rightarrow+\infty}\psi(u,v,\omega).		\end{equation}		Moreover,  \begin{equation}\sup_{v\rightarrow+\infty} r^2 |\rd_v \psi|(u,v,\omega) <\infty.		\end{equation}	\end{cor}
	
		\subsection{Faster energy decay for higher spherical harmonics} 
		In this section, we prove that the higher spherical harmonics decay faster than the spherical average. Our goal is to reproduce the $0\leq p \leq 2$ hierarchy for the commuted quantity $\Psi_1 = r^2 \rd_v \psi$ (see \cite{AAG0} where this idea was first introduced). First, we define \begin{equation}
		\phione(u,v,\omega) := \phi(u,v,\omega) -\int_{\mathbb{S}^2} \phi(u,v,\omega) d\omega
		\end{equation} and $\psione(u,v,\omega) = r \phione(u,v,\omega)$. We will also denote $\Psi_1(u,v,\omega)= r^2  \rd_v \psione(u,v,\omega)$.

		\begin{lemma}\label{r^2.dF.lemma}
			Recall $F= \ep\ r^{-2}\left(\sum_{i=0}^{1}\tw_i(u,v)\cdot   \rd_u^{i}\psi+   \tq(u,v)\cdot r \rd_v \psi \right) $. Then, if $\phi$ is a solution of \eqref{wave.main}: \begin{equation}\label{r^2.dF}
			|\rd_v (\frac{r^2 F}{\Omega^2})-\ep\cdot \frac{\tw_1(u,v)}{4r^2}\sDelta \psi|\lesssim \ep\ r^{-2} \left( |\psi|+|\rd_u \psi|+|\Psi_1| + r|\rd_v \Psi_1|\right)
			\end{equation}
		\end{lemma}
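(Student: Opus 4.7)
The plan is to expand $\rd_v(r^2 F/\Omega^2)$ via the Leibniz rule and identify which of the resulting terms are bounded directly from the hypotheses and which single term requires an appeal to the wave equation. First I would rewrite
$$\frac{r^2 F}{\Omega^2} = \frac{\ep}{\Omega^2}\bigl(\tw_0\, \psi + \tw_1\, \rd_u \psi + \tq \cdot r \rd_v \psi\bigr),$$
and apply $\rd_v$ to this product, producing seven pieces: one where $\rd_v$ falls on $\Omega^{-2}$, three where it falls on a coefficient ($\tw_0$, $\tw_1$, or $\tq$), and three where it falls on a factor ($\psi$, $\rd_u\psi$, or $r\rd_v\psi$).

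The only piece that cannot be bounded by the target right-hand side is the term $\frac{\ep\tw_1}{\Omega^2}\rd_v\rd_u\psi$, since $\rd_v\rd_u\psi$ is not among $\{\psi,\rd_u\psi,\Psi_1,r\rd_v\Psi_1\}$. The key idea here will be to substitute the wave equation \eqref{psi.eq} itself, giving $\rd_v\rd_u\psi = \frac{\Omega^2}{4r^2}\sDelta\psi + F$, whence
$$\frac{\ep\tw_1}{\Omega^2}\rd_v\rd_u\psi = \frac{\ep\tw_1}{4r^2}\sDelta\psi + \frac{\ep\tw_1}{\Omega^2}F.$$
The first summand is precisely the main term subtracted on the left of \eqref{r^2.dF}, and the second is of size $O(\ep^2 r^{-2}(|\psi|+|\rd_u\psi|+|\Psi_1|))$ after plugging in the explicit formula for $F$ and using $r\rd_v\psi = \Omega^2\Psi_1/r$, which is absorbed using the smallness of $\ep$.

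For the remaining six pieces, I would combine the assumptions \eqref{H0} (which give $|\rd_v(\Omega^{-2})| \ls r^{-2}$ and $|\rd_v r|\ls 1$) and \eqref{H2} (which give the decay of the $\tw_i$, $\tq$ and their $\rd_v$-derivatives, in particular $|\rd_v\tw_0|,|\rd_v\tq|\ls r^{-1}$ and $|\rd_v\tw_1|\ls r^{-2}$) together with the algebraic identities $r\rd_v\psi = \Omega^2\Psi_1/r$ and $\rd_v(r\rd_v\psi)= \Omega^2 r^{-1}\rd_v\Psi_1 + \rd_v(\Omega^2 r^{-1})\Psi_1$, the latter term being $O(r^{-2})\Psi_1$. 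Each of the six resulting contributions then falls directly into the class $\ep r^{-2}(|\psi|+|\rd_u\psi|+|\Psi_1|+r|\rd_v\Psi_1|)$.

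The main obstacle is purely bookkeeping: one must verify at each step that the correct power of $r$ is gained or lost, in particular after converting outgoing derivatives of $\psi$ into $\Psi_1$ and $\rd_v\Psi_1$. I expect no conceptual difficulty beyond this careful arithmetic, since the conclusion is structurally dictated by the two observations that $\tw_1$ is the only coefficient multiplying a $\rd_u$-derivative (hence the only source of a $\rd_v\rd_u\psi$ term), and that the wave equation exchanges this problematic second derivative for an angular Laplacian.
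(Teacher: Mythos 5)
Your overall route is the same as the paper's: the proof given there is literally ``immediate calculation, using \eqref{psi.eq}, \eqref{H0} and \eqref{H2}'', and you have correctly isolated the one structural step, namely that the single term $\ep\,\Omega^{-2}\tw_1\,\rd_v\rd_u\psi$ must be rewritten via the wave equation \eqref{psi.eq} as $\frac{\ep\tw_1}{4r^2}\sDelta\psi+\ep\,\Omega^{-2}\tw_1 F$, which produces exactly the subtracted main term plus an $O(\ep^2)$ remainder of the right form. Your treatment of the pieces involving $\rd_v(\Omega^{-2})$, $\rd_v\tw_1$, $\rd_v\tq$, $\rd_v\psi$ and $\rd_v(r\rd_v\psi)$ is also correct, including the conversion $r\rd_v\psi=\Omega^2 r^{-1}\Psi_1$.

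There is, however, one of your ``six remaining pieces'' that does not land in the stated class under the bound you invoke: the piece $\ep\,\Omega^{-2}(\rd_v\tw_0)\,\psi$. The estimate you quote from \eqref{H2} is $r|\rd_v\tw_0|\ls 1$, i.e.\ $|\rd_v\tw_0|\ls r^{-1}$, which only yields $\ep\, r^{-1}|\psi|$ --- a full power of $r$ worse than the target $\ep\, r^{-2}|\psi|$, and since $|\psi|$ does not decay (it tends to the radiation field) the two are genuinely different. For $\tq$ the same $r^{-1}$ loss is harmless because $\tq$ multiplies $r\rd_v\psi=O(r^{-1}|\Psi_1|)$, but $\tw_0$ multiplies $\psi$ itself. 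To obtain \eqref{r^2.dF} exactly as written one needs $|\rd_v\tw_0|\ls r^{-2}$, which is strictly stronger than what \eqref{H2} provides; and the discrepancy is not cosmetic, since in the application (Proposition~\ref{prop.ang}) a contribution $\ep\int r^{p-1}|\rd_v\Psi_1||\psi|$ would generate $\int r^{p-1}|\psi|^2$, which diverges whenever the radiation field is nonzero. So you should either strengthen the hypothesis you invoke for $\rd_v\tw_0$, or record the weaker bound $\ep r^{-1}|\psi|$ for this term and verify it can still be absorbed downstream; as written, your assertion that this contribution ``falls directly into the class'' is the one step of the computation that is not justified.
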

	\eqref{r^2.dF} is also true replacing $F$ by $F_{\geq 1}$, mutatis mutandis.
		\begin{proof}
		Immediate calculation, using \eqref{psi.eq}, \eqref{H0} and \eqref{H2}.
		\end{proof}
		
		\begin{prop}\label{prop.ang} \label{higher.prop}For all $u_1<u_2$, we have for all $0<p \leq p_{low}$:
		\begin{equation}\begin{split}\label{higher.est}
				&		\int_{\Cutwo}  r^p|\rd_v \Psi_1|^2+	\int_{\D(u_1,u_2)} 	\left( r^{p-1} |\rd_v \Psi_1|^2 + r^{p+1} |\rd_v\psi|^2\right)\\ & \lesssim 	\int_{\Cuone}  r^p|\rd_v \Psi_1|^2+    \boundarytwo. \end{split}
				\end{equation}
		\end{prop}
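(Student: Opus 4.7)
The plan is to apply the $r^p\rd_v$ multiplier identity \eqref{rp2} from Lemma~\ref{lemma.calc2} to $\Psi_1 = \Omega^{-2}r^2\rd_v\psi_{\geq 1}$, which satisfies the commuted equation \eqref{psi.source.commuted2} with source $\rd_v(r^2 F_{\geq 1}/\Omega^2)$, where $F_{\geq 1}$ is the $L\ge 1$ projection of the inhomogeneity in \eqref{psi.eq}. Integrating the pointwise identity over $\D(u_1,u_2)\times\s^2$ and using the asymptotics \eqref{H0} (so that $-\rd_u r,\rd_v r=1+O(r^{-1})$ and $\Omega^2=4+O(r^{-1})$), one obtains the coercive bulk $\sim(2+\tfrac{p}{2})\int_\D r^{p-1}|\rd_v\Psi_1|^2$, the positive angular bulk $\tfrac{2-p}{2}\int_\D r^{p-3}|\snabla\Psi_1|^2$, the sign-indefinite bulk $(p-2)\int_\D r^{p-3}|\Psi_1|^2$, the outgoing flux $\int_{\Cutwo}r^p|\rd_v\Psi_1|^2$, the initial datum on $\Cuone$, boundary contributions on $\gamma_R$ from the $\rd_v$-integrations by parts, and the spacetime integral of the source.

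The critical observation is that $\Psi_1$ inherits the absence of an $L=0$ mode from $\psi_{\geq 1}$, so the Poincar\'e inequality \eqref{Poincare} applies with $L(L+1)\ge 2$, giving $\int_{\s^2}|\snabla\Psi_1|^2\ge 2\int_{\s^2}|\Psi_1|^2$. Multiplying by $\tfrac{2-p}{2}$ and adding the $(p-2)|\Psi_1|^2$ contribution produces a non-negative quantity, which we simply discard. Note that the $L=1$ mode saturates this Poincar\'e inequality exactly, so we obtain no coercive control on $\int_\D r^{p-3}|\Psi_1|^2$ from the identity itself --- this is the central difficulty, to be overcome later via Hardy. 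For the source, Lemma~\ref{r^2.dF.lemma} yields $|\rd_v(r^2 F_{\geq 1}/\Omega^2)|\ls\ep r^{-2}(|\sDelta\psi_{\geq 1}|+|\psi_{\geq 1}|+|\rd_u\psi_{\geq 1}|+|\Psi_1|+r|\rd_v\Psi_1|)$; weighted Cauchy--Schwarz against $r^p\rd_v\Psi_1$ yields $\ep\int_\D r^{p-1}|\rd_v\Psi_1|^2$ (absorbed into the coercive bulk by smallness of $\ep$) plus the error $\ep\int_\D r^{p-3}(|\sDelta\psi_{\geq 1}|^2+|\psi_{\geq 1}|^2+|\rd_u\psi_{\geq 1}|^2+|\Psi_1|^2)$. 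The $|\Psi_1|^2$ piece is controlled via Hardy \eqref{Hardy5} with $q=p$, producing $\ep(2-p)^{-2}\int_\D r^{p-1}|\rd_v\Psi_1|^2$ plus a boundary contribution; this is absorbed into the coercive bulk since $\ep(2-p_{low})^{-2}\ls 1$ by the very definition of $p_{low}$. The $|\psi_{\geq 1}|^2$ and $|\rd_u\psi_{\geq 1}|^2$ pieces are reduced via the outgoing and ingoing Hardy inequalities \eqref{Hardy5}, \eqref{Hardy.in} to bulk quantities bounded by the lower-$p$ estimate Corollary~\ref{energy.low.cor}, while the angular piece $|\sDelta\psi_{\geq 1}|^2$ requires commuting \eqref{wave.main} once with $\snabla$ and applying the $\snabla$-commuted version of Corollary~\ref{energy.low.cor} --- this is precisely the origin of the $\snabla^\beta\phi$ terms with $|\beta|\le 1$ in the boundary integral $\boundarytwo$.

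All boundary contributions along $\gamma_R$ arising from the various integrations by parts above take the schematic form $\int_{u_1}^{u_2}\sum_{|\alpha|,|\beta|\le 1}|\rd_\alpha\snabla^\beta\phi|^2(u,v_R(u))\,du$, i.e.\ $\boundarytwo$, after invoking Lemma~\ref{lemma.average}. Finally, to insert the bulk $\int_\D r^{p+1}|\rd_v\psi_{\geq 1}|^2$ on the left-hand side, note that $|\Psi_1|^2\simeq r^4|\rd_v\psi_{\geq 1}|^2$, so this quantity equals $\int_\D r^{p-3}|\Psi_1|^2$ up to $\Omega^{-4}$-factors, and a concluding application of Hardy \eqref{Hardy5} with $q=p$ bounds it by $(2-p)^{-2}\int_\D r^{p-1}|\rd_v\Psi_1|^2$ plus an admissible boundary term, both already on the RHS of \eqref{higher.est}. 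The main obstacle is the sharp Poincar\'e cancellation at $L=1$ combined with the tight $\ep$-smallness budget in the source absorption, which together force the restriction $p\le p_{low}$ with $2-p_{low}=O(\sqrt{\ep})$.
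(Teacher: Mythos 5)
Your overall architecture is the paper's: the multiplier identity \eqref{rp2} for $\Psi_1$, the Poincar\'e-based coercivity with the borderline $L=1$ mode, the Hardy inequalities \eqref{Hardy5} and \eqref{Hardy.in} for $\psi$, $\rd_u\psi$ and $\Psi_1$, and Lemma~\ref{lemma.average} for the $\gamma_R$ boundary terms. The genuine gap is in your treatment of the $\sDelta\psi$ contribution to the commuted source. Lemma~\ref{r^2.dF.lemma} isolates the term $\ep\,\tw_1\,\sDelta\psi/(4r^2)$ precisely because it cannot be handled by the crude Cauchy--Schwarz you propose. If you estimate $\ep\int_{\D(u_1,u_2)} r^{p-2}|\rd_v\Psi_1||\sDelta\psi|\ls \ep\int r^{p-1}|\rd_v\Psi_1|^2+\ep\int r^{p-3}|\sDelta\psi|^2$, the second term carries \emph{two} angular derivatives of $\psi$: by Hardy it is controlled by $\int r^{p-1}|\rd_v\sDelta\psi|^2$, i.e.\ by the twice-$\snabla$-commuted flux. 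A single commutation with $\snabla$ (which is all that the right-hand side of \eqref{higher.est}, with $|\beta|\le 1$ in $\boundarytwo$, permits) only buys you $\int_{\D(u_1,u_2)} r^{p-3}|\snabla\psi|^2$, not $\int_{\D(u_1,u_2)} r^{p-3}|\sDelta\psi|^2$. So as written your argument either does not close or proves a strictly weaker statement, with $|\beta|\le 2$ boundary terms and an additional data term of the type $\int_{\Cuone} r^{p}|\rd_v\sDelta\psi|^2$ on the right, which is not controlled by $\int_{\Cuone} r^p|\rd_v\Psi_1|^2$.

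The paper's fix is structural rather than a loss of derivatives: it integrates $\ep\int r^{p-2}\tw_1[\rd_v\Psi_1]\sDelta\psi$ by parts on $\mathbb{S}^2$ \emph{and} in $v$, so that one angular derivative lands on $\Psi_1$ and one on $\psi$; the $v$-derivative then falls on $r^{p-2}\tw_1\snabla\psi$, and the hypothesis $r^2|\rd_v\tw_1|\ls 1$ from \eqref{H2} (this is exactly why \eqref{H1} demands decay of $\rd_v W_1$), together with $\snabla\rd_v\psi=\Omega^2r^{-2}\snabla\Psi_1$, reduces everything to $\int r^{p-4}|\snabla\Psi_1|^2+\int r^{p-1}|\psi||\rd_v\psi|$ plus admissible boundary terms. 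This is also why you cannot simply ``discard'' the non-negative Poincar\'e combination: the paper retains the coercive bulk $[2-p]\int_{\D(u_1,u_2)} r^{p-4}|\snabla\Psi_1|^2$ on the left precisely in order to absorb this error, using $\ep=o(2-p)$. Note that the difficulty only arises when $\tw_1\neq 0$, i.e.\ in the presence of the first-order $\rd_u\phi$ terms; for the pure-potential case your argument is essentially the paper's and goes through.
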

		\begin{proof}
Fix $0<p<2$. We make use of Lemma~\ref{lemma.calc2} and integrate \eqref{rp2} on $\D(u_1,u_2)$. By the fact that for $p<2$, the boundary terms in $v$ of  $\rd_v\left( r^{p-2}\big( -\rd_u r \rd_v r+ r \rd_{u} \rd_v r\big)|\Psi_1|^2- \frac{\Omega^2 r^{p-2}}{8} |\snabla \Psi_1|^2\right) $ on $\mathcal{I}^+=\{v=\infty\}$ are $0$ and taking advantage of Lemma~\ref{r^2.dF.lemma},  we obtain \begin{equation}\begin{split}
		&	\int_{\Cutwo}  r^p|\rd_v \Psi_1|^2+	\int_{\D(u_1,u_2)} 	[2+\frac{p}{2}] r^{p-1} |\rd_v \Psi_1|^2		 \\ &  + \int_{\D(u_1,u_2)}  r^{p-3}\left(\frac{\Omega^2}{8}\big([2-p] \rd_v r -  r\rd_v \log(\Omega^2) \big) |\snabla \Psi_1|^2 + \big(  (p-2)\rd_v r [-\rd_v r \rd_u r  + r \rd_{uv}^2 r] - (\rd_{vv}^2 r)(\rd_u r)+ r \rd_u \rd_v \rd_v r\big)|\Psi_1|^2\right) \\ &\ls	\int_{\Cuone} r^p|\rd_v \Psi_1|^2+ \ep \int_{\D(u_1,u_2)}  r^{p-2}| \rd_v \Psi_1|(|\psi|+|\rd_u \psi|+|\Psi_1| + r|\rd_v \Psi_1|)+ \ep \bigl| \int_{\D(u_1,u_2)} r^{p-2} \tw_1 [\rd_v \Psi_1] \sDelta\psi \bigr| \\&+\boundarytwo. \end{split}
\end{equation} Now, we use the smallness of $\ep$ to absorb the $\ep \int_{\D(u_1,u_2)}  r^{p-2}| \rd_v \Psi_1|(|\psi|+|\rd_u \psi|+|\Psi_1| + r|\rd_v \Psi_1|)$ term into the left-hand-side from which we obtain 
\begin{equation}\begin{split}\label{ineq4}
		&	\int_{\Cutwo}  r^p|\rd_v \Psi_1|^2+	\int_{\D(u_1,u_2)} 	[2+\frac{p}{2}] r^{p-1} |\rd_v \Psi_1|^2		 \\ &  + \int_{\D(u_1,u_2)}  r^{p-3}\left(\frac{\Omega^2}{8}\big([2-p] \rd_v r -  r\rd_v \log(\Omega^2) \big) |\snabla \Psi_1|^2 + \big(  (p-2)\rd_v r [-\rd_v r \rd_u r  + r \rd_{uv}^2 r] - (\rd_{vv}^2 r)(\rd_u r)+ r \rd_u \rd_v \rd_v r\big)|\Psi_1|^2\right) \\ &\ls	\int_{\Cuone} r^p|\rd_v \Psi_1|^2+ \ep \int_{\D(u_1,u_2)}  r^{p-3}(|\psi|^2+|\rd_u \psi|^2+|\Psi_1|^2)+ \ep \bigl| \int_{\D(u_1,u_2)} r^{p-2} \tw_1 [\rd_v \Psi_1] \sDelta\psi \bigr| \\&+\boundarytwo. \end{split}
\end{equation} 
Apply Proposition~\ref{Hardy.prop}   with $q=p$ gives, also exploiting the fact that $\int_{C_{u_1}} r^{q-4}  |\snabla \psi|^2 \lesssim E(u_1)$:
\begin{equation}\begin{split}\label{high1}
&\int_{\D(u_1,u_2)}  r^{p-3}(|\psi|^2+|\rd_u \psi|^2)\ls [2-p]^{-2}\int_{\DD(u_1,u_2)}\left[  r^{p-3} |\rd_v  \psi|^2+ r^{p-5} |\snabla \psi|^2\right]  \\  & +(2-p)^{-1}(E(u_1)+\boundary).\end{split}
\end{equation}Now we can apply Proposition~\ref{Hardy.prop} again to $\snabla \psi$ this time  with $q=p-2$ to obtain\begin{equation}\label{high2}
 \int_{\D(u_1,u_2)} r^{p-5} |\snabla \psi|^2\lesssim  \int_{\D(u_1,u_2)} r^{p-3} |\snabla \rd_v \psi|^2 + \boundarytwo.
\end{equation}
Combining \eqref{high1}, \eqref{high2}, and recalling that $r^2 |\rd_v \psi| \lesssim |\Psi_1|$, we get \begin{equation}\begin{split}\label{ineq5}
&\int_{\D(u_1,u_2)}  r^{p-3}(|\psi|^2+|\rd_u \psi|^2)\ls [2-p]^{-2}\int_{\DD(u_1,u_2)}r^{p-7}\left[   |\Psi_1|^2+ |\snabla\Psi_1|^2\right]  \\  & +(2-p)^{-1}(E(u_1)+\boundarytwo).\end{split}
\end{equation}

 We now return to \eqref{ineq4}, in particular its third term. By  \eqref{Poincare} (Poincar\'{e}'s inequality) in Proposition~\ref{Poincare.prop}, we have $\int_{\mathbb{S}^2} |\snabla (\Psi_1)_{\geq 2}|^2 \geq 6 \int_{\mathbb{S}^2} | (\Psi_1)_{\geq 2}|^2  $, and of course $\int_{\mathbb{S}^2} |\snabla (\Psi_1)_{= 1}|^2  = 2  \int_{\mathbb{S}^2} | (\Psi_1)_{=1}|^2  $, hence, as a consequence of \eqref{H0}, we have a coercive estimate of the form 
 
 \begin{equation}\begin{split}
 &	\int_{\D(u_1,u_2)}  r^{p-3}\left(\frac{\Omega^2}{8}\big([2-p] \rd_v r -  r\rd_v \log(\Omega^2) \big) |\snabla \Psi_1|^2 + \big(  (p-2)\rd_v r [-\rd_v r \rd_u r  + r \rd_{uv}^2 r] - (\rd_{vv}^2 r)(\rd_u r)+ r \rd_u \rd_v \rd_v r\big)|\Psi_1|^2\right)\\ & \gtrsim [2-p] \int_{\D(u_1,u_2)}  \left(r^{p-3} |\snabla (\Psi_1)_{\geq 2}|^2- r^{p-4} |(\Psi_1)_{= 1}|^2\right)\gtrsim  [2-p] \int_{\D(u_1,u_2)}  \left(r^{p-4} |\snabla \Psi_1|^2- r^{p-4} |\Psi_1|^2\right).  \end{split}
 \end{equation}
 Thus, combining this estimate with \eqref{ineq4}, \eqref{ineq5}  (we took $R(\ep)$ to be large enough in order to absorb various terms into the left-hand-side) leads to

 \begin{equation}\begin{split}\label{ineq6}
 		&	\int_{\Cutwo}  r^p|\rd_v \Psi_1|^2+	\int_{\D(u_1,u_2)} 	[2+\frac{p}{2}] r^{p-1} |\rd_v \Psi_1|^2+ [2-p] \int_{\D(u_1,u_2)} r^{p-4} |\snabla \Psi|^2		 \\ &\ls	\int_{\Cuone} r^p|\rd_v \Psi_1|^2+ \ep \int_{\D(u_1,u_2)}  r^{p-3}|\Psi_1|^2+ \ep \bigl| \int_{\D(u_1,u_2)} r^{p-2} \tw_1 [\rd_v \Psi_1] \sDelta\psi \bigr| \\&+E(u_1)+\boundarytwo. \end{split}
 \end{equation}  Now, by Proposition~\ref{Hardy.prop} (Hardy inequality) applying to $\Psi_1$ with $q=p$, we get, using the fact that $\ep [2-p]^{-1}=o(1)$ since $p\leq p_{low}$,
 
  \begin{equation}\begin{split}\label{ineq7}
 		&	\int_{\Cutwo}  r^p|\rd_v \Psi_1|^2+	\int_{\D(u_1,u_2)} 	[2+\frac{p}{2}] r^{p-1} |\rd_v \Psi_1|^2+ [2-p] \int_{\D(u_1,u_2)} r^{p-4} |\snabla \Psi|^2		 \\ &\ls	\int_{\Cuone} r^p|\rd_v \Psi_1|^2+ \ep \bigl| \int_{\D(u_1,u_2)} r^{p-2} \tw_1 [\rd_v \Psi_1] \sDelta\psi \bigr| +E(u_1)+\boundarytwo. \end{split}
 \end{equation} 
Finally, we handle the term $\int_{\D(u_1,u_2)} r^{p-2} \tw_1 [\rd_v \Psi_1] \sDelta\psi$.  Integrating by parts on $\mathbb{S}^2$ and in $v$ gives, using \eqref{H2} \begin{equation}\begin{split}\label{ineq8}
&\bigl| \int_{\DD(u_1,u_2)}	 r^{p-2} \rd_v \Psi_1 \tw_1(u,v) \sDelta \psi\bigr|=\bigl| \int_{\DD(u_1,u_2)}	  r^{p-2}   \tw_1(u,v) \rd_v \snabla \Psi_1 \cdot \snabla \psi\bigr|=\bigl| \int_{\DD(u_1,u_2)}	  r^{p-2}   \tw_1(u,v) \rd_v \snabla \Psi_1 \cdot \snabla \psi\bigr|\\ & \ls \int_{\DD(u_1,u_2)}	  r^{p-4}   | \snabla \Psi_1|^2+|\int_{\DD(u_1,u_2)}	r^{p-1}|\psi||\rd_v \psi| +\boundarytwo\\ &  \ls \int_{\DD(u_1,u_2)}	  r^{p-4}   | \snabla \Psi_1|^2+ r^{p-3} |\psi|^2 + r^{p-3} |\Psi_1|^2 +\boundarytwo\\ & \ls \int_{\DD(u_1,u_2)}	  r^{p-4}   | \snabla \Psi_1|^2 + r^{p-3} |\Psi_1|^2 +\boundarytwo ,
\end{split}
\end{equation} where in the last inequality, we have applied Proposition~\ref{Hardy.prop} (Hardy's inequality) to $\psi$ for $q=p$.

Now, using  Proposition~\ref{Hardy.prop}, this time on $\Psi_1$ with $q=p$ finally gives
\begin{equation}\begin{split}\label{ineq10}
		&\bigl| \int_{\DD(u_1,u_2)}	 r^{p-2} \rd_v \Psi_1 \tw_1(u,v) \sDelta \psi\bigr|\\& \ls \int_{\DD(u_1,u_2)}	  r^{p-4}   | \snabla \Psi_1|^2 + r^{p-1} |\rd_v\Psi_1|^2 +\boundarytwo .
	\end{split}
\end{equation} 
Combining \eqref{ineq7}, \eqref{ineq8} and using the smallness of $\ep$ (recall that $\ep=o(2-p)$ by choice of $p_{low}$) to absorb the right-hand-side of \eqref{ineq8} into the left-hand-side of \eqref{ineq7} finally leads to  \begin{equation}\begin{split}\label{ineq9}
		&	\int_{\Cutwo}  r^p|\rd_v \Psi_1|^2+	\int_{\D(u_1,u_2)} 	[2+\frac{p}{2}] r^{p-1} |\rd_v \Psi_1|^2+ [2-p] \int_{\D(u_1,u_2)} r^{p-4} |\snabla \Psi|^2		 \\ &\ls	\int_{\Cuone} r^p|\rd_v \Psi_1|^2 +E(u_1)+\boundarytwo, \end{split}
\end{equation} 

which concludes the proof of \eqref{higher.est}, noting again that Proposition~\ref{Hardy.prop} provides the control of 	$\int_{\D(u_1,u_2)} r^{p-3}|\Psi_1|^2=	\int_{\D(u_1,u_2)} r^{p+1}|\psi|^2$.

		\end{proof}
		We deduce a $r^p$-weighted-hierarchy analogous to that of Corollary~\ref{energy.low.cor}.
		\begin{cor}\label{energy.high.cor}
For $0\leq p\leq p_{low}(\ep)$ and $u_1<u_2$:
\begin{equation}\label{rp.high.boundedness}
		\int_{u_1}^{u_2} \left( E_{p-1}[\Psi_1](u )+ E_{p+1}[\psi](u )\right) du + E_p[\Psi_1](u_2) \lesssim  E_p[\Psi_1](u_1)+ \sum_{|\beta|\leq 1}E[\snabla^{\beta}\phi](u_1).
\end{equation}
\end{cor}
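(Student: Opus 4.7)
The plan is to mirror the derivation of Corollary~\ref{energy.low.cor} from Proposition~\ref{lower.prop}, substituting Proposition~\ref{higher.prop} as the $r^p$-weighted input and upgrading the averaging step to include one angular commutation.

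First I would simply unpack the definitions: by definition of $E_p[\cdot]$, the left-hand side of \eqref{higher.est} is already exactly
\[
E_p[\Psi_1](u_2) + \int_{u_1}^{u_2}\left(E_{p-1}[\Psi_1](u') + E_{p+1}[\psi](u')\right) du',
\]
up to absorbing the spacetime integral $\int_{\D(u_1,u_2)} r^{p+1}|\rd_v\psi|^2$ into $\int_{u_1}^{u_2} E_{p+1}[\psi](u')du'$ via Fubini (the volume form is $du\,dv\,d\sigma_{\s^2}$ by convention, so the slicing is immediate). So Proposition~\ref{higher.prop} already delivers the desired left-hand side, with the right-hand side consisting of $E_p[\Psi_1](u_1)$ plus the boundary term $\boundarytwo$ on the timelike curve $\gamma_R$.

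The one step that requires argument is bounding this boundary term by $\sum_{|\beta|\leq 1} E[\snabla^\beta \phi](u_1)$. This is exactly the content of Lemma~\ref{lemma.average} applied at angular order $k=1$: commuting \eqref{wave.main} with $\snabla^\beta$ for $|\beta|\leq 1$ (which preserves the form of the equation by spherical symmetry of $g$ and the potential coefficients), the ILED hypothesis \eqref{ILED} applied to $\snabla^\beta \phi$ and the mean-value-in-$\rho$ argument of Lemma~\ref{lemma.average} give
\[
\int_{u_1}^{u_2}\sum_{|\alpha|\leq 1,\,|\beta|\leq 1}\int_{\s^2}|\rd_\alpha \snabla^\beta \phi|^2(u,v_R(u),\omega)\,du\,d\omega \;\lesssim\; \sum_{|\beta|\leq 1} E[\snabla^\beta \phi](u_1),
\]
which is precisely $\boundarytwo$ (up to the constant $C_R$, which is harmless since $R=R(\ep)$ is fixed once and for all).

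Combining Proposition~\ref{higher.prop} with the above displayed inequality yields \eqref{rp.high.boundedness}, completing the proof. I do not expect any serious obstacle here: the real work was already done inside Proposition~\ref{higher.prop} (handling the $\tw_1 \sDelta\psi$ cross term, closing the Hardy chain, and exploiting Poincaré to dominate the $|\Psi_1|^2$ zeroth-order terms by $|\snabla \Psi_1|^2$ on the higher harmonics); passing from that integrated identity to the energy-hierarchy statement is purely a bookkeeping exercise plus one invocation of the ILED averaging lemma.
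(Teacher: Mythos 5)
Your proposal is correct and follows essentially the same route as the paper: the paper likewise derives this corollary directly from Proposition~\ref{higher.prop} by repeating the argument of Corollary~\ref{energy.low.cor}, i.e., reading off the weighted energies from the bulk and flux terms and then absorbing the boundary contribution on $\gamma_R$ via Lemma~\ref{lemma.average} applied with one angular commutation ($k=1$). No gap.
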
	\begin{proof}
This is an immediate consequence of Proposition~\ref{higher.prop}, following the same proof as Corollary~\ref{energy.low.cor}.
\end{proof}
Finally, we apply the hierarchy of Corollary~\ref{energy.high.cor} to deduce the faster energy-decay of higher-order spherical harmonics.

\begin{cor}\label{decay.high.cor}
The following estimate holds for all $u\geq u_0$:
\begin{equation} \label{high.decay.est}
	E[\phi_{\geq 1}](u)  \lesssim u^{-2p_{low}}. \end{equation}
\end{cor}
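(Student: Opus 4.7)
The plan is to apply the hierarchy-to-decay mechanism of Lemma~\ref{lemma.hierarchy} twice: first to the commuted quantity $\Psi_1 = r^2 \rd_v \psi_{\geq 1}$, and then to $\psi_{\geq 1}$ itself with an extended weight range obtained from the decay of $\Psi_1$. The net gain will be $u^{-2p_{low}}$ rather than the mere $u^{-p_{low}}$ that one obtains by a direct application of Corollary~\ref{low.cor.decay} to $\phi_{\geq 1}$.

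First, I would reinterpret Corollary~\ref{energy.high.cor} as an $r^p$-hierarchy for $\Psi_1$ in the range $0 \leq p \leq p_{low}$, with the caveat that its right-hand side features an additional inhomogeneity $\sum_{|\beta|\leq 1} E[\snabla^{\beta}\phi](u_1)$. Since the coefficients in \eqref{wave.main} are angle-independent by spherical symmetry of $g$, each angularly commuted field $\snabla^\beta \phi$ again solves an equation of the form \eqref{wave.main}, so Corollary~\ref{low.cor.decay} applies to it and controls the inhomogeneity by $u_1^{-p_{low}}$. A dyadic iteration in the spirit of the proof of Lemma~\ref{lemma.hierarchy}, modified to absorb this decaying inhomogeneity, then yields
\begin{equation*}
E_q[\Psi_1](u) \lesssim u^{-(p_{low}-q)}, \qquad 0 \leq q \leq p_{low}.
\end{equation*}

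Second, I would combine this decay of $E_p[\Psi_1]$ with the pointwise identity $|\rd_v \psi_{\geq 1}|^2 = r^{-4}|\Psi_1|^2$ to extend the $r^p$-hierarchy of Corollary~\ref{energy.low.cor} for $\psi_{\geq 1}$ beyond the baseline range $[0, p_{low}]$, all the way up to a weight $p \approx p_{low} + 2$. The Hardy inequality \eqref{Hardy5}, applied to $\Psi_1$ and combined with boundary control on $\gammaR$ afforded by Lemma~\ref{lemma.average}, is the tool effecting this translation of $r$-weights between $\Psi_1$ and $\psi_{\geq 1}$. With the extended hierarchy in hand, a second application of Lemma~\ref{lemma.hierarchy} at the maximal weight $p \approx p_{low}+2 \geq 2 p_{low}$ (valid since $p_{low} < 2$) produces $E[\phi_{\geq 1}](u) \lesssim u^{-2p_{low}}$, which is the desired estimate.

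The main obstacle I anticipate is the bookkeeping in the second step: combining the new bounds on $\Psi_1$ with the Hardy inequality to extend the weighted hierarchy for $\psi_{\geq 1}$ requires care, in particular as the weight surpasses the threshold $p_{low}$ where the standard $r^p$-multiplier computations become borderline. The additional boundary contributions on $\gammaR$ generated by the iterated Hardy applications would need to be absorbed using Lemma~\ref{lemma.average} together with the already-proved weaker decay of $E[\phi_{\geq 1}]$ coming from Corollary~\ref{low.cor.decay} applied to $\phi_{\geq 1}$.
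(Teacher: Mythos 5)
Your proposal is essentially the paper's argument: a first dyadic pivot on the commuted hierarchy of Corollary~\ref{energy.high.cor} (with the inhomogeneity $\sum_{|\beta|\leq 1}E[\snabla^{\beta}\phi_{\geq 1}](u_1)$ controlled as you say) yields $E_0[\Psi_1](u)\lesssim u^{-p_{low}}$, which the Hardy inequality converts into $E_{p_{low}}[\psi_{\geq 1}](u)\lesssim E_{2}[\psi_{\geq 1}](u)\lesssim u^{-p_{low}}$, and a second pivot on the uncommuted hierarchy of Corollary~\ref{energy.low.cor} applied to $\phi_{\geq 1}$ then gains another factor $u^{-p_{low}}$. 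The only caution concerns your phrasing of the second step: Lemma~\ref{lemma.hierarchy} is stated for $1<p<2$ and for a hierarchy whose top-order flux matches its bulk integrand, so it cannot be invoked verbatim ``at the maximal weight $p\approx p_{low}+2$'' (there the natural flux is $E_{p-2}[\Psi_1]$, not $E_{p}[\psi_{\geq 1}]$, and they agree only up to commutator terms); the correct implementation is exactly the two successive pivots just described, each within the admissible range $[0,p_{low}]$, which produces the same exponent $2p_{low}$.
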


		\begin{proof}
			 First, we choose a dyadic sequence $(u_n)_{n \in \mathbb{N}}$ and apply Corollary~\ref{rp.high.boundedness} to $u_1=u_n$, $u_2=u_{n+1}$; then, by the mean-value theorem, there exists  $u_n^{*} \in [u_n,u_{n+1}]$ such that \begin{equation*} 
			 E_{p-1}[\Psi_1](u_n^{*})  \lesssim u_n^{-1} \left(E_p[\Psi_1](u_n)+ \sum_{|\beta|\leq 1}E[\snabla^{\beta}\phi_{\geq 1}](u_n)\right)\lesssim u_n^{-1}		   \left(E_p[\Psi_1](u_0)+ \sum_{|\beta|\leq 1}E[\snabla^{\beta}\phi_{\geq 1}](u_0)\right)\ls u_n^{-1}, \end{equation*} where in the last inequality, we have also used \eqref{boundedness} applied to $\snabla^{\beta}\phi_{\geq 1}$. Then by using Corollary~\ref{rp.high.boundedness} again, we obtain that for all $u\in [u_n,u_{n+1}]$
			 \begin{equation*} 
			 	E_{p-1}[\Psi_1](u)  \lesssim u^{-1}. \end{equation*} By Lemma~\ref{lemma.hierarchy}, we obtain that  \begin{equation*} 
			 	E_{0}[\Psi_1](u)= \int_{C_u} |\rd_v \Psi_1|^2  \lesssim u^{-p_{low}}. \end{equation*} Then, by Proposition~\ref{Hardy.prop} (Hardy inequality), we obtain 
			 	\begin{equation*} 
			 	 \int_{C_u}  r^{-2}| \Psi_1|^2 =  \int_{C_u}  r^{2}| \rd_v \psi_{\geq 1}|^2= E_{2}[\psi_{\geq 1}](u)  \lesssim u^{-p_{low}}. \end{equation*}
			 	 
			 	 In particular, we have
			 	 	\begin{equation*} 
			 	  E_{p_{low}}[\psi_{\geq 1}](u)  \lesssim u^{-p_{low}}. \end{equation*}
			 	  
			 	  Then, we can apply Lemma~\ref{lemma.hierarchy} again to $\psi_{\geq 1}$,  this time invoking Corollary~\ref{energy.low.cor} applied to $\phi_{\geq 1}$ to finally obtain \eqref{high.decay.est}
			 as desired.

		\end{proof}
	We conclude this section by proving point-wise decay estimates corresponding to \eqref{pointwise.est}, \eqref{pointwise.est2}, but for $\phi_{\geq 1}$ instead of the full solution.
	
	\begin{cor}
		\label{cor.pointwise.ang} The following estimates hold: for all $\eta>0$, $u\geq u_0$
		\begin{equation}\label{pointwise.est.ang}
r|\phi_{\geq 1}|(u,v,\omega) \lesssim_{\eta}  u^{\frac{1}{2} - p_{low}+\eta}\lesssim u^{-\frac{3}{2}+ O(\sqrt{\ep})},
		\end{equation} 
			\begin{equation}\label{pointwise.est.ang2}
			r^{\frac{1}{2}}|\phi_{\geq 1}|(u,v,\omega) \lesssim_{\eta}  u^{ - p_{low}+ \eta}\lesssim u^{-2+ O(\sqrt{\ep})}.
		\end{equation}
	\end{cor}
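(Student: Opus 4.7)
The plan is to convert the energy-decay statement of Corollary~\ref{decay.high.cor} into decay of the full family of $r^p$-weighted energies and then deduce the two pointwise estimates by two different routes. The proof of Corollary~\ref{decay.high.cor} shows in particular that $\tilde{E}_{p_{low}}[\phi_{\geq 1}](u) \lesssim u^{-p_{low}}$; feeding this back into the hierarchy \eqref{rp.high.boundedness} and applying Lemma~\ref{lemma.hierarchy} to $\psi_{\geq 1}$ gives, for all $0 \leq p \leq p_{low}$,
\begin{equation*}
\tilde{E}_p[\phi_{\geq 1}](u) \lesssim u^{-(2 p_{low} - p)}.
\end{equation*}
Since the coefficients of \eqref{wave.main} are spherically symmetric, commuting with $\snabla^{\beta}$ preserves the form of the equation and the same bound holds for $\snabla^{\beta} \phi_{\geq 1}$ with $|\beta| \leq 2$.

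The estimate \eqref{pointwise.est.ang2} then follows directly: applying Proposition~\ref{energy.decay.prop} with $\gamma = \eta$ to each $\snabla^{\beta} \phi_{\geq 1}$ and invoking the Sobolev embedding $H^{2}(\mathbb{S}^2)\hookrightarrow L^{\infty}(\mathbb{S}^2)$ yields
\begin{equation*}
r^{\frac{1}{2} + \eta}(u,v)\, |\phi_{\geq 1}(u,v,\omega)| \lesssim \sum_{|\beta| \leq 2} \bigl[ \tilde{E}_{2\eta}[\snabla^{\beta} \phi_{\geq 1}](u) \bigr]^{1/2} \lesssim u^{-p_{low} + \eta}.
\end{equation*}
As $r(u,v) \geq R$ in the region considered, the extra factor $r^{\eta}$ is absorbed harmlessly into the implicit constant, giving \eqref{pointwise.est.ang2}.

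The estimate \eqref{pointwise.est.ang} requires control uniform in $r$ and cannot be read off from Proposition~\ref{energy.decay.prop} alone, since the extra $r^{1/2-\gamma}$ that would arise is unbounded near $\mathcal{I}^+$. The approach is instead to exploit the radiation-field structure $\psi_{\geq 1} = r \phi_{\geq 1}$ by integrating $\rd_v \psi_{\geq 1}$ in $v$ from the curve $\gamma_R$. Writing
\begin{equation*}
\snabla^{\beta} \psi_{\geq 1}(u, v, \omega) = \snabla^{\beta}\psi_{\geq 1}(u, v_R(u), \omega) + \int_{v_R(u)}^{v} \rd_v \snabla^{\beta} \psi_{\geq 1}(u, v', \omega)\, dv',
\end{equation*}
Cauchy--Schwarz with weights $r^{-1-\eta}$ and $r^{1+\eta}$ bounds the integrated term, after integration over $\mathbb{S}^{2}$, by $\eta^{-1} R^{-\eta} E_{1+\eta}[\snabla^{\beta} \psi_{\geq 1}](u) \lesssim u^{-(2 p_{low} - 1 - \eta)}$, so that its $L^{2}(\mathbb{S}^{2})$ norm is $\lesssim u^{1/2 - p_{low} + \eta/2}$. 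The boundary contribution $\|\snabla^{\beta}\psi_{\geq 1}(u, v_R(u), \cdot)\|_{L^{2}(\mathbb{S}^{2})} = R\,\|\snabla^{\beta}\phi_{\geq 1}(u, v_R(u), \cdot)\|_{L^{2}(\mathbb{S}^{2})}$ is controlled by the previous step applied at $v = v_R(u)$, yielding $\lesssim u^{-p_{low} + \eta}$. Summing over $|\beta| \leq 2$ and applying Sobolev on $\mathbb{S}^{2}$ gives $|\psi_{\geq 1}(u, v, \omega)| \lesssim u^{1/2 - p_{low} + \eta}$, which is \eqref{pointwise.est.ang}.

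The main technical point is ensuring that the hierarchy of Corollary~\ref{energy.high.cor} extends to angular derivatives of $\phi_{\geq 1}$; this relies only on the spherical symmetry of the coefficients in \eqref{wave.main} and on the angular-commuted versions of \eqref{boundedness} and \eqref{ILED}, both of which are standard since $\snabla$ commutes with the radial geometric operators here. Modulo this preparatory commutation, the argument is a routine combination of weighted-energy interpolation, pointwise integration of $\rd_v \psi_{\geq 1}$ from $\gamma_R$, and Sobolev embedding on $\mathbb{S}^{2}$.
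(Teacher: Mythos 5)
Your proposal is correct and follows essentially the same route as the paper: Sobolev embedding on $\mathbb{S}^2$, the splitting of $\snabla^{\beta}\psi_{\geq 1}(u,v,\cdot)$ into its value on $\gamma_R$ plus a $v$-integral estimated by Cauchy--Schwarz against the $r^{1+\eta}$-weighted energy, with the boundary term controlled via Proposition~\ref{energy.decay.prop} and the bulk term via the commuted hierarchy of Proposition~\ref{prop.ang} and the faster decay of Corollary~\ref{decay.high.cor}; the $\gamma=\eta$ case of Proposition~\ref{energy.decay.prop} gives \eqref{pointwise.est.ang2} exactly as in the paper. Your explicit intermediate step $\tilde{E}_p[\phi_{\geq 1}](u)\lesssim u^{-(2p_{low}-p)}$ and the remark on $\snabla^{\beta}$-commutation merely make precise what the paper uses implicitly.
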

	
	\begin{proof}
		 We write, combining the Sobolev embedding on $\mathbb{S}^2$ and  the Cauchy-Schwarz inequality in $v$: for any $\eta>0$ \begin{equation}	|\psi_{\geq 1}|(u,v,\omega)
			\lesssim \sum_{|\beta| \leq 2}	\|\snabla^{\beta}\psi_{\geq 1}(u,v,\cdot)\|_{L^2(\mathbb{S}^2)}\lesssim  \sum_{|\beta| \leq 2}\left( R	\|\snabla^{\beta}\phi_{\geq 1}(u,v_R(u),\cdot)\|_{L^2(\mathbb{S}^2)} +  [\int_{v_R(u)}^{+\infty}\int_{\mathbb{S}^2} r^{1+2\eta} |\rd_v \snabla^{\beta}\psi_{\geq 1}|^2]^{1/2}\right).
		\end{equation} 
\eqref{pointwise.est.ang}  then follows immediately from the application of Proposition~\ref{energy.decay.prop} with $\gamma= \frac{1}{2}- \eta$, and Proposition~\ref{prop.ang} applied to $\snabla^{\beta }\phi$ for $|\beta| \leq 2$.  \eqref{pointwise.est.ang2} is obtained similarly as a consequence of Proposition~\ref{energy.decay.prop} with $\gamma= \eta$.

	\end{proof}
	
	\subsection{Higher-weighted energy estimates for the spherical average} \label{high.section}
	
	In Section~\ref{low.section}, we managed to prove the boundedness of $r^p$-weighted estimates for $p\leq p_{low} = 2 - O(\sqrt{\ep})$. In this section, we want to obtain a similar result for $2\leq p < 3 -O(\sqrt{\ep})$, which comes arbitrarily close to the sharp exponents $p<3$. To do this, we will restrict to our attention to the spherical-average of the solution \begin{equation}
\phi_0 = \int_{\mathbb{S}^2} \phi(u,v,\omega) d\omega.
	\end{equation} This section generalizes the treatment of  \cite{Moi2} in Section 6.3, whose main idea is to absorb the error terms inside the boundary term $E_p[\psi](u_2)$ instead of absorbing them into the bulk term $\int_{u_1}^{u_2} E_{p-1}[\psi](u) du$ with the help of a Grönwall argument. We point out, however, that the argument has been extended and significantly streamlined compared to its previous version present in \cite{Moi2}.

	We start by quoting an easy calculus lemma, on which the Grönwall argument will be eventually based. \begin{lemma}\label{calc.lemma}The following identity holds for all $u_1<u_2$:\begin{equation}
\int_{u_1}^{u_2} \frac{du}{a \sqrt{u}+b} = \frac{2}{a} \left[ \sqrt{u}- \frac{b}{a} \log(\sqrt{u}+ \frac{b}{a})\right]_{u_1}^{u_2},			\end{equation} where  we defined $\left[ F(u)\right]_{u_1}^{u_2}:=F(u_2)-F(u_1).$	
	\end{lemma}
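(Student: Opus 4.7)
The plan is to reduce the integral to an elementary one via the substitution $s=\sqrt{u}$, then carry out a trivial partial fraction manipulation. Since the statement is an equality of antiderivatives evaluated between $u_1$ and $u_2$, it suffices to find an antiderivative of $u \mapsto (a\sqrt{u}+b)^{-1}$ and show it agrees with $\tfrac{2}{a}\big(\sqrt{u}-\tfrac{b}{a}\log(\sqrt{u}+\tfrac{b}{a})\big)$ up to an additive constant.

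First, setting $s=\sqrt{u}$, so that $du = 2s\, ds$, one computes
\begin{equation}
\int \frac{du}{a\sqrt{u}+b} \;=\; \int \frac{2s\, ds}{as+b} \;=\; \frac{2}{a}\int \frac{(as+b)-b}{as+b}\, ds \;=\; \frac{2}{a}\int \Bigl(1 - \frac{b}{as+b}\Bigr) ds.
\end{equation}
Integrating term by term and returning to the variable $u$ yields
\begin{equation}
\int \frac{du}{a\sqrt{u}+b} \;=\; \frac{2}{a}\sqrt{u} \;-\; \frac{2b}{a^{2}}\log\bigl(a\sqrt{u}+b\bigr)+C.
\end{equation}

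Finally, using $\log(a\sqrt{u}+b) = \log(a) + \log(\sqrt{u}+\tfrac{b}{a})$, the $\log(a)$ piece is absorbed into the constant $C$, so one may equivalently write the antiderivative as $\frac{2}{a}\big(\sqrt{u}-\tfrac{b}{a}\log(\sqrt{u}+\tfrac{b}{a})\big)$. Taking the difference at $u_2$ and $u_1$ (where the constant cancels) gives the claimed identity. There is no serious obstacle; the only mild subtlety is noticing that the two possible forms of the logarithm, $\log(a\sqrt{u}+b)$ and $\log(\sqrt{u}+\tfrac{b}{a})$, differ by an additive constant and therefore produce the same boundary expression $\bigl[F(u)\bigr]_{u_{1}}^{u_{2}}$.
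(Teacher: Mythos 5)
Your proof is correct and follows the same route the paper indicates: the substitution $s=\sqrt{u}$ followed by an elementary partial-fraction step (the paper simply states the result is elementary via the change of variable $x=\sqrt{u}$). Your added remark that the two forms of the logarithm differ by the additive constant $\log(a)$, which cancels in the evaluation $\left[F(u)\right]_{u_1}^{u_2}$, is a correct and welcome clarification.
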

		\begin{proof}
		The proof is elementary, using a change of variable $x=\sqrt{u}$.
	\end{proof}
	Then, we move to the main result of this section, which quantifies faster energy decay for the spherical average than what was obtained in Section~\ref{low.section}.
	
	\begin{prop}\label{prop.high}
		There exists $2<p_{high}(\ep)\leq 1+ p_{low}<3$ such that $p_{high} = 3 -O(\sqrt{\ep})$ and  $\delta(\ep) =O(\sqrt{\ep})$ such that for all $u\geq u_0$, $0 \leq q \leq p_{high}-1$ \begin{equation}\label{high.est0}
			\tilde{E}_q[\phi_0](u) \lesssim u^{-p_{high}+2\delta+ q},
		\end{equation} %	\begin{equation}\label{high.estp-1}		\tilde{E}_{p_{high}-1}[\phi_0](u) \lesssim u^{-1+2\delta},		\end{equation}
		\begin{equation}\label{high.estp}
		\tilde{E}_{p_{high}}[\phi_0](u) \lesssim u^{2\delta}.
		\end{equation}
	
	\end{prop}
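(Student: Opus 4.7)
The plan is to establish the top-level $r^{p_{high}}$-weighted estimate for the spherical average $\psi_0 = r\phi_0$, crucially exploiting $\sDelta \psi_0 \equiv 0$, which removes the standard obstruction to $p > 2$ in the $r^p$-hierarchy. I would first apply Lemma~\ref{lemma.calc1} to $\psi_0$ with source $F_0$ (the spherical average of the source in \eqref{psi.eq}) and integrate on $\D(u_1, u_2)$. The boundary on $\gamma_R$ is handled by the averaging of Lemma~\ref{lemma.average}, while the first-order error terms in $F_0$ (arising from $w_1, W_1, q, Q$) are controlled by the smallness of $\ep$ and by the novel Hardy inequality \eqref{Hardy.in}, which exploits the wave equation structure to absorb the ingoing-derivative contributions.

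The central task is estimating the scale-critical potential error $\ep\int_{\D(u_1,u_2)} r^{p-2}|\psi_0||\rd_v\psi_0|$. Following the strategy behind \eqref{hardy.intro2}, I would use Cauchy--Schwarz to split it as $\ep[\int r^p|\rd_v\psi_0|^2]^{1/2} [\int r^{p-4}|\psi_0|^2]^{1/2}$, then invoke Proposition~\ref{Hardy.prop} with $q = p-1$ on the second factor---valid precisely for $p < 3$ because $\psi_0 \to \psi_{\mathcal{I},0}$ is finite at infinity, and producing a coefficient $(3-p)^{-1}$. The resulting spacetime bulk $\int r^{p-2}|\rd_v\psi_0|^2$ is controlled via Corollary~\ref{energy.low.cor} (the $(p-1)$-hierarchy) together with Corollary~\ref{low.cor.decay}, giving $\tilde{E}_{p-1}[\phi_0](u_1) \lesssim u_1^{-(p_{low}-p+1)}$ whenever $p \leq 1 + p_{low}$. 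Choosing $p_{high}$ with $3 - p_{high} \sim \sqrt{\ep}$ so that simultaneously $p_{high} \leq 1 + p_{low}$ and $\ep(3-p_{high})^{-1} = O(\sqrt{\ep})$, the resulting inequality takes the form
\begin{equation*}
E_{p_{high}}[\psi_0](u_2) + \int_{u_1}^{u_2} E_{p_{high}-1}[\psi_0]\,du \lesssim E_{p_{high}}[\psi_0](u_1) + u_1^{-p_{low}} + \sqrt{\ep}\, u_1^{-\delta} \Bigl(\int_{u_1}^{u_2} E_{p_{high}}[\psi_0](u)\,du\Bigr)^{1/2},
\end{equation*}
with $\delta := (p_{low}-p_{high}+1)/2 = O(\sqrt{\ep})$.

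The proof is concluded by a Grönwall argument on a dyadic sequence $u_n = 2^n u_0$: Young's inequality with a weight $\sim \sqrt{\ep}$, applied after taking the supremum over $u_2 \in [u_n, u_{n+1}]$, converts the base inequality into a recursion $E_{p_{high}}[\psi_0](u_{n+1}) \leq (1 + C\sqrt{\ep})\, E_{p_{high}}[\psi_0](u_n) + \mathrm{source}_n$, whose product over $n \sim \log_2(u/u_0)$ steps produces the polynomial growth $u^{O(\sqrt{\ep})}$ claimed by \eqref{high.estp}, with source contributions sharply estimated through the integration formula of Lemma~\ref{calc.lemma}. The decay \eqref{high.est0} for $0 \leq q \leq p_{high} - 1$ then follows by a dyadic mean-value argument in the spirit of Lemma~\ref{lemma.hierarchy}, adapted to accommodate the slow $u^{2\delta}$ growth of the top-weighted energy. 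The main obstacle lies in this Grönwall step itself: because the error carries the awkward factor $\sqrt{\int_{u_1}^{u_2} E_{p_{high}}}$ rather than a pointwise boundary quantity, closing the iteration with only $u^{O(\sqrt{\ep})}$ growth---and not the near-linear growth a naive iteration would give---demands a delicate calibration of the Young splitting parameter, the dyadic step, and the source integrals captured by Lemma~\ref{calc.lemma}, in the spirit of (but streamlined from) \cite{Moi2}.
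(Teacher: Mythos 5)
Your setup is correct and matches the paper's: the $r^p\rd_v$ multiplier via Lemma~\ref{lemma.calc1} with $\sDelta\psi_0=0$, Cauchy--Schwarz on the bulk error, the Hardy inequality with exponent $p-1$ producing the $(3-p)^{-1}$ factor, the averaging of Lemma~\ref{lemma.average} for the $\gamma_R$ boundary, and the choice $p_{high}\leq 1+p_{low}$ with $3-p_{high}\sim\sqrt{\ep}$. But the Gr\"onwall step, which you yourself flag as ``the main obstacle,'' is where the proposal has a genuine gap, and the calibration you gesture at cannot fix it with the inputs you allow yourself. Your base inequality carries the coefficient $\sqrt{\ep}\,u_1^{-\delta}$ in front of $\bigl(\int_{u_1}^{u_2}E_{p_{high}}\bigr)^{1/2}$, because you estimate the factor $\tilde{E}_{p_{high}-1}(u_1)^{1/2}$ using only the \emph{lower} hierarchy (Corollary~\ref{energy.low.cor} plus Corollary~\ref{low.cor.decay}), which gives $\tilde{E}_{p_{high}-1}(u_1)\ls u_1^{-(p_{low}-p_{high}+1)}=u_1^{-2\delta}$ with $\delta=O(\sqrt{\ep})$ --- essentially no decay. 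On a dyadic block $[u_n,2u_n]$ one then has $\int_{u_n}^{u_2}E_{p_{high}}\leq u_n\sup E_{p_{high}}$, so after Young's inequality the error is $\frac14\sup E_{p_{high}}+C\ep\,u_n^{1-2\delta}$; the recursion becomes $E_{p_{high}}(u_{n+1})\leq (1+C\sqrt{\ep})E_{p_{high}}(u_n)+C\ep\,u_n^{1-2\delta}$, whose source terms sum to $\sim \ep\,u^{1-2\delta}$. This is exactly the near-linear growth you mention, and no choice of Young parameter, dyadic step, or use of Lemma~\ref{calc.lemma} removes it, because the coefficient $a\sim\sqrt{\ep}\,u_n^{-\delta}$ is simply too large: in the paper's notation the quantity $\frac{a^2}{2b}(u-u_n)$ must stay bounded for the $F(x)=x-\ln(1+x)$ inversion to yield only $u^{2\delta}$ growth, and with your $a$ it grows like $\ep\,u_n^{1-4\delta}$.

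The missing idea is a \emph{double} induction along the dyadic sequence: one must bootstrap not only $\tilde{E}_{p}(u_k)\leq\Delta^2u_k^{2\delta}$ but simultaneously the much stronger $\tilde{E}_{p-1}(u_k)\leq D^2\Delta^2u_k^{-1+2\delta}$ (hypotheses \eqref{II} and \eqref{I} in the paper). The extra factor $u_k^{-1/2}$ this puts into the coefficient, $a=\Delta D\eta(\ep)u_n^{-1/2+\delta}$, is precisely what makes $\frac{a^2}{2b}(u-u_n)\ls\Delta^2D^2\eta^2u_n^{2\delta}/b$ bounded and lets Lemma~\ref{calc.lemma} close the top estimate with only $u^{2\delta}$ growth. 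Crucially, \eqref{I} at step $n+1$ is \emph{not} obtained from the lower hierarchy: it is recovered from the $p$-level estimate itself, whose left-hand side also controls the bulk $\int_{u_n}^{u_{n+1}}E_{p-1}[\psi_0]\,du\ls\Delta^2\eta(\ep)u_n^{2\delta}$; the mean-value theorem then gives a good slice $u_n^*$ with $E_{p-1}[\psi_0](u_n^*)\ls u_n^{-1+2\delta}$, and the boundedness statement of Corollary~\ref{energy.low.cor} propagates this to all of $[u_n,u_{n+1}]$. Without coupling the two levels in this way the argument does not close.
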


	\begin{proof}
Let $2< p \leq 1 + p_{low}<3$. Similarly to the proof of Proposition~\ref{lower.prop}, we use the multiplier $r^p \rd_v \psi$, and we apply		Lemma~\ref{lemma.calc1} to obtain, using \eqref{H0}, \eqref{H2}  \begin{equation*}\begin{split}
		&\bigl| \int_{C_{u_2}} r^p|\rd_v \psi_0|^2-   \int_{C_{u_1}}  r^p|\rd_v \psi_0|^2\bigr| +  \int_{\D(u_1,u_2)}    r^{p-1} |\rd_v \psi_0|^2 \lesssim      \boundaryzero \\ &+ \ep \int_{\D(u_1,u_2)}    r^{p-1} |\rd_v \psi_0|^2 + \ep  \left[  \int_{\D(u_1,u_2)}    r^{p} |\rd_v \psi_0|^2\right]^{1/2} \left[\int_{\D(u_1,u_2)}    r^{p-4} (|\psi_0|^2+|\rd_u \psi_0|^2)\right] ^{1/2}
	\end{split}
\end{equation*} Then, by Proposition~\ref{Hardy.prop} applying with $q=p-1$, and the fact that $p<3$, we obtain
\begin{equation*}\begin{split}
		&\bigl| \int_{C_{u_2}} r^p|\rd_v \psi_0|^2-   \int_{C_{u_1}}  r^p|\rd_v \psi_0|^2\bigr| +  \int_{\D(u_1,u_2)}    r^{p-1} |\rd_v \psi_0|^2\lesssim      \boundaryzero \\ & + \ep [3-p]^{-1}  \left[  \int_{\D(u_1,u_2)}    r^{p} |\rd_v \psi_0|^2\right]^{1/2} \left[\int_{\D(u_1,u_2)}    r^{p-2}   |\rd_v \psi_0|^2\right] ^{1/2}.
	\end{split}
\end{equation*} Then, by an averaging argument in $R$ similar to the one used in Corollary~\ref{energy.low.cor},% (dropping the term $\int_{\DD(u_1,u_2)} r^{p-1} |\rd_v \psi_0|^2$)
\begin{equation}\label{energy.ineq}\begin{split}
&  \int_{u_1}^{u_2} E_{p-1}[\psi_0](u) du+\bigl| E_p[\psi_0](u_2)-E_p[\psi_0](u_1)\bigr| \\ &\ls \bigl| E_p[\psi_0](u_2)-E_p[\psi_0](u_1)\bigr|  \ls     \ep [3-p]^{-1} \left[\int_{u_1}^{u_2} E_p[\psi_0](u)du \right]^{\frac{1}{2}}\left[ \int_{u_1}^{u_2}E_{p-2}[\psi_0] du \right]^{\frac{1}{2}}+  E(u_1).\end{split}
\end{equation} Then, applying Corollary~\ref{energy.low.cor}, in view of the fact that $p-1 \leq p_{low}$, we obtain \begin{equation}
\label{energy.ineq2}\begin{split}
	&  \int_{u_1}^{u_2} E_{p-1}[\psi_0](u) du+\bigl| E_p[\psi_0](u_2)-E_p[\psi_0](u_1)\bigr|\\& \ls \bigl| E_p[\psi_0](u_2)-E_p[\psi_0](u_1)\bigr|  \ls     \ep [3-p]^{-1} \left[\int_{u_1}^{u_2} E_p[\psi_0](u)du \right]^{\frac{1}{2}}\left[ \tilde{E}_{p-1}(u_1) \right]^{\frac{1}{2}}+  E(u_1).\end{split}
\end{equation} 
In what follows, we take $(u_n)_{n \in \mathbb{N}}$, a dyadic sequence, and $u_2=u\in [u_n,u_{n+1}]$ and $u_1=u_{n}$.
For some large $\Delta>0$, $D>0$ and small $\delta\in(0,\frac{1}{2})$ to be determined later, we  introduce the induction hypotheses  \begin{equation}\label{I}
	\tilde{E}_{p-1}(u_k) \leq D^2 \cdot \Delta^2  u_k^{-1+ 2 \delta},
\end{equation}\begin{equation}\label{II}
\tilde{E}_{p}(u_k) \leq  \Delta^2  u_k^{ 2 \delta}.
\end{equation} \eqref{I}, \eqref{II} are obviously satisfied for $k=0$ (providing $\Delta$ is large enough, depending on $u_0$, where we recall $u_0>1$) and we will assume it holds for all $0\leq k \leq n$. Then, combining \eqref{I} and \eqref{energy.ineq2} shows that there exists $C>0$ (independent of $\ep$ and $p$) and $\eta(\ep,p)= C \ep [3-p]^{-1}$ such that
\begin{equation}\label{g2}
 E_p[\psi_0](u) \leq       \underbrace{ \Delta\cdot D\cdot \eta(\ep)\cdot u_n^{-\frac{1}{2}+\delta}}_{=a(u_n,\Delta,\ep)} \left[\int_{u_n}^{u} E_p[\psi_0](u')du' \right]^{\frac{1}{2}}+\underbrace{E_p[\psi_0](u_n)+ C   E(u_n)}_{=b(u_n)}.
\end{equation} 
Therefore, as an application of Lemma~\ref{calc.lemma}
	\begin{equation}
	\left(\int_{u_n}^{u} E_{p}[\psi_0](u')du' \right)^{\frac{1}{2}} \leq \frac{a}{2}(u-u_n)+\frac{b}{a}\log \left(1+\frac{a}{b}(\int_{u_n}^{u} E_{p}[\psi_0](u')du' )^{\frac{1}{2}}\right).
\end{equation}  Denoting $X=  \frac{a}{b}\left(\int_{u_n}^{u} E_{p}[\psi_0](u')du' \right)^{\frac{1}{2}} $ and $F(x) = x-\ln(1+x)$, a monotonically increasing function of (positive) inverse $F^{-1}(y)$. %= -1 - W(-\exp(1+y))$, where $W(z)$ (product-log or Lambert function) is defined as the inverse of $z\rightarrow z \ln(z)$. 
Thus, we have \begin{equation}
F(X) \leq \frac{a^2}{2b} [u-u_n] \leq \frac{\Delta^2 D^2 \eta^2}{2}\ \frac{u_n^{2\delta}}{b(u_n)},
\end{equation} from which we get  $X \leq F^{-1}\left(\frac{\Delta^2 \eta^2}{2}\ \frac{u_n^{2\delta}}{b(u_n)}\right)$, which translates into \begin{equation}\label{lambert}
a \left(\int_{u_n}^{u} E_{p}[\psi_0](u')du' \right)^{\frac{1}{2}} \leq b\cdot F^{-1}\left(\frac{\Delta^2  D^2\eta^2}{2}\ \frac{u_n^{2\delta}}{b(u_n)}\right).
\end{equation} Note that $F^{-1}(y) \leq 2\sqrt{y}(1+\sqrt{y})$ for all $y\geq 0$ (this follows easily from the inequality  $z\leq 2\sqrt{z}(1+\sqrt{z})-\ln(1+2\sqrt{z}(1+\sqrt{z}))$, for all $z\geq 0$), hence \eqref{lambert}, combined with \eqref{g2} gives \begin{equation}\label{lambert2}
 E_p[\psi_0](u) \leq a \left(\int_{u_n}^{u} E_{p}[\psi_0](u')du' \right)^{\frac{1}{2}}+b \leq  u_n^{\delta} \cdot\Delta\cdot D \cdot \eta(\ep) \cdot \left[ b^{\frac{1}{2}}(u_n)\sqrt{2}+ \Delta\cdot D \cdot\eta(\ep)\cdot u_n^{\delta}\right]+b(u_n).
\end{equation} 

Now, note that, as a consequence of \eqref{II} and Corollary~\ref{low.cor.decay}, there exists $C'>0$ independent of $\ep$ such that \begin{equation}\label{II.cons}
b(u_n) \leq \Delta^2 u_n^{2\delta} + C' \cdot u_n^{-p_{low}}
\end{equation}
So, combining \eqref{lambert2} and \eqref{II.cons} gives, for $u$ large enough and $D\eta(\ep)$ small enough
 \begin{equation}\label{lambert3}
	E_p[\psi_0](u)  \leq  u_n^{2\delta} \cdot\Delta^2\cdot\left(1+ 2
	\Delta \eta(\ep)\right)\leq  u_{n+1}^{2\delta} \cdot\Delta^2\cdot \eta(\ep) \cdot  \frac{1+ 2
		\Delta \eta(\ep)}{2^{2\delta}}.
\end{equation} To achieve a small $D\eta(\ep)$, we take $D$ to be independent of $\ep$ and $3-p \gtrsim \sqrt{\ep}$, so that $\eta(\ep) = O(\sqrt{\ep})$. We will also take $\Delta$ to be independent of $\ep$ and choose $\delta = O(\sqrt{\ep})$ so that $$ 2^{2\delta} = [1+2\Delta \eta(\ep)]^2$$.

In view of Corollary~\ref{low.cor.decay},  the left-hand-side of \eqref{lambert3} also controls $\tilde{E}_p$ and, therefore, \eqref{II} is satisfied for $k=n+1$, after taking $\ep$ (hence $\eta(\ep)$) small enough. Now, to prove \eqref{I} for $k=n+1$, we come back to \eqref{energy.ineq2}, which we combine with \eqref{lambert3} to obtain \begin{equation}\label{lambert4}
\int_{u_n}^{u} E_{p-1}[\psi_0](u') du'+ E_p[\psi_0](u)\ls \Delta^2\cdot \eta(\ep)\cdot u_n^{2\delta}.
\end{equation}
Then, by the mean-value theorem, there exists $u_n^{*} \in [u_n,u_{n+1}]$ such that \begin{equation}
	E_{p-1}[\psi_0](u_n^{*}) \lesssim \Delta^2\cdot \eta(\ep)\cdot u_n^{-1+2\delta},
\end{equation} and by Corollary~\ref{energy.low.cor}, we deduce that for all $u\in [u_n,u_{n+1}]$,
\begin{equation}
	E_{p-1}[\psi_0](u) \lesssim \Delta^2\cdot \eta(\ep)\cdot u^{-1+2\delta},
\end{equation} thus, \eqref{II} is satisfied for $k=n+1$ if $\ep$ (hence $\eta(\ep)$) is small enough and $D$ is large enough. Thus, \eqref{high.estp} and \eqref{high.est0} for $q=p_{high}-1$ are proved. In view of the fact that $p_{high}-1 \in (1,2)$, we can finally apply Lemma~\ref{lemma.hierarchy} and deduce \eqref{high.est0} for any $0 \leq q \leq p_{high}-1$.

%which means, using the definition of $a(u_1,\Delta,\ep)$ and $b(u_1)$ that \begin{equation}\label{lambert3}	E_p[\psi_0](u_2) \leq   E_p[\psi_0](u_1)+ C E(u_1)+\frac{ \Delta^2\cdot \eta^2(\ep)}{2}\ \frac{u_2-u_1}{u_1^{1-\delta}}, \end{equation} We take $u_n$, a dyadic sequence, and $u_2=u\in [u_n,u_{n+1}]$ and $u_1=u_{n}$ in \eqref{lambert3} to obtain
% \begin{equation}\label{lambert4}\sup_{ u \in [u_n,u_{n+1}]}E_p[\psi_0](u) \leq   E_p[\psi_0](u_n)+ C E(u_n)+\frac{ \Delta^2\cdot \eta^2(\ep)}{2}\ u_n^{\delta}.\end{equation} 

%Applying \eqref{lambert4} on $[u_{n-1},u_n]$  and iterating the argument, will give for any $0\leq k \leq n$ \begin{equation}\label{lambert5}\sup_{ u \in [u_k,u_{k+1}]}E_p[\psi_0](u) \leq   E_p[\psi_0](u_0)+ C E(u_n)+\frac{ \Delta^2\cdot \eta^2(\ep)}{2}\ u_n^{\delta}, \end{equation} 
		
		\end{proof}
		Then, we deduce a corollary From Proposition~\ref{prop.high} that will end-up being useful in the next section.\begin{cor}\label{cor.high}
			Let $0\leq q\leq 3+ \eta_0$, for some $\eta_0 \in (0,1)$. Then for all $u\geq u_0$, \begin{equation}
				\int_{\DD(u,\infty)} r^{q-6} \left[ |\psi_0|^2+ |\rd_u \psi_0|^2 + r^2 |\rd_v \psi_0|^2\right] \lesssim u^{-p_{high} +2\delta+ \eta_0}.
			\end{equation}
		\end{cor}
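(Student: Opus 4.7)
I would split the integrand into three pieces, $r^{q-4}|\rd_v\psi_0|^2$, $r^{q-6}|\psi_0|^2$, and $r^{q-6}|\rd_u\psi_0|^2$. The first is handled directly by combining the $r^p$-weighted hierarchy of Corollary~\ref{energy.low.cor} with the energy decay of Proposition~\ref{prop.high}. The other two are then reduced to the first via the two Hardy inequalities of Proposition~\ref{Hardy.prop}, exploiting the crucial fact that $\snabla\psi_0 = 0$ because $\phi_0$ is spherically symmetric.

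\textbf{Outgoing-derivative piece.} Since the coefficients in \eqref{wave.main} are spherically symmetric, $\phi_0$ itself solves \eqref{wave.main}, so Corollary~\ref{energy.low.cor} and Proposition~\ref{prop.high} apply to $\phi_0$. Applying \eqref{rp.low.boundedness} with $p=\eta_0\in(0,p_{low}]$ and sending $u_2\to+\infty$ gives
$$\int_u^{+\infty} E_{\eta_0-1}[\psi_0](u')\,du' \;\lesssim\; \eta_0^{-1}\tilde{E}_{\eta_0}[\phi_0](u) \;\lesssim\; u^{-p_{high}+2\delta+\eta_0},$$
the last inequality being \eqref{high.est0}. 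Since $r\geq R\geq 1$ on $\DD(u,\infty)$ and $q-4\leq \eta_0-1$ whenever $q\leq 3+\eta_0$, one has $r^{q-4}\leq r^{\eta_0-1}$ pointwise, so $\int_{\DD(u,\infty)} r^{q-4}|\rd_v\psi_0|^2 \lesssim u^{-p_{high}+2\delta+\eta_0}$.

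\textbf{Zeroth-order and ingoing-derivative pieces.} For $\int_{\DD(u,\infty)} r^{q-6}|\psi_0|^2$, I apply the standard Hardy inequality \eqref{Hardy5} to $f=\psi_0$ with exponent $q-3$: the decay hypothesis $\lim_{v\to+\infty} r^{q-5}\psi_0=0$ holds because $\psi_0=r\phi_0$ converges to the radiation field and $q<5$, while $|5-q|^{-2}$ is bounded for $q\leq 3+\eta_0$. This produces $\int r^{q-4}|\rd_v\psi_0|^2$ (already controlled) plus a boundary term along $\gamma_R$ which, by Lemma~\ref{lemma.average} and Proposition~\ref{prop.high}, is bounded by $R^{q-5}E[\phi_0](u)\lesssim u^{-p_{high}+2\delta}$. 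For $\int_{\DD(u,\infty)} r^{q-6}|\rd_u\psi_0|^2$ I invoke the novel Hardy-type inequality \eqref{Hardy.in} with exponent $q-3<2$; crucially, $\snabla\psi_0=0$ by spherical symmetry, so the angular-derivative term on the right-hand side of \eqref{Hardy.in} vanishes, leaving $\ep^2\int r^{q-6}|\rd_v\psi_0|^2$ (bounded by $\ep^2$ times the outgoing-derivative piece, via $r^{q-6}\leq r^{q-4}$) plus a $\gamma_R$-boundary term treated exactly as above. Summing the three contributions yields the claimed estimate.

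\textbf{Main obstacle.} The only delicate point is verifying that the constants appearing in \eqref{Hardy5} and \eqref{Hardy.in}, namely $|5-q|^{-1}$ and $(2-(q-3))^{-1}$, remain uniformly bounded over the whole range $q\in[0,3+\eta_0]$. This is precisely why the hypothesis imposes $\eta_0<1$: pushing $q$ to $5$ would cause these constants to blow up and the Hardy reduction would fail.
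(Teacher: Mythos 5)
Your proof is correct and follows essentially the same route as the paper's: reduce everything to $\int_{\DD(u,\infty)} r^{q-4}|\rd_v\psi_0|^2$ via the two Hardy inequalities of Proposition~\ref{Hardy.prop} (with the $\gamma_R$-boundary terms absorbed into $E(u)$ by Lemma~\ref{lemma.average}), then bound that bulk term by $\int_u^{\infty}E_{\eta_0-1}[\psi_0]\,du'\lesssim \tilde{E}_{\eta_0}(u)\lesssim u^{-p_{high}+2\delta+\eta_0}$ using Corollary~\ref{energy.low.cor} and Proposition~\ref{prop.high}. The only inaccuracy is your closing remark: since $q\leq 3+\eta_0<4$ the Hardy constants $|5-q|^{-1}$ are bounded regardless, and the real role of $\eta_0<1$ is to keep $\eta_0\leq p_{high}-1$ (and $\eta_0\leq p_{low}$) so that \eqref{high.est0} and the low hierarchy apply to $\tilde{E}_{\eta_0}$.
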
\begin{proof}
			By Proposition~\ref{Hardy.prop} (Hardy inequality) and Lemma~\ref{lemma.average}, we have 
			\begin{equation}\begin{split}
					&	\int_{\DD(u,\infty)} r^{q-6} \left[ |\psi_0|^2+ |\rd_u \psi_0|^2+r^2 |\rd_v \psi_0|^2 \right] \lesssim \int_{\DD(u,\infty)} r^{q-4} |\rd_v \psi_0|^2 + \int_u^{+\infty} \left[|\psi_0|^2+ |\rd_u \psi_0|^2\right](u',v_R(u')) du'\\ &  \lesssim \int_{\DD(u,\infty)} r^{-1+ \eta_0} |\rd_v \psi_0|^2 + E(u) \lesssim \int_u^{+\infty} E_{\eta_0-1}[\psi_0](u') du' + E(u).\end{split} 
			\end{equation} Now by Corollary~\ref{energy.low.cor} and Proposition~\ref{prop.high}, we have $$\int_u^{+\infty} E_{\eta_0-1}[\psi_0](u') du' \lesssim  \tilde{E}_{\eta_0}(u) \lesssim u^{-p_{high} + 2\delta +\eta_0}, $$ which concludes the proof. 
		\end{proof}
		
	Finally,	from Proposition~\ref{prop.high}, we deduce the proof of \eqref{pointwise.est} when restrict to $\phi_0$, the spherical average.
		\begin{cor}\label{cor.rad.pointwise}
The following estimate holds: for all $u\geq u_0$, $v\geq v_R(u)$ \begin{equation}\label{pointwise.est.0}
	r |\phi_0|(u,v) \lesssim u^{\frac{1-p_{high}}{2} + \frac{\delta(\ep)}{2}}.
\end{equation}
		\end{cor}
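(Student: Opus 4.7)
The plan is to apply the fundamental theorem of calculus along each outgoing cone $C_u$, writing
$$ \psi_0(u,v) = \psi_0(u, v_R(u)) + \int_{v_R(u)}^v \rd_v \psi_0(u, v')\, dv', $$
where $\psi_0 = r\phi_0$, and to control both contributions using Proposition~\ref{energy.decay.prop} together with the decay estimate \eqref{high.est0} of Proposition~\ref{prop.high}.

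For the boundary contribution at $\gamma_R$, I apply Proposition~\ref{energy.decay.prop} to the spherical average $\phi_0$, using that $\|\phi_0(u,v,\cdot)\|_{L^2(\mathbb{S}^2)} = \sqrt{4\pi}\,|\phi_0|(u,v)$ because $\phi_0$ is $\omega$-independent. Choosing $\gamma = \tfrac{1}{2} - \tfrac{\delta(\ep)}{2} \in (0, \tfrac{1}{2})$ and applying \eqref{high.est0} with $q = 1 - \delta(\ep) \leq p_{high} - 1$ gives
$$ R^{1 - \delta/2}\, |\phi_0|(u, v_R(u)) \lesssim \tilde{E}_{1 - \delta}[\phi_0](u)^{1/2} \lesssim u^{(1 - p_{high})/2 + \delta(\ep)/2}, $$
so that $|\psi_0|(u, v_R(u)) \lesssim u^{(1 - p_{high})/2 + \delta(\ep)/2}$. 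For the integral contribution, I use Cauchy--Schwarz with weight $r^{1 + \mu}$ for a small $\mu > 0$:
$$ \int_{v_R(u)}^v |\rd_v \psi_0|(u, v')\, dv' \leq \left( \int_{v_R(u)}^\infty r^{-1 - \mu}\, dv' \right)^{1/2} E_{1 + \mu}[\psi_0](u)^{1/2}, $$
where the first factor is a $u$-independent constant and the second is bounded by $u^{(1 + \mu - p_{high} + 2\delta)/2}$ via \eqref{high.est0}. Summing both contributions and suitably rescaling $\delta(\ep) = O(\sqrt{\ep})$ to absorb constants then yields the claimed pointwise bound $r|\phi_0|(u,v) = |\psi_0|(u,v) \lesssim u^{(1 - p_{high})/2 + \delta(\ep)/2}$.

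The main technical subtlety is the integral term: the weighted Cauchy--Schwarz estimate requires $\mu > 0$ strictly to ensure integrability of $r^{-1-\mu}$ on $[v_R(u), \infty)$, producing a loss of order $\mu/2$ in the decay exponent relative to the sharp rate obtained at the boundary. Since $\delta(\ep) = O(\sqrt{\ep})$ is only determined up to multiplicative constants, this loss is harmless and can be absorbed into an appropriate final definition of $\delta(\ep)$, keeping all implicit constants independent of $\ep$.
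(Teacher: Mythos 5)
Your proposal is correct and follows essentially the same route as the paper: the fundamental theorem of calculus along $C_u$ plus a weighted Cauchy--Schwarz for the tail, with the boundary term at $\gamma_R$ controlled by Proposition~\ref{energy.decay.prop} and both pieces closed via \eqref{high.est0} of Proposition~\ref{prop.high}. The only (immaterial) differences are your choice of $\gamma$ near $\tfrac12$ rather than near $0$ for the boundary term, and your explicit acknowledgment that the $O(\sqrt{\ep})$ losses from $\mu$ and $\delta$ must be absorbed into the final exponent --- a point the paper glosses over in the same way.
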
\begin{proof}   We write, by the Cauchy-Schwarz inequality for any $\eta>0$ \begin{equation}
			|\psi_0|(u,v) \lesssim_{\eta}  R	|\phi_0|(u,v_R(u)) +  [\int_{v_R(u)}^{+\infty} r^{1+\eta} |\rd_v \psi_0|^2]^{1/2}.
		\end{equation}
 Then, combining Proposition~\ref{energy.decay.prop} with $\gamma= \eta$ and Proposition~\ref{prop.high}, to  \begin{equation}
|\phi_0|(u,v_R(u)) \lesssim u^{-\frac{p_{high}}{2} + \delta+ \eta}\lesssim u^{-\frac{3}{2} + \eta+ O(\sqrt{\ep})}.
\end{equation} By Proposition~\ref{prop.high}, we also obtain 
\begin{equation}
\int_{v_R(u)}^{+\infty} r^{1+\eta} |\rd_v \psi_0|^2(u,v) dv \lesssim u^{1+\eta - p_{high}}\lesssim u^{-2 + \eta + O(\sqrt{\ep})}
\end{equation} Combining the two concludes the proof of \eqref{pointwise.est.0}, choosing $\eta=\delta=O(\sqrt{\ep})$.
		\end{proof}
		
		\subsection{Energy and radiation field pointwise decay statements  of Theorem~\ref{main.thm}}
		
		Finally, we obtain the proof of \eqref{energy.est} and \eqref{pointwise.est} in Theorem~\ref{main.thm}. First, \eqref{energy.est} results from  an immediate application of Proposition~\ref{prop.ang} and Proposition~\ref{prop.high}, noting that \begin{equation}
			E[\phi](u) =  	E[\phi_0](u)+ 	E[\phi_{\geq 1}](u)  \lesssim u^{-2p_{low}}+ u^{-p_{high} + 2\delta} \lesssim u^{-3+ O(\sqrt{\ep})}.
		\end{equation}
		Then, \eqref{pointwise.est} is obtained as an immediate application of Corollary~\ref{cor.rad.pointwise} and  Corollary~\ref{cor.pointwise.ang}.
	
		\section{Commuted energy  and point-wise decay under extra assumptions}
		In this section, we will utilize \eqref{boundedness.T}, \eqref{ILED.T}, \eqref{H4} to derive faster decay for the $T$-commuted energy \eqref{energy.est.T0} and sharp point-wise decay in the bounded-$r$ region \eqref{pointwise.est2}.	We generalize the approach of Gajic in \cite{inverse.Dejan}, that consists in establishing a new hierarchy for the quantity $\Thetazero= r \rd_v \psi$.
		\subsection{An additional hierarchy}\label{additional.section}
		
		We start to derive a $r^p$ weighted hierarchy for  $\Thetazero$ and $2<p<3$. 
		
		\begin{prop}\label{new.hierarchy.prop} There exists $C>0$ independent of $\ep$ so that for all $2+ C \ep \leq p \leq p_{high}$:
\begin{equation}\label{new.hierarchy}
	\int_{C_{u}} r^p|\rd_v \Thetazero|^2 \lesssim  u^{-p_{high}+p+ 2\delta(\ep)}.
\end{equation} Moreover, let $(u_n)_{n \in \mathbb{N}}$ be a dyadic sequence and $u_n \leq u_1 < u_2 \leq u_{n+1}$. Then 
\begin{equation}\label{new.hierarchy2}
 \int_{u_1}^{u_2}   \int_{C_{u'}} [ r^{p-1} |\rd_v \Thetazero|^2+ r^{p-3} | \Thetazero|^2] du' \lesssim  u_n^{-p_{high}+p+ 2\delta(\ep)}.
\end{equation}
		\end{prop}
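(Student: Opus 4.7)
The plan is to derive a new $r^p$-weighted hierarchy for $\Thetazero = r\rd_v\psi_0$ in the range $2+C\ep \leq p \leq p_{high}$, bootstrapping off the faster decay of $\psi_0$ established in Proposition~\ref{prop.high} and Corollary~\ref{cor.high} and closing it via a Grönwall argument on a dyadic sequence, in the spirit of Section~\ref{high.section}. Since $\sDelta\psi_0 = 0$, equation \eqref{psi.eq} reduces to $\rd_u\rd_v\psi_0 = F_0$ with $F_0 = \frac{\ep}{r^2}(\tw_0\psi_0 + \tw_1\rd_u\psi_0 + \tq\,\Thetazero)$; a direct computation yields $\rd_u\Thetazero = (\rd_u r)r^{-1}\Thetazero + rF_0$, hence
\begin{equation*}
\rd_u\rd_v\Thetazero = \rd_v\bigl[(\rd_u r)r^{-1}\Thetazero + rF_0\bigr],
\end{equation*}
structurally analogous to \eqref{psi.source.commuted2} but with one fewer power of $r$.

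Applying the $r^p\rd_v\Thetazero$ multiplier and integrating on $\DD(u_1,u_2)$ produces, after integration by parts as in Lemma~\ref{lemma.calc2}, a schematic identity
\begin{equation*}
\int_{C_{u_2}} r^p|\rd_v\Thetazero|^2 + c_p\int_{\DD(u_1,u_2)}\bigl(r^{p-1}|\rd_v\Thetazero|^2 + r^{p-3}|\Thetazero|^2\bigr) \ls \int_{C_{u_1}} r^p|\rd_v\Thetazero|^2 + \mathcal{E} + \Gamma,
\end{equation*}
where the coefficient $c_p \sim (p-2) > 0$ in the relevant range $p > 2$ (the sign on the $|\Thetazero|^2$ bulk flipping at $p=2$ because the transport term $(\rd_u r)r^{-1}\Thetazero \approx -r^{-1}\Thetazero$ now supplies the coercivity that angular derivatives contributed in the $\Psi_1$ analysis of Lemma~\ref{lemma.calc2}), $\Gamma$ is a boundary contribution on $\gammaR$ absorbed via Lemma~\ref{lemma.average}, and $\mathcal{E} \sim \ep\int r^{p-2}|\rd_v\Thetazero|(|\psi_0| + |\rd_u\psi_0| + |\Thetazero|) + \ep\int r^{p-1}|\rd_v\Thetazero|^2$.

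I would then treat $\mathcal{E}$ by Cauchy--Schwarz and Hardy. The $|\Thetazero|$-piece and the $\ep\int r^{p-1}|\rd_v\Thetazero|^2$ summand are absorbed directly into the bulk of the LHS for $\ep$ small. The $|\psi_0|$-piece is converted via the standard Hardy inequality (Proposition~\ref{Hardy.prop} with $q = p > 2$) into an $\ep^2(p-2)^{-2}\int r^{p-3}|\Thetazero|^2$ term plus boundary, which is absorbable into the $c_p$-bulk precisely when $p \geq 2 + C\ep$ for $C$ large. The $|\rd_u\psi_0|$-piece is the most delicate, since the ingoing Hardy inequality \eqref{Hardy.in} is only valid for $q < 2$ and Corollary~\ref{cor.high} requires $q \leq 3 + \eta_0$; I would split $\rd_u = T - \rd_v$ so that $|\rd_u\psi_0|^2 \leq 2|T\psi_0|^2 + 2r^{-2}|\Thetazero|^2$, with the $T\psi_0$-contribution controlled via the commuted assumptions \eqref{boundedness.T}--\eqref{ILED.T} together with \eqref{H4}, yielding a bound compatible with the target decay rate $u^{-p_{high}+p+2\delta}$. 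The resulting Cauchy--Schwarz product is then fed into a Grönwall argument on a dyadic sequence $(u_n)$ exactly as in Proposition~\ref{prop.high}: assuming inductively $\int_{C_{u_n}} r^p|\rd_v\Thetazero|^2 \leq \Delta^2 u_n^{-p_{high}+p+2\delta}$, I substitute into the identity on $[u_n,u_{n+1}]$ and invoke Lemma~\ref{calc.lemma} together with the Lambert-$W$ inversion passing from \eqref{g2} to \eqref{lambert2} to close the induction, establishing \eqref{new.hierarchy}; the bulk bound \eqref{new.hierarchy2} then follows directly from the coercive contributions $c_p\int r^{p-1}|\rd_v\Thetazero|^2$ and $c_p\int r^{p-3}|\Thetazero|^2$ on the dyadic interval $[u_n,u_{n+1}]$.

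The main obstacle is the $|\rd_u\psi_0|$-error and, more broadly, the $\rd_v(rF_0)$-contributions in which $\rd_v$ falls on the potential or on $\rd_u\psi_0$: the extra assumption \eqref{H4} and the $T$-commuted energy and ILED estimates \eqref{boundedness.T}--\eqref{ILED.T} introduced at the start of Section~\ref{additional.section} are needed precisely to recover the missing decay for these ingoing derivatives at the sharp rate $u^{-p_{high}+p+2\delta}$. A secondary difficulty is verifying the favorable sign of the $r^{p-3}|\Thetazero|^2$ bulk coefficient for $p > 2$: in the $\Psi_1$ identity \eqref{rp2} the corresponding coercivity was provided by the angular term and required $p < 2$, so the geometric origin of coercivity is genuinely reversed for $\Thetazero$, and the computation must carefully track all $\rd_v^k r$-terms permitted by \eqref{H0}.
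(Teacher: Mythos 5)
Your skeleton is right — the commuted equation for $\Thetazero$, the $r^p\rd_v\Thetazero$ multiplier, the observation that the $r^{p-3}|\Thetazero|^2$ bulk becomes coercive with coefficient $\sim(p-2)$ for $p>2$, and a dyadic Grönwall closure all match the paper — but both of your treatments of the source terms $\ep r^{p-2}|\rd_v\Thetazero|(|\psi_0|+|\rd_u\psi_0|)$ break down, and these are precisely the terms the proposition exists to handle. First, you cannot apply the Hardy inequality \eqref{Hardy5} to $\psi_0$ with $q=p>2$: that inequality requires $\lim_{v\to\infty}r^{q-2}f=0$, and $r^{p-2}\psi_0\to\infty$ whenever the radiation field $\psi_{\mathcal{I}}$ is nonzero, which is the generic case. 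Even setting that aside, your absorption of $\ep^2(p-2)^{-2}\int r^{p-3}|\Thetazero|^2$ into a bulk with coefficient $\sim(p-2)$ requires $(p-2)^3\gtrsim\ep^2$, which fails at your claimed threshold $p=2+C\ep$. The paper avoids both problems by \emph{not} absorbing into the bulk: it Cauchy–Schwarzes the error as $\ep[\int r^{p-4}(|\psi_0|^2+|\rd_u\psi_0|^2)]^{1/2}[\int r^{p}|\rd_v\Thetazero|^2]^{1/2}$, applies Hardy with $q=p-1<2$ (legitimate since $r^{p-3}\psi_0\to0$ for $p<3$) to convert the first factor into $[3-p]^{-1}[\int r^{p-2}|\rd_v\psi_0|^2]^{1/2}$, bounds that by $\tilde{E}_{p-1}[\phi_0](u_1)^{1/2}\lesssim u_1^{(-p_{high}+p-1+2\delta)/2}$ via Proposition~\ref{prop.high}, and then closes with the Grönwall/Lambert argument against the full-weight integral $\int r^p|\rd_v\Thetazero|^2$ — i.e., exactly the mechanism of Proposition~\ref{prop.high}, not a bulk absorption.

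Second, your handling of $|\rd_u\psi_0|$ via $\rd_u=T-\rd_v$ and the commuted assumptions \eqref{boundedness.T}, \eqref{ILED.T}, \eqref{H4} is both circular and insufficient. Circular, because the quantitative decay of $E[T\phi_0]$ is only obtained in Proposition~\ref{T.prop}, which rests on Proposition~\ref{T.prelim.prop}, which in turn uses the very hierarchy you are proving — and the paper explicitly notes that Proposition~\ref{T.prelim.prop} does \emph{not} use \eqref{boundedness.T}--\eqref{H4}, so neither may Proposition~\ref{new.hierarchy.prop}. Insufficient, because \eqref{ILED.T} only controls $\int r^{-\sigma'}|T\phi_0|^2$ with $\sigma'>1$, whereas the error term needs $\int r^{p-4}|T\psi_0|^2=\int r^{p-2}|T\phi_0|^2$ with a positive power $p-2\approx1$. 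Your stated reason for avoiding the ingoing Hardy inequality — that \eqref{Hardy.in} requires $q<2$ — rests on pairing the wrong weights: after the Cauchy–Schwarz above, the relevant weight on $|\rd_u\psi_0|^2$ is $r^{p-4}=r^{(p-1)-3}$, so \eqref{Hardy.in} applies with $q=p-1<2$ and no commuted assumptions are needed. Finally, note that the decay of the data term $\int_{C_{u_1}}r^p|\rd_v\Thetazero|^2$ at the start of each dyadic block, which your induction hypothesis presupposes, is obtained in the paper by first closing the $p=p_{high}$ case, then using the mean-value theorem to find $u_n^*$ with $E_{p_{high}-1}[\Thetazero](u_n^*)\lesssim u_n^{-1+2\delta}$ and interpolating; this step is absent from your outline.
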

		
		\begin{proof}

		We define $\Thetazero=r\rd_v \psi_0$, and assuming $\rd_u \rd_v \psi_0 = F$,  we find that \begin{equation}
	\rd_u \rd_v \Thetazero+\frac{-\rd_u r}{r}\rd_v \Thetazero+ (-r^{-1}\rd_u \rd_v r+ r^{-2} \rd_u r \rd_v r )\Thetazero	=	 \rd_v(r F).
		\end{equation} By \eqref{r^2.dF.lemma} and \eqref{H0}, we have, choosing $R(\ep)$ large enough: 
			\begin{equation}\label{theta.eq}
			\bigl|\rd_u \rd_v \Thetazero+\frac{1}{r}\rd_v \Thetazero-r^{-2}\Thetazero\bigr| \lesssim  \frac{\ep}{r^{2}}\left( |\psi_0| + |\rd_u \psi_0| + |\Thetazero|+ r|\rd_v\Thetazero|\right).
		\end{equation}

		Now, multiply  \eqref{theta.eq} by $r^p \rd_v \Thetazero$ and integrate on $\DD(u_1,u_2)$,  in the same fashion as in Lemma~\ref{lemma.calc1} and Proposition~\ref{lower.prop}, with  additionally an integration by parts in $v$ of the $r^{p-2}[\rd_v \Thetazero]\Thetazero$ term  to obtain\begin{equation*}\begin{split}
				&\int_{C_{u_2}} r^p|\rd_v \Thetazero|^2+ (p+1+O(\ep))\int_{\D(u_1,u_2)}    r^{p-1} |\rd_v \Thetazero|^2+ (p-2+O(\ep))\int_{\D(u_1,u_2)}    r^{p-3} | \Thetazero|^2\lesssim     \int_{C_{u_1}}  r^p|\rd_v \Thetazero|^2\\&+  \ep \left[\int_{\D(u_1,u_2)}   r^{p-4}[ |\psi_0|^2+|\rd_u \psi_0|^2]\right]^{\frac{1}{2}} \left[\int_{\D(u_1,u_2)}   r^{p}|\rd_v \Thetazero|^2\right]^{\frac{1}{2}} + \boundaryzero,
			\end{split}
		\end{equation*}after absorbing various terms into the left-hand-side. In what follows, we  will take $2+ C \ep <p<3$ for some $C>0$ independent of $\ep$ arranged so that all  the terms in the left-hand-side are coercive. We also use Proposition~\ref{Hardy.prop} applied with $q=p-1$ to proceed as in the proof of Proposition~\ref{prop.high} to obtain, noting the fact that $r^{p-2} |\rd_v \psi_0|^2= r^{p-4} \Thetazero^2$ and we also use  Lemma~\ref{lemma.average} to control the term $\boundaryzero$ 
		\begin{equation}\begin{split}\label{HH}
				&\int_{C_{u_2}} r^p|\rd_v \Thetazero|^2+ (p+1+O(\ep))\int_{\D(u_1,u_2)}    r^{p-1} |\rd_v \Thetazero|^2+ (p-2+O(\ep))\int_{\D(u_1,u_2)}    r^{p-3} | \Thetazero|^2\lesssim     \int_{C_{u_1}}  r^p|\rd_v \Thetazero|^2\\&+  \ep[3-p]^{-1} \left[\int_{\D(u_1,u_2)}   r^{p-2} |\rd_v\psi_0|^2\right]^{\frac{1}{2}} \left[\int_{\D(u_1,u_2)}   r^{p}|\rd_v \Thetazero|^2\right]^{\frac{1}{2}} + \tilde{E}(u_1),
			\end{split}
		\end{equation} where in the last line, we have used the fact that $ r^{p-4} |\Thetazero|^2= r^{p-2} |\rd_v\psi_0|^2$. Then, we can apply Proposition~\ref{prop.high} and take $p=p_{high}$ in \eqref{HH}, resulting in 
		\begin{equation}\begin{split}\label{HH2}
				&\int_{C_{u_2}} r^{p_{high}}|\rd_v \Thetazero|^2+ \int_{\D(u_1,u_2)}    r^{p_{high}-1} |\rd_v \Thetazero|^2+ \int_{\D(u_1,u_2)}    r^{p_{high}-3} | \Thetazero|^2\lesssim     \int_{C_{u_1}}  r^{p_{high}}|\rd_v \Thetazero|^2\\&+  \ep[3-p_{high}]^{-1} u_1^{-\frac{1}{2}+\delta} \left[\int_{\D(u_1,u_2)}   r^{p_{high}}|\rd_v \Thetazero|^2\right]^{\frac{1}{2}} + \tilde{E}(u_1),
			\end{split}
		\end{equation}
	%	Then, we can apply again Proposition~\ref{Hardy.prop} with $q=p-1<2$, this time on $\Thetazero$, to obtain 	\begin{equation*}\begin{split}				&\int_{C_{u_2}} r^p|\rd_v \Thetazero|^2+ (p+1+O(\ep))\int_{\D(u_1,u_2)}    r^{p-1} |\rd_v \Thetazero|^2+ (p-2+O(\ep))\int_{\D(u_1,u_2)}    r^{p-3} | \Thetazero|^2\lesssim     \int_{C_{u_1}}  r^p|\rd_v \Thetazero|^2\\&+  \ep[3-p]^{-2} \left[\int_{\D(u_1,u_2)}   r^{p-2} |\rd_v\Thetazero|^2\right]^{\frac{1}{2}} \left[\int_{\D(u_1,u_2)}   r^{p}|\rd_v \Thetazero|^2\right]^{\frac{1}{2}} + \boundary.			\end{split}		\end{equation*}
		
		We can then repeat the proof of Proposition~\ref{prop.high}  to obtain
		\begin{equation}\begin{split}\label{HH2phigh}
			&\int_{C_{u_2}} r^{p_{high}}|\rd_v \Thetazero|^2+ \int_{\D(u_1,u_2)}    r^{p_{high}-1} |\rd_v \Thetazero|^2+ \int_{\D(u_1,u_2)}    r^{p_{high}-3} | \Thetazero|^2\lesssim     u_2^{2\delta(\ep)}.
		\end{split}
		\end{equation}

		Now let $2+C\ep <p< p_{high}$. We write, using \eqref{HH2phigh} \begin{equation*}\begin{split}
				&\int_{C_{u_2}} r^p|\rd_v \Thetazero|^2+\int_{\D(u_1,u_2)}    r^{p-1} |\rd_v \Thetazero|^2+ \int_{\D(u_1,u_2)}    r^{p-3} | \Thetazero|^2\lesssim     \int_{C_{u_1}}  r^p|\rd_v \Thetazero|^2\\&+  \ep \left[\int_{\D(u_1,u_2)}   r^{2p-3-p_{high}}[ |\psi_0|^2+|\rd_u \psi_0|^2]\right]^{\frac{1}{2}} \left[\int_{\D(u_1,u_2)}   r^{p_{high-1}}|\rd_v \Thetazero|^2\right]^{\frac{1}{2}} + E(u_1)\\ & \ls  \int_{C_{u_1}}  r^p|\rd_v \Thetazero|^2+  \ep \left[\int_{\D(u_1,u_2)}   r^{2p-3-p_{high}}[ |\psi_0|^2+|\rd_u \psi_0|^2]\right]^{\frac{1}{2}} u_2^{\delta}+ E(u_1).
			\end{split}
		\end{equation*} Now, note that $2p-3-p_{high}<-1$ so we can still apply Proposition~\ref{Hardy.prop} (Hardy inequality) and obtain\begin{equation}\label{eqint}\begin{split}
			&\int_{C_{u_2}} r^p|\rd_v \Thetazero|^2+\int_{\D(u_1,u_2)}    r^{p-1} |\rd_v \Thetazero|^2+ \int_{\D(u_1,u_2)}    r^{p-3} | \Thetazero|^2\lesssim     \int_{C_{u_1}}  r^p|\rd_v \Thetazero|^2\\ & \ls  \int_{C_{u_1}}  r^p|\rd_v \Thetazero|^2+  \ep \left[\int_{\D(u_1,u_2)}   r^{2p-1-p_{high}} |\rd_v\psi_0|^2\right]^{\frac{1}{2}} u_2^{\delta}+ E(u_1)\ls  \int_{C_{u_1}}  r^p|\rd_v \Thetazero|^2  +  u_2^{\delta} \cdot u_1^{-p_{high}+p},
		\end{split}
		\end{equation} where in the last line, we have used Proposition~\ref{prop.high}. Now,  applying \eqref{eqint} if $u_1<u_2= u \in [u_n,u_{n+1}]$, where $(u_n)_{n \in \mathbb{N}}$ is a dyadic sequence easily gives \begin{equation}\label{eqint2}\begin{split}
			&\int_{C_{u}} r^p|\rd_v \Thetazero|^2\ls  \int_{C_{u_1}}  r^p|\rd_v \Thetazero|^2+     u_n^{-p_{high}+p+\delta}.
		\end{split}
		\end{equation} Note that by \eqref{HH2phigh} applied  to  $u_1=u_{n}$ and $u_2= u_{n+1}$, there exists $u_n^{*} \in [u_n, u_{n+1}]$ such that  \begin{equation}\label{eqint3}
		 \int_{C_{u_{n}^{*}}} r^{p_{high}-1}|\rd_v \Thetazero|^2\lesssim u_n^{-1+2\delta}.
		\end{equation}	By interpolation between \eqref{HH2phigh} with $u_2=u_n^{*}$ and \eqref{eqint3}, we obtain
		\begin{equation}\label{eqint4}
			\int_{C_{u_{n}^{*}}} r^{p}|\rd_v \Thetazero|^2\lesssim u_n^{-p_{high}+p+2\delta}.
		\end{equation} Taking $u_1= u_n^{*}$  in \eqref{eqint2} and combining with \eqref{eqint4} then concludes the proof of the proposition.

				\end{proof}

	%	\begin{equation}			\bigl|\rd_u \rd_v \Thetazero+\frac{-\rd_u r}{r}\rd_v \Thetazero-r^{-2}(r\rd_u \rd_v r- \rd_u r \rd_v r )\Thetazero\bigr| \lesssim  \frac{\ep}{r^{2}}\left( |\psi_0| + |\rd_u \psi_0| + |\Thetazero|+ r|\rd_v\Thetazero|\right).		\end{equation}
%	Introducing the coordinate-system $(\rho,\tu)$ as \begin{equation}		\rho(u,v) = v-u,\ \tu=u.	\end{equation} \begin{equation}	\rd_{\rho} \rd_{\tu} \psi_0 = \rd_\rho^2 \psi_0 + \frac{\ep}{\rho^2} \left( \sum_{i=0}^{1}  W_i(u,\rho)  \rd_{\tu}^i \psi_0+ Q(u,\rho) \rho \rd_\rho \psi_0 \right) \end{equation}	We commute with the vector field $\rho \rd_{\rho}$ and denote $\Thetazero:= \rho \rd_{\rho} \psi_0$, and we find $\Thetazero$ satisfies the following equations \begin{equation} \rd_{\tu}\Thetazero =\rd_{\rho} \Thetazero-\rho^{-1} \Thetazero + \frac{\ep}{\rho} \left( \sum_{i=0}^{1}  W_i(u,\rho)  \rd_{\tu}^i \psi_0+ Q(u,\rho)  \Thetazero \right) 	\end{equation}	\begin{equation}		\rd_{\tu}\rd_{\rho}\Thetazero =\rd_{\rho \rho}^2 \Thetazero+\rho^{-2} \Thetazero- \rho^{-1} (1-\ep Q(u,\rho))\rd_{\rho}\Thetazero + \frac{\ep}{\rho^2} \left( \sum_{i=0}^{1}  \tilde{W}_i(u,\rho)  \rd_{\tu}^i \psi_0+  \sum_{i=0}^{1}  {W}_i(u,\rho)  \rd_{\tu}^i \Thetazero+ \tilde{Q}(u,\rho)  \Thetazero \right) 	\end{equation}	\begin{equation}\tilde{W}_i(u,\rho)= -W_i(u,\rho)+ \rho \rd_{\rho} W_i(u,\rho),\ \tilde{Q}(u,\rho)= -Q(u,\rho)+ \rho \rd_{\rho} Q(u,\rho) 	\end{equation}

		\subsection{Improved decay for the time-derivative}
		In this section, we take advantage of Proposition~\ref{new.hierarchy.prop} to derive improved decay for the $T$ derivative of $\psi_0$, in order to prove \eqref{energy.est.T0} in Theorem~\ref{main.thm}.
			To fix the notations, recall $T\psi_0= \rd_u \psi_0 + \rd_v \psi_0$.  Assuming $\rd_u \rd_v \psi_0 = F$, we obtain
			\begin{equation}\label{T.psi.eq}
				\rd_v T\psi_0 = F + \rd_{v}^2 \psi_0 = F + \rd_v (r^{-1} \Thetazero)=  F + r^{-1}\rd_v  \Thetazero-  r^{-2}[\rd_v r]  \Thetazero.
			\end{equation}
			
			We start by showing improved decay for the $r^p$-weighted energy of $T\phi_0$ along a dyadic sequence. The following result -- Proposition~\ref{T.prelim.prop} below -- does not require the use of the extra assumptions \eqref{boundedness.T}, \eqref{ILED.T}, \eqref{H4}.
		\begin{prop}\label{T.prelim.prop}  There exists $p_T(\ep)\in (2,p_{high})$, $\eta_{T}(\ep)\in (0,1)$, with $p_T(\ep)= 3-O(\sqrt{\ep})$ and $\eta_T(\ep) =O(\sqrt{\ep})$ as $\ep \rightarrow 0$, such that  for all dyadic sequences $(u_n)_{n \in \mathbb{N}}$, there exists $u_n^{*} \in [u_n,u_{n+1}]$ such that
\begin{equation}\label{T.est.dyadic}
\tilde{E}_{p_T}[T\phi_0](u_n^{*}) \lesssim u_n^{-2+\eta_T(\ep)}.
\end{equation} Moreover,  the following estimate holds  for all dyadic sequences $(u_n)_{n \in \mathbb{N}}$	\begin{equation}\label{T.est.dyadic.int}
\int_{\DD(u_n,u_{n+1})}	r^{p_T}|	\rd_v T\psi_0|^2=\int_{u_n}^{u_{n+1}} \tilde{E}_{p_T}[T\phi_0](u) du \lesssim u_n^{-1+2\eta_T(\ep)}.
\end{equation} 
		\end{prop}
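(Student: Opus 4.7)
The plan is to directly estimate $\int_{\DD(u_n,u_{n+1})} r^{p_T}|\rd_v T\psi_0|^2$ using the formula \eqref{T.psi.eq}, the $r^p$-hierarchy for $\Thetazero = r\rd_v\psi_0$ from Proposition~\ref{new.hierarchy.prop} at its lowest available exponent $p = 2+C\ep$, and the two Hardy inequalities of Proposition~\ref{Hardy.prop}; a mean value argument on the dyadic block then produces the pointwise bound \eqref{T.est.dyadic}. The starting observation is that for the spherical average $\psi_0$ one has $\sDelta \psi_0 \equiv 0$, so the source in $\rd_u\rd_v\psi_0 = F$ reduces to $F = \ep r^{-2}(\tw_0 \psi_0 + \tw_1 \rd_u\psi_0 + \tq\, \Thetazero)$. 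Substituting into \eqref{T.psi.eq} yields the pointwise estimate
\begin{equation*}
 r^{p_T}|\rd_v T\psi_0|^2 \lesssim \ep^2\, r^{p_T - 4}\bigl(|\psi_0|^2 + |\rd_u\psi_0|^2\bigr) + r^{p_T - 2}|\rd_v\Thetazero|^2 + r^{p_T - 4}|\Thetazero|^2.
\end{equation*}

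Next, I would fix $p_T := p_{high} - \ep^{1/2}$, so that $p_T - 2 \leq 1 + C\ep$ and $p_T - 4 \leq -1 + C\ep$. For $r \geq R > 1$, monotonicity in the exponent combined with \eqref{new.hierarchy2} applied with $p = 2+C\ep$ bounds the last two terms above, integrated over $\DD(u_n, u_{n+1})$, by $u_n^{-p_{high} + 2 + C\ep + 2\delta(\ep)} = u_n^{-1 + O(\sqrt{\ep})}$. To handle $\int r^{p_T - 4}|\psi_0|^2$, I would apply \eqref{Hardy5} with $q = p_T - 1 < 2$, reducing it to $(3-p_T)^{-2}\int r^{p_T - 4}|\Thetazero|^2$ (already controlled) plus a $\gammaR$ boundary integral. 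The delicate piece is $\int r^{p_T - 4}|\rd_u\psi_0|^2$: the novel Hardy inequality \eqref{Hardy.in} with $q = p_T - 1$, specialized to $\snabla\psi_0 \equiv 0$, collapses to a bulk $\ep^2 (3-p_T)^{-1}\int r^{p_T - 6}|\Thetazero|^2$, which since $p_T - 6 < -1 + C\ep$ is again dominated by $O(\ep^{3/2}) \cdot u_n^{-1 + O(\sqrt{\ep})}$ via the same \eqref{new.hierarchy2}. All the $\gammaR$ boundary contributions are controlled by $\int_{u_n}^{u_{n+1}} E[\phi](u')\, du' \lesssim u_n \cdot u_n^{-p_{low}} = u_n^{-1 + O(\sqrt{\ep})}$, using Lemma~\ref{lemma.average} and Corollary~\ref{low.cor.decay}; assembling everything establishes \eqref{T.est.dyadic.int} with $\eta_T(\ep) = O(\sqrt{\ep})$.

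Finally, the mean value theorem applied to $u \mapsto \int_{C_u} r^{p_T}|\rd_v T\psi_0|^2$ on the dyadic interval $[u_n, u_{n+1}]$ (of length $\sim u_n$) yields some $u_n^* \in [u_n, u_{n+1}]$ with $\int_{C_{u_n^*}} r^{p_T}|\rd_v T\psi_0|^2 \lesssim u_n^{-2 + \eta_T(\ep)}$. The identification with $\tilde{E}_{p_T}[T\phi_0](u_n^*)$ follows from $rT\phi_0 = T\psi_0 - (Tr)\phi_0$ with $Tr = O(r^{-1})$, so $|\rd_v(rT\phi_0)|^2$ equals $|\rd_v T\psi_0|^2$ up to lower-order terms in $\psi_0$ and $\rd_v\psi_0$ absorbed using Proposition~\ref{prop.high}; the unweighted piece $E[T\phi_0](u_n^*)$ is then controlled by an averaging argument in the spirit of Lemma~\ref{lemma.average} applied to the same dyadic block.

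The main obstacle is the bulk term $\int r^{p_T - 4}|\rd_u\psi_0|^2$: no elementary Hardy argument would convert this $\rd_u$-weighted integral into the strictly $\rd_v$-based quantities controlled by the hierarchy for $\Thetazero$. The novel Hardy inequality \eqref{Hardy.in} is tailor-made for this purpose, trading $\rd_u$-control for $\rd_v$-control by integrating the wave equation \eqref{wave.main} by parts at the cost of an $\ep^2$ factor. This factor is just barely sufficient to absorb the $(3-p_T)^{-1} = O(\ep^{-1/2})$ loss incurred in taking $p_T$ near $3$, and it works precisely because the lowest exponent in Proposition~\ref{new.hierarchy.prop} reaches $p = 2 + C\ep$.
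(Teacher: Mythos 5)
Your proposal is correct and follows essentially the same route as the paper: expand $\rd_v T\psi_0$ via \eqref{T.psi.eq}, convert the $\psi_0$ and $\rd_u\psi_0$ bulk terms into $\Thetazero$-quantities using \eqref{Hardy5} and the novel inequality \eqref{Hardy.in} (which trivializes for the spherical average since $\snabla\psi_0=0$), invoke the hierarchy of Proposition~\ref{new.hierarchy.prop}, and finish with the mean-value theorem on the dyadic block. The only cosmetic differences are that the paper applies \eqref{new.hierarchy2} at the matched exponent $p=p'-1+2\nu$ rather than at the bottom exponent $2+C\ep$ plus monotonicity in $r$, and that you spell out the identification of $\int_{C_u} r^{p_T}|\rd_v T\psi_0|^2$ with $\tilde{E}_{p_T}[T\phi_0]$, which the paper leaves implicit.
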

		\begin{proof}

	 Let $p'=3-\nu$, where $\nu \in (0,1)$. We write, using \eqref{H0}
			\begin{equation}\label{formula}
		r^{p'}|	\rd_v T\psi_0| \lesssim r^{p'}F^2+ r^{p'-4}|  \Thetazero|^2+r^{p'-2}|\rd_v  \Thetazero|^2 \lesssim \ep^2 r^{p'-4} \left(|\psi_0|^2 + |\rd_u \psi_0|^2\right)  + r^{p'-4}|  \Thetazero|^2+r^{p'-2}|\rd_v  \Thetazero|^2.
		\end{equation}  for $u_1<u_2$, we integrate \eqref{formula} on $\DD(u_1,u_2)$, and, making use of Proposition~\ref{Hardy.prop} (Hardy inequality) with $q=p'-1$, we obtain 
		\begin{equation}
		\int_{\DD(u_1,u_2)}	r^{p'}|	\rd_v T\psi_0|^2 \lesssim \int_{\DD(u_1,u_2)}  r^{p'-4} |\Thetazero|^2+ r^{p'-2}|\rd_v  \Thetazero|^2+\boundaryzero,
		\end{equation}  which turns into the following estimate after applying Lemma~\ref{lemma.average}
		\begin{equation}\label{formula2}
			\int_{\DD(u_1,u_2)}	r^{p'}|	\rd_v T\psi_0|^2 \lesssim \int_{\DD(u_1,u_2)}  r^{p'-4} |\Thetazero|^2+ r^{p'-2}|\rd_v  \Thetazero|^2+E(u_1).
		\end{equation} Now, if $\nu \geq C \ep$, then $p'-1+2\nu \in (2+C \ep, p_{high})$ and thus we have by Proposition~\ref{new.hierarchy.prop} applied to $p= p'-1+2\nu$ that, assuming as in its statement that $u_n \leq u_1 < u_2 \leq u_{n+1}$, where $(u_n)_{n \in \mathbb{N}}$ is a dyadic sequence   \begin{equation}
		\int_{\DD(u_1,u_2)}  r^{p'-4+ 2\eta} |\Thetazero|^2+ r^{p'-2+2\eta}|\rd_v  \Thetazero|^2 \lesssim u_n^{p'-p_{high} -1 + 2\delta(\ep) + 2\eta(\ep) } \lesssim u_n^{-1+ O(\sqrt{\ep})}.
		\end{equation}  Thus, combining with \eqref{formula2} and \eqref{energy.est}, we end up with

			\begin{equation}\label{formula3}
			\int_{\DD(u_1,u_2)}	r^{p'}|	\rd_v T\psi_0|^2 \lesssim u_n^{-1+O(\sqrt{\ep})}.
		\end{equation}  Thus, by the mean-value theorem, there exists $u_n^{*} \in [u_n,u_{n+1}]$ such that 
			\begin{equation}\label{formula4}
		\int_{C_{u_n^{*}}}	r^{p'}|	\rd_v T\psi_0|^2 \lesssim u_n^{-2+O(\sqrt{\ep})},
		\end{equation}  which concludes the proof.
			\end{proof}
			
			Now, we move on to using the extra assumptions \eqref{boundedness.T}, \eqref{ILED.T}, \eqref{H4} to show that the improved decay proved in Proposition~\ref{T.prelim.prop} on a dyadic sequence in fact holds for all  $u$.  Let us denote \begin{equation}\label{Tpsi.wave}
				\rd_u \rd_v T \psi_0 =  \frac{\ep}{r^2}\left(\sum_{i=0}^{1}
				\tw_i(u,v) \cdot   \rd_u^{i}T\psi_0+ 
				\tq(u,v) \cdot r \rd_v T\psi_0 \right) +\ep  F_0.
			\end{equation} As a consequence of the assumptions \eqref{H0}, \eqref{H2}, \eqref{H4} note that \begin{equation}\label{F0.est}
|F_0| \lesssim   r^{-3} (\sum_{i=0}^{1}
( |Tr||\tw_i|+  r|T\tw_i|) \cdot   |\rd_u^{i}\psi_0|+ 
( |Tr|q|+  r|Tq|) \cdot r |\rd_v \psi_0|)\ls r^{-3} (\sum_{i=0}^{1}
   |\rd_u^{i}\psi_0|+ 
 r |\rd_v \psi_0|).
			\end{equation}
		\begin{prop}
\label{T.prop} Assume \eqref{boundedness.T}, \eqref{ILED.T}, \eqref{H4} are satisfied. Then, there exists $p_{T}(\ep)\in (2,p_{high})$, $\eta_{T}(\ep)\in (0,1)$, with $p_T(\ep)= 3-O(\sqrt{\ep})$ and $\eta_T(\ep) =O(\sqrt{\ep})$ as $\ep \rightarrow 0$, such that 
\begin{equation}\label{T.est}
	\tilde{E}_{p_T}[T\phi_0](u) \lesssim u^{-2+\eta_T(\ep)}.
\end{equation} Moreover, for all $0 \leq q \leq p_T-1$,  \begin{equation}\label{T.est2}
\tilde{E}_{q}[T\phi_0](u) \lesssim u^{-2-p_T+\eta_T(\ep)+q}.
\end{equation}
		\end{prop}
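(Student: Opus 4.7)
The plan is to mirror the Grönwall/$r^p$-hierarchy argument of Proposition~\ref{prop.high}, but now applied to $T\psi_0$, treating \eqref{Tpsi.wave} as a wave-type equation with an \emph{inhomogeneity} $\ep F_0$ whose size is controlled via \eqref{F0.est} and the already-established decay for $\psi_0$ (Proposition~\ref{prop.high} and Corollary~\ref{cor.high}). The starting datum for the induction will be supplied by Proposition~\ref{T.prelim.prop}, which provides \eqref{T.est.dyadic} along the dyadic sequence $u_n^{*}$.

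The first step is to multiply \eqref{Tpsi.wave} by $r^p \rd_v T\psi_0$ and integrate on $\DD(u_1,u_2)$, exactly as in Lemma~\ref{lemma.calc1} and the beginning of the proof of Proposition~\ref{prop.high}. The algebraic structure is identical, so we obtain the standard boundary term $E_p[T\psi_0](u_2)-E_p[T\psi_0](u_1)$, the coercive bulk term $\int_{u_1}^{u_2} E_{p-1}[T\psi_0](u) du$, an $\ep$-small potential contribution handled exactly as in \eqref{energy.ineq}--\eqref{energy.ineq2} (yielding a term $\ep[3-p]^{-1}[\int E_p[T\psi_0]]^{1/2}[\tilde{E}_{p-1}[T\phi_0](u_1)]^{1/2}$ after Proposition~\ref{Hardy.prop}), a boundary contribution along $\gamma_R$ absorbed via an averaging argument as in Lemma~\ref{lemma.average} (this is where \eqref{ILED.T}, \eqref{boundedness.T} get used to replace the role of \eqref{ILED}, \eqref{boundedness}), and finally a new source error
\begin{equation}
\ep \int_{\DD(u_1,u_2)} r^p |\rd_v T\psi_0| \, |F_0|\, du\, dv\, d\omega.
\end{equation}

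The essential new step is to bound this source contribution. By \eqref{F0.est} and Cauchy--Schwarz,
\begin{equation}
\ep \int_{\DD(u_1,u_2)} r^p |\rd_v T\psi_0| |F_0| \lesssim \ep \left[\int_{u_1}^{u_2} E_p[T\psi_0](u) du\right]^{1/2} \left[\int_{\DD(u_1,\infty)} r^{p-6}\bigl(|\psi_0|^2+|\rd_u \psi_0|^2+r^2|\rd_v \psi_0|^2\bigr)\right]^{1/2}.
\end{equation}
Applying Corollary~\ref{cor.high} with $q = p < 3$ and a small $\eta_0 > 0$ controls the second factor by $u_1^{(-p_{high}+2\delta(\ep)+\eta_0)/2} \lesssim u_1^{-3/2 + O(\sqrt{\ep})}$. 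Combining with the other estimates, for some $p_T(\ep)$ with $p_T=3-O(\sqrt{\ep})$ we obtain a closed inequality of the form (schematically)
\begin{equation}
E_{p_T}[T\psi_0](u_2)+\int_{u_1}^{u_2}E_{p_T-1}[T\psi_0](u)du \lesssim E_{p_T}[T\psi_0](u_1)+O(\sqrt{\ep})\bigl[\textstyle\int_{u_1}^{u_2}E_{p_T}[T\psi_0]\,du\bigr]^{1/2}\bigl[\tilde{E}_{p_T-1}[T\phi_0](u_1)\bigr]^{1/2}+E[T\phi](u_1)+\ep u_1^{-3+O(\sqrt{\ep})}.
\end{equation}

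From here, the plan is to run verbatim the bootstrap/Grönwall scheme from Proposition~\ref{prop.high} with induction hypotheses analogous to \eqref{I}--\eqref{II}, namely $\tilde{E}_{p_T-1}[T\phi_0](u_n)\lesssim u_n^{-3+\eta_T(\ep)}$ and $\tilde{E}_{p_T}[T\phi_0](u_n)\lesssim u_n^{-2+\eta_T(\ep)}$, exploiting Lemma~\ref{calc.lemma} and the elementary Lambert-type inequality $F^{-1}(y)\leq 2\sqrt{y}(1+\sqrt{y})$ to close the recursion. The novelty compared to Proposition~\ref{prop.high} is that the induction is seeded not by the data at $u_0$ but by the dyadic bound \eqref{T.est.dyadic} at $u_n^{*}$ from Proposition~\ref{T.prelim.prop}; Corollary~\ref{low.cor.decay} and \eqref{boundedness.T} ensure that between consecutive $u_n^{*}$ the weighted energy cannot grow too fast. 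Once \eqref{T.est} is obtained for all $u\geq u_0$, estimate \eqref{T.est2} for $0\leq q\leq p_T-1$ follows by direct application of Lemma~\ref{lemma.hierarchy}, noting that $p_T-1\in(1,2)$ so the hypothesis on the range of $q$ in that lemma is satisfied.

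The main obstacle I expect is the careful bookkeeping of the inhomogeneous source term $\ep F_0$ inside the Grönwall loop: it must be compatible with the induction hypotheses and with the smallness threshold $D\cdot \eta(\ep)\ll 1$ driving the bootstrap. In particular, one must verify that the $u_1^{-3+O(\sqrt{\ep})}$ contribution from Corollary~\ref{cor.high} is \emph{strictly faster} than the target rate $u^{-2+\eta_T(\ep)}$, so it can be absorbed as a negligible forcing. A secondary technical point is that \eqref{boundedness.T}, \eqref{ILED.T} are stated with an extra right-hand-side term $u_1^{-2+\eta'(\ep)}E[\phi](u_1)$: combined with \eqref{energy.est}, this gives $O(u_1^{-5+O(\sqrt{\ep})})$, which is again strictly better than $u_1^{-2+\eta_T(\ep)}$, so it does not spoil the induction.
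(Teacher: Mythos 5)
Your proposal assembles the correct ingredients, and they are the same ones the paper uses: the multiplier $r^{p}\rd_v T\psi_0$ applied to \eqref{Tpsi.wave}; Cauchy--Schwarz on the source $\ep F_0$ together with \eqref{F0.est} and Corollary~\ref{cor.high} to produce the $u_n^{-3+O(\sqrt{\ep})}$ forcing; \eqref{boundedness.T}, \eqref{ILED.T} in place of \eqref{boundedness}, \eqref{ILED} for the boundary terms along $\gamma_R$; seeding by Proposition~\ref{T.prelim.prop}; and the final interpolation to \eqref{T.est2} via Lemma~\ref{lemma.hierarchy}. Your two sanity checks (that the $F_0$ contribution and the $u_1^{-2+\eta'}E[\phi](u_1)$ terms decay strictly faster than the target) are also correct and do appear in the paper.

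The one place where your plan diverges, and where it would fail if executed literally, is the claim that one can ``run verbatim the bootstrap/Grönwall scheme from Proposition~\ref{prop.high}'' with the decaying induction hypotheses $\tilde{E}_{p_T}[T\phi_0](u_n)\ls u_n^{-2+\eta_T}$ and $\tilde{E}_{p_T-1}[T\phi_0](u_n)\ls u_n^{-3+\eta_T}$. The Grönwall/Lambert step produces a multiplicative loss $1+O(\Delta\eta(\ep))$ per dyadic interval, and in \eqref{lambert3} this loss is absorbed only because the target $u^{2\delta}$ is \emph{increasing}: the paper chooses $\delta$ so that $2^{2\delta}=[1+2\Delta\eta(\ep)]^2$. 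With a decaying target the ratio of the allowed bounds at consecutive dyadic times is $2^{-2+2\delta}<1$, so the per-step loss compounds geometrically and the induction cannot close. The paper therefore does \emph{not} rerun the Grönwall argument here: it uses \eqref{T.est.dyadic.int} to bound $\bigl[\int_{u_1}^{u_2}E_{p_T}[T\psi_0]\,du\bigr]^{1/2}\ls u_n^{-1/2+\eta_T}$ directly --- precisely the quantity you would be Grönwalling --- and takes $u_1=u_n^{*}$ so that \eqref{T.est.dyadic} controls the starting datum on each dyadic interval separately, with no information propagated between intervals. Your remark that the induction is ``seeded at each $u_n^{*}$'' contains the correct fix, but once each interval is handled independently the Grönwall layer is redundant. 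A second detail you should not skip: the factor $\bigl[\tilde{E}_{p_T-1}[T\phi_0](u_1)\bigr]^{1/2}$ in your closed inequality is not supplied by \eqref{T.est.dyadic}, which controls $\tilde{E}_{p_T}$ but not $\tilde{E}_{p_T-1}$. The paper leaves $E_{p_T-1}[T\psi_0](u_n^{*})$ as an unknown, extracts a point $u_n^{**}$ by the mean-value theorem from the bulk term, upgrades to a supremum over $[u_n^{*},u_{n+1}]$ via Corollary~\ref{energy.low.cor}, and absorbs the unknown by Young's inequality to obtain $E_{p_T-1}[T\psi_0](u)\ls u^{-3+2\eta_T}$ for all $u$, from which \eqref{T.est} and \eqref{T.est2} follow.
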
\begin{proof} Take $(u_n)_{n \in \mathbb{N}}$, a dyadic sequence and $u_n \leq u_1 < u_2 \leq u_{n+1}$.We will start to show that the analogue of Proposition~\ref{lower.prop} holds for $T\phi_0$. We proceed as in the proof of Proposition~\ref{lower.prop}, take $0\leq q \leq p_{low}$ 	 and  apply the multiplier $r^q T\psi_0$ to get,  as in Corollary~\ref{energy.low.cor} \begin{equation}\label{imp1}
\int_{u_1}^{u_2} E_{q-1}[T\psi_0](u) du+	\tilde{E}_q[T\psi_0](u_2)\lesssim 	\tilde{E}_q[T\psi_0](u_1)+ \ep^2 \int_{\DD(u_1,u_2)} r^{q+1} |F_0|^2+ u^{-5+O(\sqrt{\ep})},
	\end{equation}  where we have also used \eqref{boundedness.T} and Proposition~\ref{prop.high}.
Then, as in the proof of Proposition~\ref{prop.high}, we apply the multiplier $r^p T\psi_0$ with $p=p_T$, and integrate to get, using \eqref{imp1} \begin{equation}
\int_{u_1}^{u_2} E_{p-1}[T\psi_0](u) du+	E_p[T\psi_0](u_2)\lesssim 	E_p[T\psi_0](u_1)+ \ep  (E_{p-1}[T\psi_0](u_1)+ \int_{\DD(u_1,u_2)} r^p |F_0|^2)^{\frac{1}{2}}\left[\int_{u_1}^{u_2} E_p[T\psi_0](u) du\right]^{\frac{1}{2}}.
\end{equation}
Now, we can combine \eqref{F0.est} and Corollary~\ref{cor.high} with $\eta_0=O(\sqrt{\ep})$, to get \begin{equation}
	\int_{u_1}^{u_2} E_{p-1}[T\psi_0](u) du+	E_p[T\psi_0](u_2)\lesssim 	E_p[T\psi_0](u_1)+ \ep  (E_{p-1}[T\psi_0](u_1)+ u_n^{-3+O(\sqrt{\ep})})^{\frac{1}{2}}\left[\int_{u_1}^{u_2} E_p[T\psi_0](u) du\right]^{\frac{1}{2}}.
\end{equation}
		By \eqref{T.est.dyadic.int} in Proposition~\ref{T.prelim.prop}, we have \begin{equation}
			\int_{u_1}^{u_2} E_{p-1}[T\psi_0](u) du+	E_p[T\psi_0](u_2)\lesssim 	E_p[T\psi_0](u_1)+ \ep  (E_{p-1}[T\psi_0](u_1)+u_n^{-3+O(\sqrt{\ep})})^{\frac{1}{2}} u_n^{-\frac{1}{2}+ \eta_T(\ep)}.
		\end{equation} Then, take $u_1=u_{n}^{*}$ so that \eqref{T.est.dyadic} in Proposition~\ref{T.prelim.prop} is valid, thus for all $u_2 \in [u_n^{*},u_{n+1}]$\begin{equation}
		\int_{u_n^{*}}^{u_2} E_{p-1}[T\psi_0](u) du+	E_p[T\psi_0](u_2)\lesssim 	u_n^{-2+\eta_T}+ \ep  (E_{p-1}[T\psi_0](u_n^{*}))^{\frac{1}{2}} u_n^{-\frac{1}{2}+ \eta_T(\ep)}.
		\end{equation} Now, by the mean-value theorem again, there exists $u_n^{**} \in [u_n^{*},u_{n+1}]$ such that 
		\begin{equation}
		 E_{p-1}[T\psi_0](u_n^{**}) \lesssim 	u_n^{-3+\eta_T}+ \ep  (E_{p-1}[T\psi_0](u_n^{*}))^{\frac{1}{2}} u_n^{-\frac{1}{2}+ \eta_T(\ep)}.
		\end{equation}  But then, by the boundedness result of Corollary~\ref{energy.low.cor}, we in fact have 
			\begin{equation}
		\sup_{ u\in [u_n^{*},u_{n+1}]}	E_{p-1}[T\psi_0](u) \lesssim 	u_n^{-3+\eta_T}+ \ep  (E_{p-1}[T\psi_0](u_n^{*}))^{\frac{1}{2}} u_n^{-\frac{3}{2}+ \eta_T(\ep)},
		\end{equation} hence, by the Cauchy-Schwarz inequality 
			\begin{equation}
			\sup_{ u\in [u_n^{*},u_{n+1}]}	E_{p-1}[T\psi_0](u) \lesssim 	u_n^{-3+2\eta_T}.
		\end{equation} Hence,  for all $u\geq u_0$
			\begin{equation}
			E_{p-1}[T\psi_0](u) \lesssim 	u^{-3+2\eta_T}.
		\end{equation} Finally we conclude with the proof of \eqref{T.est2}, using Lemma~\ref{lemma.hierarchy}.

	\end{proof}
%We note that \eqref{energy.est.T0} in Theorem~\ref{main.thm} is proven as a consequence of Proposition~\ref{T.prop}. %The proof of \eqref{pointwise.est3} is obtained in the following corollary. 

					\subsection{Finishing the proof of point-wise decay}
				We now turn to the completion of the proof of Theorem~\ref{main.thm}, where \eqref{pointwise.est} remains the only estimate to be established. We will assume the extra assumptions \eqref{boundedness.T}, \eqref{ILED.T}, \eqref{H4} hold throughout this section. First, the point-wise decay of $\phi_{\geq 1}$ is obtained by Corollary~\ref{cor.pointwise.ang} so in the rest of the proof we focus on $\phi_0$, the spherical average.	 We start by establishing the optimal point-wise decay result of $\phi_0$ on the constant $r$-curve $r=R$.		
 
 \begin{lemma}\label{lemma.phi0.sharp.R} The following estimate holds:
 \begin{equation}\label{phi0.sharp.R}
	|\phi_0|(u,v_R(u))\lesssim  u^{-2+O(\sqrt{\ep})}.
\end{equation}
 \end{lemma}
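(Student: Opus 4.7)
The plan is to exploit the fundamental theorem of calculus along the constant-$r$ curve $\gamma_R$, combined with the sharp integrated decay of $T\phi_0$ already established. Since $v_R(u)=u+R$ has slope one, we have $\frac{d}{du}[\phi_0(u,v_R(u))]=(T\phi_0)(u,v_R(u))$. The crude point-wise bound from Corollary~\ref{cor.rad.pointwise} gives $|\phi_0|(u,v_R(u))\ls u^{(1-p_{high})/2+\delta/2}\to 0$ as $u\to+\infty$, so
\begin{equation*}
\phi_0(u,v_R(u))=-\int_u^{+\infty} (T\phi_0)(u',v_R(u'))\,du',
\end{equation*}
and the task reduces to establishing the bound $\int_u^{+\infty}|(T\phi_0)(u',v_R(u'))|\,du'\ls u^{-2+O(\sqrt{\ep})}$.

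First I would apply the $T$-commuted averaging argument of Lemma~\ref{lemma.average}, which in the present setting (using \eqref{ILED.T} in place of \eqref{ILED}) yields, for any $u_1<u_2$,
\begin{equation*}
\int_{u_1}^{u_2}|T\phi_0|^2(u',v_R(u'))\,du' \ls E[T\phi_0](u_1)+u_1^{-2+\eta'(\ep)}E[\phi_0](u_1).
\end{equation*}
By Proposition~\ref{T.prop} (with $q=0$), $E[T\phi_0](u_1)\ls u_1^{-2-p_T+\eta_T(\ep)}=u_1^{-5+O(\sqrt{\ep})}$, while Proposition~\ref{prop.high} together with \eqref{boundedness} gives $E[\phi_0](u_1)\ls u_1^{-p_{high}+2\delta}=u_1^{-3+O(\sqrt{\ep})}$. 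Both contributions are therefore bounded by $u_1^{-5+O(\sqrt{\ep})}$.

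Second, I would use Cauchy--Schwarz on a dyadic interval $[u,2u]$:
\begin{equation*}
\int_u^{2u}|T\phi_0|(u',v_R(u'))\,du' \leq u^{1/2}\left(\int_u^{2u}|T\phi_0|^2(u',v_R(u'))\,du'\right)^{1/2} \ls u^{1/2}\cdot u^{-5/2+O(\sqrt{\ep})}=u^{-2+O(\sqrt{\ep})}.
\end{equation*}
Summing the geometric series over the dyadic decomposition of $[u,+\infty)$ (valid since the exponent is strictly negative for $\ep$ small enough) gives $\int_u^{+\infty}|T\phi_0|(u',v_R(u'))\,du' \ls u^{-2+O(\sqrt{\ep})}$, which, plugged into the fundamental-theorem-of-calculus representation above, concludes the proof.

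No step should present a serious obstacle, as all ingredients are already in place; the only subtle point is the use of the integrated $L^2_u$ control of $T\phi_0$ on $\gamma_R$ together with the Cauchy--Schwarz trick (rather than a pointwise bound on $T\phi_0$), which is essential to transform the $L^2$ decay rate $u^{-5/2+O(\sqrt{\ep})}$ into the desired integrable rate $u^{-2+O(\sqrt{\ep})}$. A direct pointwise bound via Proposition~\ref{energy.decay.prop} applied to $T\phi_0$ would yield only $u^{-5/2+O(\sqrt{\ep})}$ pointwise, whose integral gives $u^{-3/2+O(\sqrt{\ep})}$, which is insufficient — thus the $L^2$-in-$u$ approach is crucial.
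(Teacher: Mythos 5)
Your proof is correct and follows essentially the same route as the paper: both rest on the fundamental theorem of calculus along $\gamma_R$ combined with the $L^2_u$ integrated decay of $T\phi_0$ on $\gamma_R$ (obtained from \eqref{ILED.T} via the averaging argument of Lemma~\ref{lemma.average}) and the rates $E[T\phi_0](u)\ls u^{-5+O(\sqrt{\ep})}$, $E[\phi_0](u)\ls u^{-3+O(\sqrt{\ep})}$. The only cosmetic difference is that the paper writes $|\phi_0|^2(u,v_R(u))=-2\int_u^{+\infty}\phi_0\, T\phi_0$ and applies a single global Cauchy--Schwarz using both $L^2_u$ bounds, whereas you integrate $T\phi_0$ directly and recover integrability by Cauchy--Schwarz on dyadic blocks; both yield $u^{-2+O(\sqrt{\ep})}$.
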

 
 \begin{proof}
We use \eqref{ILED.T} repeating the argument of Lemma~\ref{lemma.average} we find that, combining with Proposition~\ref{T.prop} and Proposition~\ref{prop.high} \begin{equation}\begin{split}&	\int_u^{+\infty} |\phi_0|^2(u',v_R(u')) du' \lesssim  E[\phi_0](u) \lesssim u^{-p_{high} + 2\delta(\ep)}\lesssim u^{-3 + O(\sqrt{\ep})},\\  &	\int_u^{+\infty} |T\phi_0|^2(u',v_R(u')) du' \lesssim  E[T\phi_0](u) \lesssim u^{-2-p_T + \eta_T}\lesssim u^{-5 + O(\sqrt{\ep})}. \end{split}\end{equation}Now, note that\begin{equation}
	|\phi_0|^2(u,v_R(u))\lesssim \bigl| \int_{u}^{+\infty} \phi_0 \cdot T\phi_0(u',v_R(u')) du'\bigr|\ls \left[	\int_u^{+\infty} |\phi_0|^2(u',v_R(u')) du'\right]^{\frac{1}{2}} \left[	\int_u^{+\infty} |T\phi_0|^2(u',v_R(u')) du'\right]^{\frac{1}{2}},
\end{equation} which gives \eqref{phi0.sharp.R}.
 \end{proof}
 Next, we prove similar bounds for $\rd \phi$ on the constant $r$ curve $r=R$.

 % We start with a lemma providing (non-sharp but useful in the sequel) point-wise decay estimate on $\rd\phi_0$ on the constant $r$-curve $r=R$.
 
 \begin{lemma}\label{lemma.dphi.pointwise.notsharp} The following estimate holds:
\begin{equation}\label{dphi.pointwise.notsharp}
\sum_{|\alpha|= 1}	|\rd_{\alpha}\phi_0|^2(u,v_R(u))\lesssim  u^{-4+O(\sqrt{\ep})}.
\end{equation} Moreover, there exists $C>0$ independent of $\ep$, such that for all $\eta\in (-1,1)$ such that $|\eta|> C \ep$, \begin{equation} \label{dphi.pointwise.notsharp2}
\int_{v_R(u)}^{+\infty}\left( r^{-1-\eta}|\rd_v \psi_0|^2+ r^{1-\eta}|\rd^2_{v} \psi_0|^2 \right) dv' \lesssim   u^{-4+O(\sqrt{\ep})+\eta}.
\end{equation}
 \end{lemma}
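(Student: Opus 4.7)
The plan for \eqref{dphi.pointwise.notsharp} is to mimic the proof of Lemma~\ref{lemma.phi0.sharp.R}. Since $T$ commutes with $\rd_u$ and $\rd_v$, we have $T(\rd_\alpha\phi_0)=\rd_\alpha T\phi_0$, so the fundamental theorem of calculus along $\gamma_R$ gives
\[
|\rd_\alpha\phi_0|^2(u,v_R(u))\;\lesssim\;\Bigl|\int_u^{+\infty}\rd_\alpha\phi_0\cdot \rd_\alpha T\phi_0\,(u',v_R(u'))\,du'\Bigr|.
\]
Applying Cauchy--Schwarz in $u'$ and then invoking Lemma~\ref{lemma.average} (with $k=0$) twice, once for $\phi_0$ and once for $T\phi_0$, bounds the two factors by $[E[\phi_0](u)]^{1/2}$ and $[E[T\phi_0](u)]^{1/2}$ respectively. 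Proposition~\ref{prop.high} gives $E[\phi_0](u)\lesssim u^{-3+O(\sqrt{\ep})}$ and Proposition~\ref{T.prop} (with $q=0$) gives $E[T\phi_0](u)\lesssim u^{-5+O(\sqrt{\ep})}$; here the use of the $T$-commuted averaging requires the hypotheses \eqref{ILED.T} and \eqref{H4}. Multiplying produces the claimed rate $u^{-4+O(\sqrt{\ep})}$.

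For \eqref{dphi.pointwise.notsharp2} the governing identity is
\[
\rd_v^{\,2}\psi_0 \;=\; \rd_v T\psi_0 \;-\; \rd_u\rd_v\psi_0,\qquad |\rd_u\rd_v\psi_0|\;\lesssim\;\frac{\ep}{r^2}\bigl(|\psi_0|+|\rd_u\psi_0|+r|\rd_v\psi_0|\bigr),
\]
the second inequality following from \eqref{psi.eq} and \eqref{H2}. The principal term $\int_{C_u} r^{1-\eta}|\rd_v T\psi_0|^2 = E_{1-\eta}[T\psi_0](u)$ is controlled by Proposition~\ref{T.prop} applied with $q=1-\eta$, which lies in $[0,p_T-1]$ for the admissible range of $\eta$; this contributes the desired $u^{-4+O(\sqrt{\ep})+\eta}$ rate (in fact a stronger one when $\eta>0$). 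The error terms coming from $|\rd_u\rd_v\psi_0|^2$ yield $\ep^2$-prefactored weighted integrals of $|\psi_0|^2$, $|\rd_u\psi_0|^2$ and $|\rd_v\psi_0|^2$; these are recast by means of the Hardy inequality \eqref{Hardy5} (for $|\psi_0|^2$) and the ingoing Hardy inequality \eqref{Hardy.in} adapted to a single $C_u$ (for $|\rd_u\psi_0|^2$), reducing them to $r$-weighted norms of $\rd_v\psi_0$ plus boundary contributions on $\gamma_R$, the latter handled by \eqref{dphi.pointwise.notsharp} and Lemma~\ref{lemma.phi0.sharp.R}.

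The first term $\int r^{-1-\eta}|\rd_v\psi_0|^2$ is then deduced from the second by the Hardy inequality \eqref{Hardy5} applied to $f=\rd_v\psi_0$ with $q=2-\eta$: the decay $r^{-\eta}\rd_v\psi_0\to 0$ as $v\to\infty$ is guaranteed by the existence of the radiation field, and the resulting constant $\eta^{-2}$ in front of $\int r^{1-\eta}|\rd_v^2\psi_0|^2$ is finite precisely because $|\eta|>C\ep$. Combined with the boundary term $R^{-\eta}|\rd_v\psi_0|^2(u,v_R(u))\lesssim u^{-4+O(\sqrt{\ep})}$ from \eqref{dphi.pointwise.notsharp}, this closes the estimate.

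The main obstacle is the circular structure of the system: the bound on $\int r^{1-\eta}|\rd_v^2\psi_0|^2$ generates, via the $\ep$-error in $\rd_u\rd_v\psi_0$, precisely a term of the form $\ep^2\int r^{-1-\eta}|\rd_v\psi_0|^2$, while the reduction via Hardy goes in the opposite direction with a constant $\eta^{-2}$. The product $\ep^2\eta^{-2}$ is small exactly under the hypothesis $|\eta|>C\ep$ with $C$ large enough independently of $\ep$, which allows one to absorb the cross terms on the left-hand side and simultaneously close both estimates.
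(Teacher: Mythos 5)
There is a genuine gap in your proof of the first estimate \eqref{dphi.pointwise.notsharp}. Your argument writes $|\rd_\alpha\phi_0|^2(u,v_R(u))\lesssim\bigl|\int_u^{+\infty}\rd_\alpha\phi_0\cdot\rd_\alpha T\phi_0\,(u',v_R(u'))\,du'\bigr|$ and, after Cauchy--Schwarz, needs the averaged bound $\int_u^{+\infty}|\rd_\alpha T\phi_0|^2(u',v_R(u'))\,du'\lesssim E[T\phi_0](u)+\cdots$ for $|\alpha|=1$. This does not follow from the hypotheses: Lemma~\ref{lemma.average} is proved by applying \eqref{ILED} to solutions of \eqref{wave.main}, and $T\phi$ is not such a solution (it satisfies the inhomogeneous equation \eqref{Tpsi.wave} unless the coefficients are stationary); the designated substitute \eqref{ILED.T} controls only the zeroth-order quantity $|T\phi|^2/(1+r)^{\sigma'}$ and contains no $r^2|\rd_u T\phi|^2+r^2|\rd_v T\phi|^2$ terms. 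So the second factor in your Cauchy--Schwarz is uncontrolled. This matters most for $\alpha=v$: the paper never obtains $|\rd_v\psi_0|^2(u,v_R(u))$ this way. It first proves the weighted integral bounds of \eqref{dphi.pointwise.notsharp2} \emph{with} the unknown boundary term $|\rd_v\psi_0|^2(u,v_R(u))$ carried along on the right-hand side (with small prefactors $\ep^2$ and $\ep^2\eta^{-2}$), and only then closes that boundary term by integrating $\rd_v\bigl(|\rd_v\psi_0|^2\bigr)$ from $v=+\infty$ (where $\rd_v\psi_0\to 0$), applying Cauchy--Schwarz with weights $r^{\mp(1+\eta')}$, $\eta'=O(\sqrt{\ep})$, and absorbing. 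Your proposal inverts this logical order --- it feeds \eqref{dphi.pointwise.notsharp} into the boundary terms of \eqref{dphi.pointwise.notsharp2} --- so once the first part fails, the second does not close either. (For $\alpha=u$ there is an easy repair, which is what the paper does: $|\rd_u\phi_0|\leq|T\phi_0|+|\rd_v\phi_0|$, and $|T\phi_0|(u,v_R(u))$ follows pointwise from Proposition~\ref{energy.decay.prop} applied to $T\phi_0$ together with Proposition~\ref{T.prop}; but this still leaves the $\rd_v\phi_0$ contribution open.)

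Your treatment of \eqref{dphi.pointwise.notsharp2} is otherwise essentially the paper's: the identity $\rd_v^2\psi_0=\rd_v T\psi_0-F$ from \eqref{T.psi.eq}, the control of $\int_{C_u}r^{1-\eta}|\rd_v T\psi_0|^2$ via Proposition~\ref{T.prop}, the passage from $r^{1-\eta}|\rd_v^2\psi_0|^2$ to $r^{-1-\eta}|\rd_v\psi_0|^2$ by Hardy with constant $\eta^{-2}$, and the closing of the loop through the smallness of $\ep^2\eta^{-2}$ are all exactly the mechanism of the paper, and your description of the circularity is accurate. Two smaller points: \eqref{Hardy.in} is a bulk inequality over $\DD(u_1,u_2)$ whose proof integrates by parts in $u$, so it does not localize to a single cone $C_u$; on a fixed cone the paper instead applies \eqref{Hardy5} to $f=\rd_u\psi_0$ and uses the equation $\rd_v\rd_u\psi_0=F$ to convert the resulting derivative term. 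Also, a direct application of Proposition~\ref{T.prop} with $q=1-\eta$ gives the exponent $-4-\eta+O(\sqrt{\ep})$, which for $\eta<0$ equals $-4+|\eta|+O(\sqrt{\ep})$ and is therefore \emph{weaker}, not stronger, than the displayed rate in that regime (this matches the paper's $[-\eta]_+$ bookkeeping and is harmless in the final application, where $\eta=\pm O(\sqrt{\ep})$).
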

 
 \begin{proof}
 	We take $\eta$ with $0<|\eta|\leq 1$, and write, invoking Proposition~\ref{Hardy.prop} (Hardy inequality) \begin{equation}\label{int}\begin{split}
 	&	\int_{v_R(u)}^{+\infty} r^{-1-\eta} |\rd_v \psi_0|^2(u,v') dv' \lesssim \eta^{-1} \psiR+  \eta^{-2}	\int_{v_R(u)}^{+\infty} r^{1-\eta} |\rd_{v}^2 \psi_0|^2(u,v') dv'\\ & \lesssim |\eta|^{-1}\left( u^{-4+O(\sqrt{\ep})}+	\int_{v_R(u)}^{+\infty} r^{1-\eta} |\rd_{v}^2 \psi_0|^2(u,v') dv'\right) .
 		\end{split}
 	\end{equation}
 	 Next, we write, using \eqref{T.psi.eq} \begin{equation}
 		\int_{v_R(u)}^{+\infty}  r^{1-\eta}|\rd_{v}^2 \psi_0|^2(u,v' ) dv'\lesssim \int_{v_R(u)}^{+\infty}  r^{1-\eta}\left(| \rd_v T \psi_0|^2+\ep^2 r^{-4}(| \psi_0|^2 +| \rd_u\psi_0|^2+r^2| \rd_v\psi_0|^2\right)(u,v' ) dv'.
 	\end{equation}  Then, applying Proposition~\ref{Hardy.prop} (Hardy inequality) to the  $|\psi_0|^2 $ and $| \rd_u\psi_0|^2$ terms,  we end up with 
 	\begin{equation}
 		\int_{v_R(u)}^{+\infty}  r^{1-\eta}|\rd_{v}^2 \psi_0|^2(u,v' ) dv'\lesssim \int_{v_R(u)}^{+\infty}  \left(r^{1-\eta}| \rd_v T \psi_0|^2+\ep^2 r^{-1-\eta}| \rd_v\psi_0|^2\right)(u,v' ) dv' + \ep^2 \left( \psiR+ \dpsiR\right).
 	\end{equation}
 	Now, we apply Proposition~\ref{T.prop} and Lemma~\ref{lemma.phi0.sharp.R} to obtain (denoting $x_+$ the positive part of $x$) 	\begin{equation}\label{int2}
 		\int_{v_R(u)}^{+\infty}  r^{1-\eta}|\rd_{v}^2 \psi_0|^2(u,v' ) dv'\lesssim  u^{-4+[-\eta]_+ +O(\sqrt{\ep})} +\ep^2\int_{v_R(u)}^{+\infty} r^{-1-\eta}| \rd_v\psi_0|^2(u,v' ) dv' + \ep^2   \dpsiR.
 	\end{equation}
 	Combining \eqref{int} and \eqref{int2} gives
 	\begin{equation}\label{int3}
 		\int_{v_R(u)}^{+\infty} r^{-1-\eta} |\rd_v \psi_0|^2(u,v') dv'\lesssim  u^{-4+[-\eta]_++O(\sqrt{\ep})} +\eta^{-2}\ep^2\int_{v_R(u)}^{+\infty} r^{-1-\eta}| \rd_v\psi_0|^2(u,v' ) dv' + \eta^{-2}\ep^2   \dpsiR.
 	\end{equation} 
 	Now, choosing $\eta^{-2} \ep^2=\delta$ small enough (this requires the claimed condition on $\eta$ that $|\eta| > C \ep$), we get 	\begin{equation}\label{int4}
 		\int_{v_R(u)}^{+\infty} r^{-1-\eta} |\rd_v \psi_0|^2(u,v') dv'\lesssim  u^{-4+[-\eta]_++O(\sqrt{\ep})} +\delta   \dpsiR,
 	\end{equation} as well as 
 	\begin{equation}\label{int5}
 		\int_{v_R(u)}^{+\infty} r^{1-\eta} |\rd_v \psi_0|^2(u,v') dv'\lesssim  u^{-4+[-\eta]_++O(\sqrt{\ep})} +\ep^2   \dpsiR.
 		\end{equation}
 	
 %	Now, writing $| \rd_u\psi_0|^2 \lesssim |\rd_v \psi_0|^2 + |T\psi_0|^2$, we   obtain \begin{equation} 		\int_{v_R(u)}^{+\infty}   r^{1-\eta}|\rd_v^2 \psi_0|^2(u,v' ) dv'\lesssim \int_{v_R(u)}^{+\infty}   r^{1-\eta}\left(| \rd_v T \psi_0|^2+\ep^2 r^{-4}(| \psi_0|^2 +| T\psi_0|^2+r^2| \rd_v\psi_0|^2\right)(u,v' ) dv'. 	\end{equation}  

 To close the estimate, we  	then try to bound $\dpsiR$. We write, for any $v= v_R(u)$, and taking advantage of the fact that $\underset{v\rightarrow +\infty}{\lim} \rd_v \psi_0(u,v)=0$ and Proposition~\ref{Hardy.prop} (Hardy inequality) and Lemma~\ref{lemma.phi0.sharp.R} \begin{equation}\label{int8}\begin{split}
 	&	\dpsiR\lesssim \int_{v_R(u)}^{+\infty} |\psi_0\ \rd_v \psi_0|(u,v' ) dv'\ls [\int_{v_R(u)}^{+\infty} r^{-1-\eta'} |\rd_v \psi_0|^2(u,v' ) dv']^{\frac{1}{2}}[\int_{v_R(u)}^{+\infty}  r^{1+\eta'} |\rd_v^2 \psi_0|^2(u,v' ) dv']^{\frac{1}{2}}\\ &\ls  [u^{-4+|\eta'|+O(\sqrt{\ep})} +\delta   \dpsiR]^{\frac{1}{2}} [u^{-4+|\eta'|+O(\sqrt{\ep})} +\ep^2   \dpsiR]^{\frac{1}{2}}\\ & \ls u^{-4+O(\sqrt{\ep})} +\delta   \dpsiR,
 		\end{split}
 	\end{equation} where in the last two lines, we have applied \eqref{int4} with $\eta = \eta'$  and \eqref{int5} with $\eta = -\eta'$, respectively and chosen $\eta'=O(\sqrt{\ep})$. Choosing $\eta$ so that $\delta$ is sufficiently small, we have established \eqref{dphi.pointwise.notsharp}  with $\alpha=v$, i.e., \begin{equation}\label{int6}
\dpsiR\lesssim  u^{-4+O(\sqrt{\ep})},
 	\end{equation}  and thus, combining with \eqref{int4}, \eqref{int5} also gives  \eqref{dphi.pointwise.notsharp2}. To show \eqref{dphi.pointwise.notsharp}  with $\alpha=u$,
 	% \begin{equation} 	 	\int_v^{+\infty}  |\rd_v^2 \psi_0|^2(u,v' ) dv'\lesssim \int_v^{+\infty}  \left(| \rd_v T \psi_0|^2+\ep^2 | \rd_{v v}^2\psi_0|^2\right)(u,v' ) dv'+ C_R \left(|\phi_0|^2(u,v) + |T\phi_0|^2(u,v)+|\rd_v\psi_0|^2(u,v) \right). 	 \end{equation} 
we invoke  Proposition~\ref{energy.decay.prop} applied to $T\phi_0$ and Proposition~\ref{T.prop} to obtain \begin{equation}\label{Tphi.pointwise} 	 |T\phi_0|^2(u,v_R(u)) \lesssim u^{-4+O(\sqrt{\ep})},	 \end{equation} which gives \eqref{dphi.pointwise.notsharp}  with $\alpha=u$ after combining with and noticing that $|\rd_u \phi_0|\ls |\rd_v \phi_0|+|T \phi_0|$.
\end{proof}

 The proof of the following proposition will conclude the proof of \eqref{pointwise.est2}, in view of the above remarks. \begin{prop}
 	The following estimate holds for all $u\geq u_0$, $v\geq v_R(u)$ \begin{equation}\label{pointwise.est2zero}
|\phi_0|(u,v) \lesssim u^{-2+ O(\sqrt{\ep})}.
 	\end{equation}
 \end{prop}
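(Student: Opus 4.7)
The plan is to integrate $\rd_v\psi_0$ outward along the null cone $C_u$, starting from the curve $\gammaR$, and to combine the sharp boundary decay of Lemma~\ref{lemma.phi0.sharp.R} with the weighted outgoing integral estimate \eqref{dphi.pointwise.notsharp2} of Lemma~\ref{lemma.dphi.pointwise.notsharp}. The key technical point is to choose the $r$-weights in the Cauchy--Schwarz step so that, after dividing by $r$ to recover $\phi_0$, the resulting bound is uniform on the unbounded region $\{v\geq v_R(u)\}$.

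Concretely, I would fix $u\geq u_0$ and $v\geq v_R(u)$, and pick $\eta'$ with $C\ep<\eta'=O(\sqrt{\ep})$ so that Lemma~\ref{lemma.dphi.pointwise.notsharp} is applicable with $\eta=-\eta'$. The fundamental theorem of calculus along $C_u$ yields
\[
|\psi_0|(u,v) \leq |\psi_0|(u,v_R(u)) + \int_{v_R(u)}^v |\rd_v\psi_0|(u,v')\,dv',
\]
where the boundary term is bounded by $R|\phi_0|(u,v_R(u))\lesssim u^{-2+O(\sqrt{\ep})}$ via Lemma~\ref{lemma.phi0.sharp.R}. For the bulk integral I apply Cauchy--Schwarz with the splitting $r^{(-1+\eta')/2}\cdot r^{(1-\eta')/2}$, bounding the first factor by $u^{-2+O(\sqrt{\ep})-\eta'/2}$ via \eqref{dphi.pointwise.notsharp2} with $\eta=-\eta'$, and the second factor by a constant multiple of $r(u,v)^{1-\eta'/2}$ after using $r(u,v')\lesssim v'-u$ from \eqref{H0}. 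Dividing the resulting bound on $|\psi_0|(u,v)$ by $r(u,v)\geq R$ makes both leftover $r$-factors ($r^{-1}$ and $r^{-\eta'/2}$) bounded by constants, which delivers \eqref{pointwise.est2zero}.

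The sole subtle step is the \emph{sign} of the weight chosen in the Cauchy--Schwarz inequality. Using a positive $\eta$ (the strong-$u$-decay end of \eqref{dphi.pointwise.notsharp2}) would leave a factor $r^{\eta/2}$ after dividing by $r$, which grows unboundedly as $v\to+\infty$ and could not be absorbed uniformly in $v$; using the negative-$\eta$ end instead produces the decaying factor $r^{-\eta'/2}$, bounded by $R^{-\eta'/2}$ on $\{v\geq v_R(u)\}$. The small loss in $u$-decay from choosing $\eta<0$ in \eqref{dphi.pointwise.notsharp2} is harmless, being absorbed into the overall $O(\sqrt{\ep})$ loss in the final exponent.
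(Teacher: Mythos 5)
Your proof is correct, and it leans on exactly the same two inputs as the paper (the boundary decay $|\phi_0|(u,v_R(u))\lesssim u^{-2+O(\sqrt{\ep})}$ of Lemma~\ref{lemma.phi0.sharp.R} and the weighted outgoing integrals \eqref{dphi.pointwise.notsharp2} of Lemma~\ref{lemma.dphi.pointwise.notsharp}), but the bookkeeping is genuinely different. The paper integrates $\rd_v\big(|\phi_0|^2\big)$ along $C_u$, which produces the bulk term $\int r^{-2}|\psi_0||\rd_v\psi_0|$; after Cauchy--Schwarz this forces control of $\int r^{-3-\eta}|\psi_0|^2$ and $\int r^{-1+\eta}|\rd_v\psi_0|^2$, each of which is converted by a further Hardy inequality into the quantities of Lemma~\ref{lemma.dphi.pointwise.notsharp}, so that \emph{both} halves of \eqref{dphi.pointwise.notsharp2} (including the $r^{1-\eta}|\rd_v^2\psi_0|^2$ integral) are needed, applied with $\eta=\pm O(\sqrt{\ep})$. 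You instead integrate $\rd_v\psi_0$ itself, use Cauchy--Schwarz against the explicit weight $\int_{v_R(u)}^{v} r^{1-\eta'}\lesssim r^{2-\eta'}(u,v)$, and cancel the resulting growth with the factor $r^{-1}$ in $\phi_0=r^{-1}\psi_0$; this needs only the $r^{-1+\eta'}|\rd_v\psi_0|^2$ integral from \eqref{dphi.pointwise.notsharp2} and no additional Hardy step, at the cost of a bound on $|\psi_0|(u,v)$ that is not uniform in $v$ before dividing by $r$. Your identification of the sign issue is exactly right: taking the weight $r^{-1-\eta'}$ instead would leave an unabsorbable $r^{\eta'/2}$ growth, and the $u^{\eta'}$ loss from the negative-$\eta$ end is harmless since $\eta'=O(\sqrt{\ep})$. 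Both routes yield \eqref{pointwise.est2zero}; yours is slightly more economical.
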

\begin{proof}

To complete the proof of \eqref{pointwise.est2zero}, we propagate \eqref{phi0.sharp.R} of Lemma~\ref{lemma.phi0.sharp.R} in the region where $v \geq v_R(u)$.

We start writing, using Proposition~\ref{Hardy.prop} (Hardy inequality),  Lemma~\ref{lemma.phi0.sharp.R} and Lemma~\ref{lemma.dphi.pointwise.notsharp}: for all $u\geq u_0$, $v\geq v_R(u)$, and any $\eta\in (0,1)$ \begin{equation}\begin{split}\label{sharp.pointwise.R}
&	|\phi_0|^2(u,v) \lesssim 	|\phi_0|^2(u,v_R(u)) +  \int_{v_R(u)}^{v}	r^{-2}|\psi_0||\rd_v\psi_0|(u,v')dv'\\ & \lesssim 	|\phi_0|^2(u,v_R(u)) + [ \int_{v_R(u)}^{v}	r^{-3-\eta}|\psi_0|^2(u,v')dv']^{\frac{1}{2}}[ \int_{v_R(u)}^{v}	r^{-1+\eta}|\rd_v \psi_0|^2(u,v')dv']^{\frac{1}{2}} \\ &\lesssim u^{-4+O(\sqrt{\ep})} +[ \int_{v_R(u)}^{+\infty}	r^{-1-\eta}|\rd_v \psi_0|^2(u,v')dv'+ u^{-4+O(\sqrt{\ep})}]^{\frac{1}{2}}[ \int_{v_R(u)}^{+\infty}	r^{1+\eta}|\rd_{v}^2 \psi_0|^2(u,v')dv'+u^{-4+O(\sqrt{\ep})}]^{\frac{1}{2}}\\ &   u^{-4+O(\sqrt{\ep})} +[ u^{-4+O(\sqrt{\ep})-\eta}+ u^{-4+O(\sqrt{\ep})}]^{\frac{1}{2}}[ u^{-4+O(\sqrt{\ep})+\eta}+u^{-4+O(\sqrt{\ep})}]^{\frac{1}{2}}\ls  u^{-4+O(\sqrt{\ep})},
\end{split}
\end{equation} where, in the last line, we have applied Lemma~\ref{lemma.dphi.pointwise.notsharp} with $\eta  = \pm O(\sqrt{\ep})$. This concludes the proof.

\end{proof}

%Then, we use the Cauchy-Schwarz inequality in $u$ and we write for $v\geq v_0$, $u\leq v-R$: \begin{equation}|\phi_0|^2(u,v)\lesssim  |\phi_0|^2(v-R,v)+ [1+r^{-1}] \int^{u}_{v-R} r^2\frac{ |\rd_u \phi_0|^2}{|\rd_u r |} \lesssim  v^{-2+ O(\sqrt{\ep})}[1+r^{-1}]^{\frac{1}{2}} +[E(\phi_0)(v-R)]^{\frac{1}{2}}\end{equation}

\bibliographystyle{alpha}
\bibliography{bibliography.bib}

\end{document}